\def\?[#1]{\textbf{[#1]}\marginpar{\Large{\textbf{??}}}}
\newtheorem{theo}{Theorem}
\newtheorem{prop}{Proposition}[section]
\newtheorem{lem}[prop]{Lemma}
\newtheorem{cor}[prop]{Corollary}
\theoremstyle{remark}
\newtheorem{rem}{Remark}
\numberwithin{equation}{section}
\DeclareMathOperator{\supp}{supp}
\newcommand{\DR}{\frac{d}{dr}}
\newcommand{\DDZ}{\frac{d^2}{dz^2}}
\newcommand{\DOM}{\mathcal{D}}
\newcommand{\TE}{\tilde{E}}
\renewcommand{\Im}{\operatorname{Im}}
\renewcommand{\Re}{\operatorname{Re}}
\title[Quasinormal modes for Schwarzschild--AdS black holes]
{Quasinormal modes for Schwarzschild--AdS black holes:
exponential convergence to the real axis.}
\author{Oran Gannot}
\email{ogannot@math.berkeley.edu}
\address{Department of Mathematics, Evans Hall, University of California,
Berkeley, CA 94720, USA}
\begin{document}

\begin{abstract}
We study quasinormal modes for massive scalar fields in Schwarzschild--anti-de Sitter black holes. When the mass-squared is above the Breitenlohner--Freedman bound we show that for large angular momenta, $\ell$, there exist quasinormal modes with imaginary parts of size $\exp(-\ell/C)$. We provide an asymptotic expansion for the real parts of the modes closest to the real axis and identify the vanishing of certain coefficients depending on the dimension.
\end{abstract}

\maketitle


\section{Introduction}
\label{int}

Quasinormal modes for Schwarzschild--AdS black holes are a subject of active study in current
physics literature -- see \cite{Berti:2009QNM},\cite{Konoplya:2011} and references given there. These modes are mathematically 
defined as poles of the Green function for the stationary problem and are a special 
case of {\em scattering resonances} --- see for example \cite{Zworski:1999}. 

Following established tradition we separate variables after which the inverse of 
the angular momentum, $ \ell$, becomes a semiclassical parameter $ h $. 
In this note we construct approximate solutions (quasimodes) to the stationary equation 
with errors of size $ \exp ( - C/h) $. We then apply a modified version of 
the results of Tang-Zworski \cite{Tang:1998} and Stefanov \cite{Stefanov:2005} to show the existence
of quasinormal modes. (The confusing nomenclature seems unavoidable when
following trends in the literature: quasimodes refer to approximate solutions
and quasinormal modes to the poles of the Green function, namely scattering resonances.) This passage from quasimodes to resonances does not depend on the reduction to one dimension, nor on the analyticity of the potential. Additionally, most of the auxiliary techniques used are suited for higher dimensional analysis. It is likely that a more
refined description of quasinormal modes (especially of the imaginary parts) is possible
using exact WKB methods \cite{Fujiie:2000},\cite{Ramond:1996}, and encouraging progress has been made in the physics
literature \cite{Festuccia:2008},\cite{Dias:2012},\cite{Grain:2006}. 

Quasinormal modes are defined using the meromorphic continuation of the Green function.
The existence of a meromorphic continuation follows from the general ``black box'' formalism in scattering
theory \cite{Sjostrand:1991}, \cite{Sjostrand:1996} using the the method of complex scaling. In a forthcoming paper \cite{Gannot1} we adapt this
formalism to the case of exponentially decaying perturbations of the Laplacian outside a compact set, with no analyticity assumptions. We should stress, however, that in the exact Schwarzschild--AdS setting the complex scaling approach of 
\cite{Sjostrand:1991} is also available. In the analytic black box framework it is also known that the poles of the meromorphic continuation of the resolvent agree with the poles of the scattering matrix \cite{Nedelec:2004}.

The Schwarzschild--anti-de Sitter metric in $d+1$ dimensions is a spherically symmetric solution of the vacuum Einstein equation with negative cosomological constant. Introduce the function
\[
f(r)=r^2+1-\frac{\mu}{r^{d-2}}.
\]
The parameter $\mu$ is a positive constant proportional to the mass of the black hole. Let $r_+$ denote the unique positive root of $f$; this radius define the event horizon. The region outside the horizon is the product $(0,\infty)_{t} \times (r_+,\infty)_{r} \times S^{d-1}$ and in these coordinates the metric takes the form
\begin{equation}
g = -f\,dt^2 + \frac{1}{f}dr^2 + r^2 d\Omega_{d-1}^2,
\end{equation}
where
$d\Omega_{d-1}^2$ is the standard metric on the sphere $S^{d-1}$. We will also make extensive use of the Regge-Wheeler coordinate $dz = -dr/f(r)$, defined on $(0,\infty)_z$ --- see Section \ref{sect:reggewheeler}. Note that in this coordinate the $(t,r)$ part of the metric becomes conformally flat.

Here we are measuring quantities in units of the curvature radius $l$, related to the cosmological constant by $l^2 = -\frac{d(d-1)}{2\Lambda}$. Setting $\hat{r} = l r$, $\hat{t} = l t$ and then making the conformal change $\hat{g} = l^2 g$, we have
\[
\hat{g} = -\hat{f}(\hat{r}) d\hat{t}^2 + \hat{f}(\hat{r})^{-1} d\hat{r}^2 + \hat{r}^2 d\Omega_{d-1}^2
\]
where $\hat{f}(\hat{r})$ is given by
\[
\hat{f}(\hat{r}) = \frac{\hat{r}^2}{l^2} + 1 - \frac{\hat{\mu}}{\hat{r}^{d-2}}
\]
for an appropriate $\hat{\mu}$ -- this is the usual expression for the Schwarzschild--AdS metric. In this representation, the constant $\hat{\mu}$ is related to the mass $M$ of the black hole by 
\[
M = \frac{(d-1)A_{d-1}}{16\pi}\hat{\mu},
\]
where $A_{d-1}$ is the volume of the unit $d-1$ sphere. 
  
Consider a scalar field $\Psi$ with mass-squared $m^2$ propagating in a Schwarzschild--AdS background.  We allow $m^2$ to be negative but assume that it lies above the Breitenlohner-Freedman bound, namely
\[
m^2 > m^2_\mathrm{BF} = -\dfrac{d^2}{4}.
\]
The mass threshhold $m^2_\mathrm{BF}$ is related to the stability of the scalar field under small fluctuations \cite{Breitenlohner:1982} and ensures the existence of a positive energy for the Klein--Gordon equation \cite{Holzegel:2009}. In that case if we define $\nu^2 = m^2 + \frac{d^2}{4}$ then $\nu > 0$. Some of our results also apply when $\nu=0$ but we exclude this case for simplicity.

Assuming a harmonic time dependence, the Klein--Gordon equation written in the Regge-Wheeler coordinate reduces to a scattering problem on $(0,\infty)_z$ by an exponentially decaying potential (Sections \ref{sect:reggewheeler} and \ref{sect:potential}).  By a (exponentially accurate) quasimode for this problem we mean a sequence of pairs 
\[
(u_\ell, \omega^\sharp_\ell)\in C_c^\infty([0,\infty))\times \mathbb{R}, \quad \ell\geq \ell_0,
\] 
where $u_\ell(z)$ solves the scattering problem (Equation \eqref{eq:1dschrodinger}) at energy $(\omega^\sharp_\ell)^2$ up to an error of size $O(e^{-\ell/C})$ (Section \ref{sect:quasimodes}).       

\medskip
\noindent
{\bf Main Theorem.} {\em Fix $A$ satisfying
\[
1< A < \left(1+\left(\frac{2}{\mu d}\right)^{\frac{2}{d-2}}\left(\frac{d-2}{d}\right)\right)^{1/2},
\]
and let $p = \ell - 1 + d/2$. There is an $\ell_0$ such that for each angular momentum $\ell \geq \ell_0$ there exist $m(\ell)$-many quasimodes 
\[
(u_{n,\ell},\omega^\sharp_{n,\ell}),\quad n= 1,\ldots, m(\ell), \quad 1\leq m(\ell) = O(\ell),
\]
satisfying $\omega^\sharp_{n,\ell}\in p[1,A]$. Moreover, for each fixed angular momentum $\ell \geq \ell_0$, there is a one-to-one correspondence between the $\omega^\sharp_{n,\ell}$ and quasinormal modes $\omega_{n,\ell}$ in the corresponding space of spherical harmonics, satisfying
\[
\omega_{n,\ell} = \omega^\sharp_{n,\ell} + \epsilon_{n,\ell}, \quad |\epsilon_{n,\ell}| \leq e^{-\ell/C_2}, \quad n= 1,\ldots, m(\ell).
\]
The constants $\ell_0,C_1,C_2$ all depend on $A$.

In addition, if $n\geq 0$ is fixed then we have an asymptotic expansion for the real part of the quasinormal mode in powers of $\ell^{-1/2}$,
\[
\Re \omega_{n,\ell} \sim \ell + (2n +\nu +d/2) + c_{n,1} \ell^{-1/2} + c_{n,2} \ell^{-1} + \ldots, \quad \ell \geq \ell_1 = \ell_1(n).
\]
When $d=3$ we have $c_{n,1}\neq0$; when $d=4$ we have $c_{n,1}=0$ and $c_{n,2}\neq 0$; when $d\geq 5$ we have $c_{n,1}=0$ and $c_{n,2}=0$. 
}

\medskip

The basic idea behind the construction of quasinormal modes is the existence of a potential well near spatial infinity separated from the black hole horizon by a barrier -- see Figures \ref{f:1} and \ref{f:2}. We consider a related problem supporting bound states by imposing an additional Dirichlet boundary condition in the barrier; by systematically employing the exponential decay of these states in the barrier, we construct quasimodes for the original problem. Finally, the asymptotic expansion (Section \ref{sect:expansion}) is established by identifying the Schr\"odinger operator as a harmonic oscillator plus a perturbation and constructing a harmonic approximation. Although the perturbation is not globally small, we again make use of the exponential decay of various eigenfunctions; the coefficients in the expansion are ordinary Rayleigh--Schr\"odinger coefficients. The statement about the vanishing of certain coefficients verifies (in small dimensions) a recent conjecture of Dias et al. \cite{Dias:2012}.

Existence of quasimodes has been proved independently by Holzegel and Smulevici \cite{Holzegel:2013}, more generally for Kerr--AdS black holes. By Duhamel's formula, Holzegel--Smulevici use these quasimodes to show a logarithmic lower bound for the decay rate in time of solutions to the Klein--Gordon equation. This also follows from our 
construction in the case of Schwarzschild--AdS black holes, and in a forthcoming paper \cite{Gannot2} we will show how the methods of Nakamura--Stefanov--Zworski \cite{Nakamura:2002}
give expansions of solutions to the Klein--Gordon equation in terms of resonances. We also remark that logarithmic upper bounds have already been established by Holzegel--Smulevici \cite{Holzegel:2011}, and hence this represents an optimal result.

Since our quasimode construction amounts to solving an ODE of Sturm--Liouville type, we can apply a robust numerical solver \cite{Bailey:2001} to compute the associated quasimodes to high precision, even for large values of $\ell \sim 10^4$ . We find excellent agreement between these numerically computed values and the ones computed via the asymptotic expansion, with the error behaving as predicted by Proposition~\ref{thm:expansion}. 

\section{Black holes in Anti-de Sitter spacetime}

\subsection{Klein--Gordon equation}
The scalar field $\Psi$ is a solution to the Klein\textendash{}Gordon equation
\begin{equation} \label{eq:kleingordon}
(\Box_g - m^2)\Psi = 0.
\end{equation}
To compute $\Box_g$, choose coordinates $(\sigma_1,\ldots,\sigma_{d-1})$ on $S^{d-1}$ and verify that
\begin{align*}
& \frac{1}{\sqrt{-g}}\partial_{\sigma_{i}}(g^{\sigma_i \sigma_j} \sqrt{-g}\, \partial_{\sigma_{j}}) = \frac{1}{r^2}\Delta_{S^{d-1}},  \\[.5ex]
& \frac{1}{\sqrt{-g}}\partial_{t}(g^{tt} \sqrt{-g}\, \partial_{t}) = -\frac{1}{f}\partial_t^2, \\[.5ex]
& \frac{1}{\sqrt{-g}}\partial_{r}(g^{rr} \sqrt{-g}\, \partial_{r}) = \frac{1}{r^{d-1}}\partial_r(r^{d-1}f\partial_r). 
\end{align*}
Therefore 
\begin{equation}
\Box_g = -\frac{1}{f}\partial_t^2 + \frac{1}{r^{d-1}}\partial_r(r^{d-1}f\partial_r) + \frac{1}{r^2}\Delta_{S^{d-1}}.
\end{equation}

In order to solve \eqref{eq:kleingordon} we expand $\Psi$ in spherical harmonics.  Let $Y_{\ell,j}$ be a spherical harmonic with eigenvalue $-\ell(\ell+d-2)$ and consider the ansatz
\[
\Psi(t,r,\sigma;\ell,j,\omega) = r^{\frac{-d+1}{2}} e^{-i\omega t} \ Y_{\ell,j}(\sigma) \ \psi(r;\ell,\omega).
\]
 Applying $\left(\Box_g - m^2\right)$ to $\Psi$, we see that $\psi$ must satisfy the equation 
\begin{equation} \label{eq:sturmliouville}
f\DR\left(f\DR\psi\right)- f\left(\frac{(2\ell+d-2)^2-1}{4r^2} + \nu^2-\frac14 + \frac{\mu(d-1)^2}{4r^d} \right)\psi = -\omega^2\psi.
\end{equation}
for $r\in(r_0,\infty)$.
Dividing both sides by $f$ brings the equation into familiar Sturm--Liouville form. 

\subsection{Reduction to the Schr\"odinger equation} \label{sect:reggewheeler}

Define the \emph{Regge--Wheeler coordinate} by the formula 
\begin{equation} 
z(r)=\int_{r}^\infty \frac{dt}{f(t)}.
\end{equation}
This choice ensures that
\begin{equation} \label{eq:RW}
f\DR\left(f\DR\right) = \DDZ,
\end{equation}
which reduces \eqref{eq:sturmliouville} to a Schr\"odinger equation. First we record some basic observations: $r\mapsto z(r)$ maps $(r_+, \infty)$ analytically onto $(0,\infty)$ with $z(r_+)=\infty$ and and $z(\infty)=0$. In particular we have:

\begin{lem} \label{lem:rasymptotics}
The inverse $z\mapsto r(z)$ satisfies $r(z) = \dfrac{1}{z} - \dfrac{z}{3} + O(z^2) \textrm{ as } z\rightarrow 0$ and $r(z) = r_+ + O(e^{-\gamma z}) \textrm{ as } z\rightarrow \infty$ for some $\gamma>0$. Both of these asymptotics are differentiable.
\end{lem}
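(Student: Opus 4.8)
The plan is to handle the two endpoints $z\to 0^+$ and $z\to\infty$ separately, using that $z(r)$ is a decreasing analytic bijection of $(r_+,\infty)$ onto $(0,\infty)$ (as already noted), and then to deduce the differentiated asymptotics from the identity $r'(z)=-f(r(z))$, which is immediate from $dz/dr=-1/f(r)$.

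Near $z=0$ (so $r\to\infty$): since $f(t)=t^2(1+t^{-2}-\mu t^{-d})$ and $d\ge 3$, a geometric-series expansion gives $1/f(t)=t^{-2}-t^{-4}+O(t^{-5})$ for $t$ large, and integrating from $r$ to $\infty$ yields $z(r)=r^{-1}-\tfrac13 r^{-3}+O(r^{-4})$. Writing $\rho=1/r$, this reads $z=\rho-\tfrac13\rho^3+O(\rho^4)$, an analytic function of $\rho$ with derivative $1$ at $\rho=0$; by the analytic inverse function theorem $\rho=\rho(z)$ is analytic near $0$ with $\rho(z)=z+\tfrac13 z^3+O(z^4)$, hence $r(z)=1/\rho(z)=z^{-1}-\tfrac z3+O(z^2)$. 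Moreover $r(z)-z^{-1}=(z-\rho(z))/(z\,\rho(z))$ is a ratio of analytic functions vanishing to orders $3$ and $2$ at $z=0$, so it extends analytically across $0$; this shows the expansion may be differentiated term by term (alternatively, substitute the expansion into $r'(z)=-f(r(z))$ and match coefficients).

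Near $z=\infty$ (so $r\to r_+$): since $f'(r)=2r+(d-2)\mu r^{1-d}>0$ on $(0,\infty)$ and $f(r_+)=0$, we have $\gamma_0:=f'(r_+)>0$ and $f(r)=(r-r_+)h(r)$ with $h$ analytic near $r_+$ and $h(r_+)=\gamma_0$. Then $1/f(r)=\gamma_0^{-1}(r-r_+)^{-1}+g(r)$, where $g(r)=(r-r_+)^{-1}\bigl(h(r)^{-1}-\gamma_0^{-1}\bigr)$ extends analytically across $r_+$. Fixing $r_1\in(r_+,\infty)$ and integrating from $r$ to $\infty$ gives
\[
z(r)=-\gamma_0^{-1}\log(r-r_+)+C_0+O(r-r_+),\qquad r\to r_+^{\,+},
\]
with $C_0$ a constant; solving for $r-r_+$ and using that it tends to $0$ bootstraps this to $r(z)-r_+=e^{\gamma_0 C_0}e^{-\gamma_0 z}\bigl(1+O(e^{-\gamma_0 z})\bigr)$, so $r(z)=r_++O(e^{-\gamma z})$ with $\gamma=\gamma_0$. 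Differentiability is then immediate from $r'(z)=-f(r(z))=-\gamma_0(r(z)-r_+)+O((r(z)-r_+)^2)=O(e^{-\gamma z})$.

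I expect the only step that is not entirely routine to be the $z\to\infty$ analysis: one must first peel off the logarithmic singularity of $1/f$ at $r_+$, keeping the remaining integral analytic, and then invert an implicit relation of the form $z=-\gamma_0^{-1}\log(r-r_+)+(\text{analytic})$ and bootstrap it to extract the exponentially small remainder along with its derivatives. The $z\to 0$ part is a pure power-series inversion.
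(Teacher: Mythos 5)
Your proof is correct and follows essentially the same route as the paper: expand $1/f$ in powers of $1/r$ and invert the resulting series for the $z\to 0$ asymptotics, and peel off the logarithmic singularity of $1/f$ at the simple root $r_+$ before inverting for the $z\to\infty$ asymptotics. The only cosmetic difference is that where you bootstrap the relation $z=-\gamma_0^{-1}\log(r-r_+)+C_0+O(r-r_+)$ and then read off derivative bounds from $r'(z)=-f(r(z))$, the paper invokes the implicit function theorem once to write $r(z)=F(e^{-f'(r_+)z})$ with $F$ analytic and $F(0)=r_+$, which packages the exponential decay and its differentiability in one step.
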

\begin{proof}
Since 
\[
\dfrac{1}{f(r)} = \dfrac{1}{r^2+1-\frac{\mu}{r^{d-2}}} = \dfrac{1}{r^2} -\frac{1}{r^4} + O\left(\dfrac{1}{r^5}\right)
\]
near $r=\infty$, we have
$z(r) = \dfrac{1}{r} - \dfrac{1}{3r^3} +  O\left(\dfrac{1}{r^4}\right)$ also near $r=\infty$ and hence $r(z) = \dfrac{1}{z} - \dfrac{z}{3} + O(z^2) \textrm{ as } z\rightarrow 0$. On the other hand, since $r_+$ is a simple root of $f$, expand $f$ at $r_+$ and integrate to obtain 
\[
-f'(r_+)z(r) = \log(r-r_+) + G(r),
\] 
where $G(r)$ is analytic near $r=r_+$. By an application of the implicit function theorem, it follows that $r(z) = F(e^{-f'(r_+)z})$ with $F$ analytic near zero and $F(0) = r_+$. The result follows with $\gamma = f'(r_+) > 0$.    
\end{proof}
\begin{rem} Spatial infinity corresponds to $r=\infty$ while the event horizon corresponds to $z=\infty$. Since from now on we will mostly use the Regge-Wheeler coordinate, we stress that ``infinity'' will refer to $z=\infty$ unless stated otherwise.
\end{rem}

Using \eqref{eq:RW}, we see the function $z\mapsto \psi(r(z))$ must satisfy the one-dimensional Schr\"odinger equation
\begin{equation} \label{eq:1dschrodinger}
\left(-\DDZ + V_{\mathrm{eff}}(z;\ell) -\omega^2\right)\psi(r(z)) = 0
\end{equation}
for $z\in(0,\infty)$, with the effective potential
\begin{equation} \label{eq:eff}
V_{\mathrm{eff}}(z;\ell)= f(r(z))\left(\frac{(2\ell+d-2)^2-1}{4r(z)^2} + \nu^2-\frac{1}{4}+ \frac{\mu(d-1)^2}{4r(z)^d}\right),
\end{equation}
and spectral parameter $\omega^2$.

\subsection{Analysis of the effective potential} \label{sect:potential}
To study the large angular momentum limit, introduce a semiclassical parameter 
\begin{equation}
h^{-1}=\frac{(2\ell+d-2)}{2}
\end{equation}
so that $h\rightarrow 0$ as $\ell\rightarrow\infty$. Multiplying Equation \eqref{eq:1dschrodinger} by $h^2$ results in a semiclassical Schr\"odinger equation. Define a new potential and spectral parameter by
\[
V(z;h) = h^2 V_\mathrm{eff}(z; \ell), \quad E(h) = h^2\omega^2.
\]
Then $z\mapsto \psi(r(z))$ satisfies Equation \eqref{eq:1dschrodinger} if and only if it satisfies
\begin{equation} \label{eq:rescaledschrodinger}
\left(-h^2 \DDZ+V(z;h)-E(h)\right)\psi(r(z)) = 0.
\end{equation}
We will continue to refer to $V(z;h)$ as the effective potential. 

\begin{lem} \label{prop:Vinfinity}
The effective potential $V$ satisfies $\frac{d^k}{dz^k} V(z;h) = O\left(e^{-\gamma z}\right),\, k\geq 0$, uniformly in $h$ as $z\rightarrow \infty$.\end{lem}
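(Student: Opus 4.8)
The plan is to reduce the statement to two facts: that $f(r(z))$ and all of its $z$-derivatives decay like $e^{-\gamma z}$, and that, after the rescaling by $h^2$, the remaining factor in \eqref{eq:eff} is a smooth function of $z$ which, together with all its $z$-derivatives, is bounded for large $z$ uniformly in $h$. The Leibniz rule applied to the product then gives the claim.

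First I would put $V(z;h)$ into a form in which the semiclassical parameter enters only through bounded coefficients. Since $h^{-1}=(2\ell+d-2)/2$ we have $h^2\frac{(2\ell+d-2)^2-1}{4}=1-\frac{h^2}{4}$, so
\[
V(z;h)=f(r(z))\left(\frac{1-h^2/4}{r(z)^2}+h^2\Bigl(\nu^2-\tfrac14\Bigr)+h^2\frac{\mu(d-1)^2}{4\,r(z)^d}\right).
\]
Write $P(z;h)$ for the bracket. For $z\ge z_0$ with $z_0$ large, Lemma \ref{lem:rasymptotics} gives $r(z)=r_++O(e^{-\gamma z})$, so $r(z)$ is bounded above and bounded below away from $0$; hence $1/r(z)^2$ and $1/r(z)^d$ are smooth and bounded there. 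Moreover, the proof of Lemma \ref{lem:rasymptotics} established $r(z)=F(e^{-\gamma z})$ with $F$ analytic near $0$, and differentiating this chain shows every $z$-derivative of $r(z)$ of order $\ge1$ is again $e^{-\gamma z}$ times a function analytic in $e^{-\gamma z}$, in particular $O(e^{-\gamma z})$. Consequently $P(z;h)$ and all its $z$-derivatives are $O(1)$ as $z\to\infty$, uniformly for $h\in(0,h_0]$, since $h$ occurs only in the bounded factors $1-h^2/4$ and $h^2$.

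Next I would control $f(r(z))$. From the definition of the Regge--Wheeler coordinate, $dr/dz=-f(r(z))$, so $f(r(z))=-r'(z)$, which by the previous paragraph is $O(e^{-\gamma z})$ and in fact of the form $e^{-\gamma z}$ times a function analytic in $e^{-\gamma z}$; this structure is preserved under further $z$-differentiation, so $\frac{d^j}{dz^j}f(r(z))=O(e^{-\gamma z})$ for every $j\ge0$, with no $h$-dependence. Finally $V(z;h)=f(r(z))\,P(z;h)$, so by the Leibniz rule
\[
\frac{d^k}{dz^k}V(z;h)=\sum_{j=0}^{k}\binom{k}{j}\left(\frac{d^{j}}{dz^{j}}f(r(z))\right)\left(\frac{d^{k-j}}{dz^{k-j}}P(z;h)\right)=O(e^{-\gamma z}),
\]
uniformly in $h$, as claimed. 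The only point that needs care is upgrading the first-order differentiable asymptotics of Lemma \ref{lem:rasymptotics} to control of all higher $z$-derivatives; this is why I would work from the representation $r(z)=F(e^{-\gamma z})$ rather than use the $O$-statement as a black box. The uniformity in $h$ is then immediate, since after the rescaling $h$ sits only in the coefficients $1-h^2/4$ and $h^2$, and the rest is routine bookkeeping.
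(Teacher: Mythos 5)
Your proof is correct and follows essentially the same route as the paper: factor $V=f(r(z))\,P(z;h)$ with $P$ uniformly bounded in $h$ together with all its $z$-derivatives, show $\frac{d^j}{dz^j}f(r(z))=O(e^{-\gamma z})$ via Lemma \ref{lem:rasymptotics}, and conclude by the Leibniz rule. Your use of the representation $r(z)=F(e^{-\gamma z})$ (and the identity $f(r(z))=-r'(z)$) simply makes explicit the paper's remark that the asymptotics of $r(z)$ can be differentiated to all orders.
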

\begin{proof} Using Lemma \ref{lem:rasymptotics} we see that $f(r(z)) = f(r_+ + O(e^{-\gamma z})) = O(e^{-\gamma z})$ for large $z$. Since the asymptotics of $r(z)$ can be differentiated, it also follows that $\frac{d^k}{dz^k} f(r(z)) =O\left(e^{-\gamma z}\right)$ for large $z$. But $V$ is the product of $f$ and 
\[
r^{-2} -4 h^2 r^{-2} + h^2(\nu^2-1/4)+ 4 h^2 \mu(d-1)^2 r^{-d}.
\]
which is uniformly bounded in $z$ and $h$ along with all of its derivatives. It remains to apply the Leibniz rule.
\end{proof}

Different decompositions of the effective potential are useful; with respect to the $r$-coordinate, the most natural is
\[
V(z;h) = V_{-1}(z;h) + V_0(z) + h^2 V_1(z),
\]
where
\[
V_{-1} = h^2\left(\nu^2-\frac{1}{4}\right)f,\quad V_0 = 1 + \frac{1}{r^2} -\frac{\mu}{r^d}, \quad V_1 = \left(-\frac{1}{4r^2} +\frac{\mu(d-1)^2}{4r^d}\right)f.
\]
Here $V_{-1}$ is the analogue of the usual centrifugal term which appears in spherically symmetric problems after separation of variables, in the sense that it behaves as $h^2 (\nu^2-1/4)z^{-2}$ as $z\rightarrow 0$. In the same parlance, $V_0$ plays the role the physical potential, while $V_1$ is uniformly bounded and hence $h^2 V_1$ is globally a lower order term in $h$. On the other hand, from the scattering point of view it is natural to consider $-h^2\DDZ + h^2(\nu^2-1/4)z^{-2}$ as the unperturbed, or free, operator. We therefore define the perturbation $W(z;h)$ by the formula
\[
V(z;h) =  h^2(\nu^2-1/4)z^{-2} + W(z;h).
\]
In light of Lemmas \ref{lem:rasymptotics} and \ref{prop:Vinfinity}, we see that $W$ is smooth, uniformly bounded in $h$, and decays as $z\rightarrow \infty$ like an inverse square when $\nu \neq 1/2$ or exponentially when $\nu =1/2$. Although $W$ is not in general exponentially decaying, it will be useful in some auxiliary results. 

For use in the perturbation expansion of low-lying quasimodes, we also record the following:
\begin{lem} \label{prop:Vasymptotics}
The effective potential can be written as
\[
V(z;h) = h^2\left(\frac{\nu^2-\frac14}{z^2}\right)+1+z^2+ R(z;h),
\]
where $R(z;h) = O(z^3) + h^2 O(1)$ for $z$ in a compact set. 
\end{lem}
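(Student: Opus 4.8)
The plan is to expand $V(z;h)$ directly from its definition \eqref{eq:eff}, using the relation $(2\ell+d-2)^2 = 4h^{-2}$, and match the result against the claimed normal form. First I would substitute this into \eqref{eq:eff} and multiply by $h^2$; the $\ell$-dependent centrifugal coefficient splits as $\frac{(2\ell+d-2)^2-1}{4r^2} = \frac{h^{-2}}{r^2} - \frac{1}{4r^2}$, so that only the first term survives at leading order and
\[
V(z;h) = \frac{f(r(z))}{r(z)^2} + h^2 f(r(z))\left(\nu^2 - \frac14 - \frac{1}{4r(z)^2} + \frac{\mu(d-1)^2}{4r(z)^d}\right).
\]

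The leading term is $f(r)/r^2 = 1 + r^{-2} - \mu r^{-d}$, which is precisely $V_0(z)$. By Lemma \ref{lem:rasymptotics}, $r(z) = z^{-1} + O(z)$ with a remainder that is smooth (indeed analytic) near $z=0$, so $r(z)^{-1} = z + O(z^3)$, hence $r(z)^{-2} = z^2 + O(z^4)$ and $\mu r(z)^{-d} = O(z^d) = O(z^3)$ because $d\geq 3$. Since $V_0$ is smooth on $[0,\infty)$, this yields $V_0(z) = 1 + z^2 + O(z^3)$ on any compact set, supplying the $1+z^2$ of the normal form along with an $O(z^3)$ remainder.

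For the $h^2$ term the only point requiring care is that $f(r(z))$ is not bounded near $z=0$: writing $f(r) = r^2 + 1 - \mu r^{-d+2}$ and using $r(z)^2 = z^{-2} + O(1)$ (again from Lemma \ref{lem:rasymptotics}, with smooth $O(1)$ error), one finds
\[
h^2 f(r(z))\left(\nu^2 - \frac14\right) = h^2\,\frac{\nu^2 - \frac14}{z^2} + h^2\,O(1),
\]
so the $\nu^2 - \tfrac14$ contribution folds into the singular term of the normal form. The remaining pieces $-\tfrac{1}{4r(z)^2}$ and $\tfrac{\mu(d-1)^2}{4r(z)^d}$ are multiplied by $f(r(z))$, and since $f(r)/r^2 = V_0$ and $f(r)/r^d = V_0\,r^{-d+2}$ are bounded on the relevant compact set, these terms are $h^2\,O(1)$. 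Collecting everything gives the asserted decomposition with $R(z;h) = \big(V_0(z) - 1 - z^2\big) + h^2\,O(1) = O(z^3) + h^2\,O(1)$. I do not expect a genuine obstacle here; the argument is bookkeeping on the asymptotics already furnished by Lemma \ref{lem:rasymptotics}, the one thing to watch being not to discard the $z^{-2}$ singularity by treating $f(r(z))$ as bounded.
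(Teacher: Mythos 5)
Your proposal is correct and follows essentially the same route as the paper: expand $V$ near $z=0$ using the asymptotics of $r(z)$ from Lemma \ref{lem:rasymptotics}, after splitting off the $h^{-2}$ part of the centrifugal coefficient (the paper's $V_{-1}+V_0+h^2V_1$ decomposition in disguise). Your extra care with $f(r(z))\sim z^{-2}+O(1)$ and with the boundedness of $f/r^2$ and $f/r^d$ is exactly the bookkeeping the paper leaves implicit, so there is nothing to add.
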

\begin{proof} Using Lemma \ref{lem:rasymptotics} we see that near $z=0$,
\[
V(z;h) = h^2\left(\frac{\nu^2-\frac14}{z^2}\right)+1 + z^2 + h^2 \left(\frac{\nu^2-1}{3}\right) + h^2 O(z) + O(z^3).
\]
\end{proof}

The behavior of the effective potential near the origin depends on the value of $\nu$. For example $V$ is repulsive if $\nu > 1/2$ and weakly attractive when $0 < \nu \leq 1/2$. The case $\nu = 1/2$ is the conformally coupled case. Despite the different pointwise behavior of the centrifugal term $V_{-1}$, by a Hardy inequality we are able to treat all values of $\nu > 0$ on equal footing. Therefore, we will mostly be concerned with the structure of the physical potential $V_0$. We have $V_0(z) > 0$ and clearly $V_0(0) = 1$, $V_0(z)\rightarrow 0$ as $z\rightarrow \infty$.  

\begin{figure}[htb]
\centering
\includegraphics[width=\textwidth]{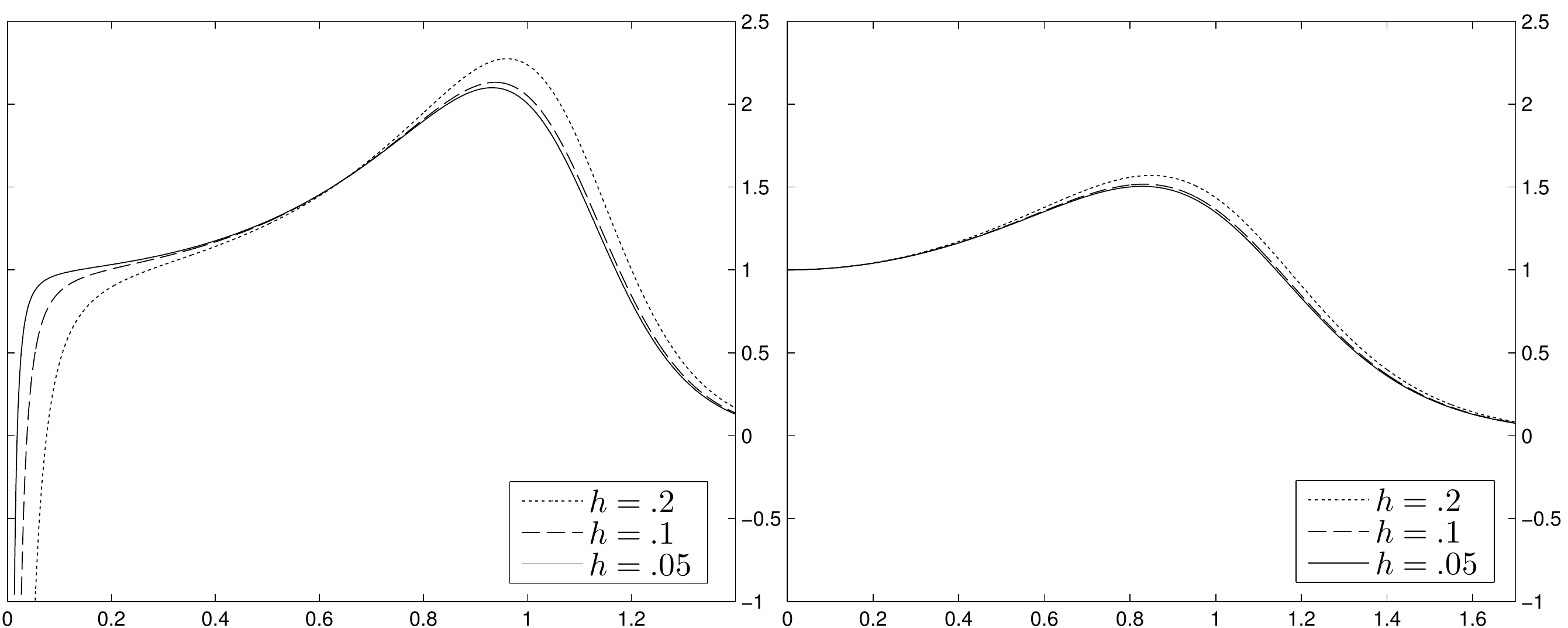}
\caption{Plots of $z$ versus $V(z;h)$ for different values of $d,\mu,\nu,h$. Left: $d=6,\, \mu =1/8,\, \nu = \sqrt{3/28}$. Right: $d=4,\,\mu=1/2,\,\nu = 1/2$. See Figure \ref{f:2} for a plot when $\nu > 1/2$.} \label{f:1}
\end{figure}

\begin{lem} The physical potential $V_0$ has a unique nondegerate local maximum satisfying 
	\[
	z_\mathrm{max} = z\left(\left(\frac{\mu d}{2}\right)^{\frac{1}{d-2}}\right), \qquad V_0(z_\mathrm{max}) = 1+\left(\frac{2}{\mu d}\right)^{\frac{2}{d-2}}\left(\frac{d-2}{d}\right),
	\]
	and no other local extrema for $z\in (0,\infty)$.
\end{lem}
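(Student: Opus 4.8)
The plan is to transfer the problem to the $r$-coordinate, where $V_0$ becomes the explicit rational function $g(r) := 1 + r^{-2} - \mu r^{-d}$. By Lemma~\ref{lem:rasymptotics} the map $z\mapsto r(z)$ is a smooth strictly decreasing diffeomorphism of $(0,\infty)$ onto $(r_+,\infty)$, and from $dz = -dr/f(r)$ we have $\frac{d}{dz} = -f(r)\frac{d}{dr}$ with $f(r) > 0$ throughout $(r_+,\infty)$. Consequently $\frac{dV_0}{dz} = -f(r)\,g'(r)$, so the critical points of $z\mapsto V_0(z)$ are exactly the images under $z(\cdot)$ of the critical points of $g$ lying in $(r_+,\infty)$; moreover at such a point $\frac{d^2 V_0}{dz^2} = f(r)^2 g''(r)$, since the term involving $g'$ drops out. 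Thus it suffices to analyze $g$ on $(r_+,\infty)$.

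First I would compute $g'(r) = -2r^{-3} + d\mu\, r^{-d-1} = r^{-d-1}\big(d\mu - 2r^{d-2}\big)$, which vanishes on all of $(0,\infty)$ only at $r_* := (d\mu/2)^{1/(d-2)}$ and changes sign there from positive to negative; equivalently $g''(r_*) = 6 r_*^{-4} - d(d+1)\mu\, r_*^{-d-2} = r_*^{-4}\big(6 - d(d+1)\cdot\tfrac{2}{d}\big) = -2(d-2)\,r_*^{-4} < 0$ using $d\geq 3$. So $g$ has a unique critical point in $(0,\infty)$, and it is a nondegenerate maximum. Since $f>0$ on $(r_+,\infty)$, once we know $r_*$ lies in this interval the zeros of $\frac{dV_0}{dz}$ are in bijection with the zeros of $g'$ there, giving uniqueness of the extremum for $V_0$ as well.

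Next I would check that $r_*$ actually lies in the physical region $(r_+,\infty)$. From $f(r_+) = 0$ we get $\mu = r_+^{d-2}(r_+^2 + 1)$, hence $r_*^{d-2} = \tfrac{d\mu}{2} = \tfrac{d}{2}(r_+^2+1)\, r_+^{d-2} > r_+^{d-2}$, because $\tfrac{d}{2}(r_+^2+1) \geq \tfrac{3}{2} > 1$; therefore $r_* > r_+$. It then follows from the first paragraph that $z_\mathrm{max} := z(r_*) = z\big((d\mu/2)^{1/(d-2)}\big)$ is the unique critical point of $V_0$ on $(0,\infty)$, that it is a nondegenerate local maximum, and that there are no other local extrema.

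Finally I would evaluate $V_0$ there: writing $g(r_*) = 1 + r_*^{-2} - \mu r_*^{-(d-2)} r_*^{-2}$ and using $\mu r_*^{-(d-2)} = \mu/(d\mu/2) = 2/d$ gives $g(r_*) = 1 + r_*^{-2}\big(1 - \tfrac{2}{d}\big) = 1 + \big(\tfrac{2}{\mu d}\big)^{2/(d-2)}\tfrac{d-2}{d}$, as claimed. The only point requiring genuine care is the inequality $r_* > r_+$ — i.e.\ making sure the critical point of $g$ is not hidden behind the horizon — and this is exactly where the defining relation $f(r_+)=0$ and the hypothesis $d\geq 3$ enter; everything else is a direct computation.
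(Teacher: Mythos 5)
Your proof is correct and follows the same route as the paper, which simply reduces the problem to finding the roots of $\frac{d}{dr}V_0(z(r)) = -\frac{2}{r^3}+\frac{\mu d}{r^{d+1}}$ on $(r_+,\infty)$ and leaves the remaining computations to the reader. You supply the details the paper omits — the chain-rule sign bookkeeping, the nondegeneracy via $g''(r_*)<0$, the verification that $r_*>r_+$ using $f(r_+)=0$, and the evaluation of $V_0(z_\mathrm{max})$ — all of which check out.
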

\begin{proof}
To find the extrema of $V_0(z)$, it suffices to find the roots of
\[
\DR V_0(z(r)) = - \frac{2}{r^3} + \frac{\mu d}{r^{d+1}}
\]
for $r\in (r_+,\infty)$.
\end{proof}

The existence of this local maximum is related to the trapping of null-geodesics on the background \cite{Holzegel:2011}. Next, we examine turning points. By the previous lemma, for any real $1<E<V_0(z_\mathrm{max})$ the equation $V_0(z)-E=0$ has two solutions, see Figure \ref{f:2}. We will denote these two turning points as $z_A(E)$ and $z_B(E)$ where $z_A(E)<z_B(E)$. Clearly when $E$ is independent of $h$, so are $z_A(E)$ and $z_B(E)$ --- they are given by $z(r_A(E))$ and $z(r_B(E))$ where $r_A(E) > r_B(E)$ are the real solutions to
\[
1 - E + \frac{1}{r^2} -\frac{\mu}{r^d} = 0.
\]
We are also interested in those energies $E$ satisfying $E = 1+Th$ for fixed $T>0$ and $h$ small enough, since at these energy levels the harmonic approximation (Proposition \ref{thm:h1/2}) is valid.
\begin{lem} \label{lem:turningpoints}
Suppose $E = 1+Th$ where $T>0$ is independent of $h$. There exists $h_0>0$ and positive constants $z'_{A}(T),z'_{B}$ such that if $h\in(0,h_0)$ then the following is true: 
	\[
	 z_A(1+Th) = z'_{A}(T)h^{1/2} + O(h), \quad z_B(1+Th) = z'_{B} + O(h).
	\]
	Furthermore, $z'_{A}(T) = T^{1/2}$ and $z'_{B} = z\left(\mu^{\frac{1}{d-2}}\right)$.
\end{lem}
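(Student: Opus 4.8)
\noindent\emph{Proof proposal.} The turning points $z_A(E)<z_B(E)$ are the two solutions of $V_0(z)=E$, so the plan is to exploit the shape of $V_0$ recorded above: it is strictly increasing on $(0,z_\mathrm{max}]$ from the limiting value $V_0(0^+)=1$ up to $V_0(z_\mathrm{max})$, and strictly decreasing on $[z_\mathrm{max},\infty)$ down to $0$, with no other critical points. Hence for $h$ small enough that $1+Th<V_0(z_\mathrm{max})$ --- which fixes $h_0$ --- the root $z_A(1+Th)$ lies on the increasing branch near $z=0$ while the root $z_B(1+Th)$ lies on the decreasing branch, and on each branch it is the \emph{unique} such root. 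I would then treat the two branches by different devices.

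For the outer turning point I would first guess the limit $z'_B=z(\mu^{1/(d-2)})$: the relation $f(r_+)=0$ gives $\mu=r_+^{d-2}(r_+^2+1)$, so $\mu^{1/(d-2)}\in(r_+,\infty)$, and since $d>2$ one has $\mu^{1/(d-2)}<(\mu d/2)^{1/(d-2)}$, i.e.\ $z'_B>z_\mathrm{max}$ sits on the decreasing branch. Evaluating $\DR V_0(z(r))=-2r^{-3}+d\mu\,r^{-(d+1)}$ at $r=\mu^{1/(d-2)}$ shows that $V_0(z'_B)=1$ and $V_0'(z'_B)\neq0$, so the implicit function theorem applied to $V_0(z)-E=0$ at $(z'_B,1)$ produces a smooth branch $E\mapsto\zeta(E)$ with $\zeta(1)=z'_B$ and $\zeta'(1)=1/V_0'(z'_B)$. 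By uniqueness on the decreasing branch this branch is $z_B(E)$ for $E$ near $1$, whence $z_B(1+Th)=z'_B+Th/V_0'(z'_B)+O(h^2)=z'_B+O(h)$.

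The inner turning point $z_A$ is the crux, because its limiting position $z=0$ is the boundary of the domain --- where the full potential $V$ is singular --- and it approaches that boundary at the borderline rate $h^{1/2}$, so no plain implicit function argument applies; the idea is to unfold this scale. Using the (analytic) expansion of Lemma~\ref{lem:rasymptotics} --- equivalently Lemma~\ref{prop:Vasymptotics} after dropping the $h$-dependent remainder, which comes from $V_{-1}$ and $h^2V_1$ --- I would write $1/r(z)=z+O(z^3)$, hence $V_0(z)-1=z^2\psi(z)$ with $\psi$ smooth near $0$ and $\psi(0)=1$. Then $\Phi(z):=z\,\psi(z)^{1/2}$ is smooth near $0$ with $\Phi(0)=0$ and $\Phi'(0)=1$, so it admits a smooth local inverse with $\Phi^{-1}(s)=s+O(s^2)$. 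Since $z_A(1+Th)\to0$ as $h\to0$ by monotonicity on the increasing branch, for small $h$ the equation $V_0(z_A)=1+Th$ reads $\Phi(z_A)^2=Th$, and taking the positive square root gives $z_A(1+Th)=\Phi^{-1}((Th)^{1/2})=(Th)^{1/2}+O(h)=T^{1/2}h^{1/2}+O(h)$, so $z'_A(T)=T^{1/2}$.

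The only genuine obstacle is this boundary degeneration of $z_A$ at rate $h^{1/2}$. It is dealt with by the observation that the turning points are defined through the regular part $V_0$ alone, so that the square-root factorization $V_0-1=z^2\psi$ linearizes the rescaled equation under $z=h^{1/2}s$; everything else is the inverse and implicit function theorems applied on the two monotone branches of $V_0$.
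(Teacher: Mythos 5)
Your proposal is correct, and its overall architecture matches the paper's: fix $h_0$ so that $1+Th<V_0(z_\mathrm{max})$, handle the outer turning point by the implicit function theorem at a simple root, and resolve the inner turning point by unfolding the $h^{1/2}$ degeneracy. The execution differs in coordinates and in the device used at the degenerate point. The paper works entirely in the $r$-variable, where $V_0(z(r))-1-Th=0$ is the explicit equation $g(r,h)=r^{-2}-\mu r^{-d}-Th=0$: the root $r=\mu^{1/(d-2)}$ of $g(\cdot,0)$ is simple (giving $z_B$), while the root responsible for $z_A$ escapes to $r=\infty$ and is captured by the rescaling $\tilde h=h^{1/2}$, $\tilde r=\tilde h r$, after which $\tilde g(\tilde r,0)=\tilde r^{-2}-T$ has the simple root $\tilde r=T^{-1/2}$; one then returns to $z$ via $z(r)=r^{-1}+O(r^{-3})$. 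You instead stay in the $z$-variable and replace the rescaling by a Morse-type factorization $V_0(z)-1=z^2\psi(z)=\Phi(z)^2$ with $\Phi'(0)=1$, inverting $\Phi$ at $s=(Th)^{1/2}$. Both are legitimate; the paper's version has the advantage that the equation for the turning points is completely explicit in $r$ (no appeal to the analyticity of $r(z)$ beyond Lemma~\ref{lem:rasymptotics} at the very end, and the error exponent can be read off from the $\tilde h^{d-2}$ term), while yours keeps everything in the coordinate in which $z_A,z_B$ are actually used and makes the mechanism of the $h^{1/2}$ scale (a square-root vanishing of $V_0-1$ at the boundary) more transparent; your justification that $\psi$ is smooth with $\psi(0)=1$ does rest on the expansion $1/r(z)=z+O(z^3)$ from Lemma~\ref{lem:rasymptotics}, which you correctly invoke. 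The auxiliary facts you use (that $\mu^{1/(d-2)}\in(r_+,r_\mathrm{max})$, the nonvanishing of $\frac{d}{dr}V_0(z(r))$ there, and $z_A\to0$ by monotonicity) all check out.
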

\begin{proof}
Set $g(r,h) = r^{-2} - \mu r^{-d} - Th$. Then $r=\mu^{\frac{1}{d-2}}$ is a simple root of $g(r,0)$, so we may apply the implicit function theorem. The root of $g(r,0)$ at $r=0$ is a multiple root, so instead rescale by $\tilde{h} = h^{1/2}, \tilde{r} = \tilde{h}r$ and set $\tilde{g}(\tilde{r},\tilde{h}) = \tilde{r}^{-2} - \tilde{h}^{d-2}\mu \tilde{r}^{-d} -T$. Then $\tilde{g}(\tilde{r},0)$ has a simple root at $\tilde{r} = T^{-1/2}$. The proof is finished by an application of the implicit function theorem and the asymptotics of $z(r)$ for large $r$.
\end{proof} 

\begin{figure}[htpb]
\hbox{\hspace{-25ex}\includegraphics[scale=0.8]{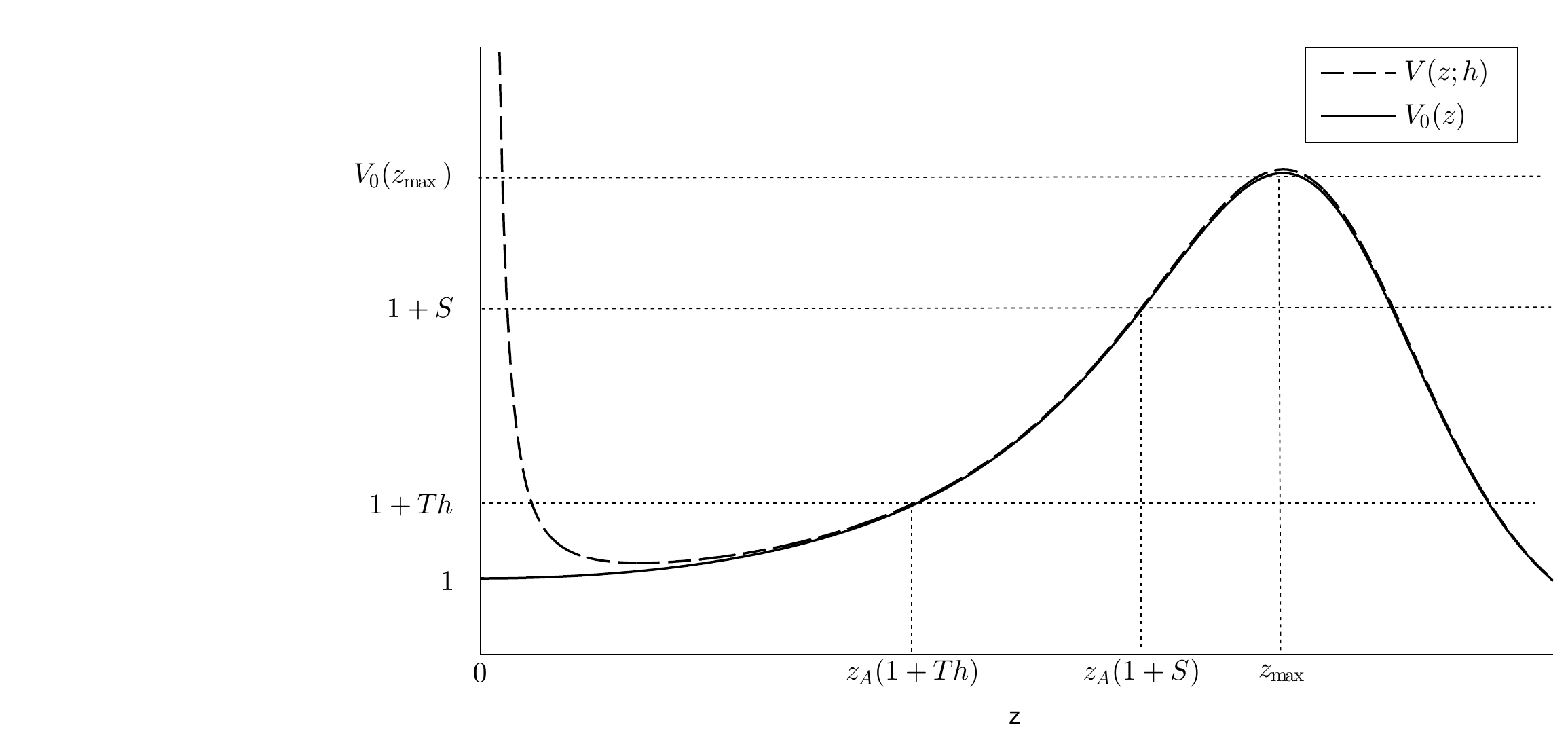}}
\caption{A schematic plot of $V_0$ and $V$ for $\nu > 1/2$ illustrating the maximum and the left-most turning points.}
\label{f:2}
\end{figure}

\section{Quasimodes}

\subsection{Self-adjoint realizations}
Our first goal is to give a Hilbert space formulation of the resonance problem. In other words, we are interested in choosing a suitable self-adjoint realization $P(h)$ of $-h^2\DDZ + V$ on $L^2(0,\infty)$. Then, in order to construct quasimodes for $P(h)$ we also realize $-h^2\DDZ + V$ as a self-adjoint \emph{reference operator} $P^\sharp(h)$ on $L^2(0,z_\mathrm{max})$ with discrete spectrum. Each eigenfunction of $P^\sharp(h)$ will give rise to a quasimode for $P(h)$. We therefore begin by discussing self-adjoint realizations of $-h^2\DDZ + V$ on an arbitrary interval $J = (0,c)$, where $0<c\leq \infty$. Of course the whole subtlety here lies in that $V$ has a singularity at the origin ---  for all the material in this section, we refer to the books \cite{Zettl}, \cite{Weidmann} where exhaustive treatments of singular Sturm--Liouville operators can be found. Since $W$ is analytic at the origin, the classical Frobenius theory for ordinary differential equations applies. The regular singular point at the origin has indicial roots $\nu_+ = 1/2+\nu$ and $\nu_- = 1/2-\nu$ and hence for $\nu > 0$ the equation $-h^2 u'' + Vu=0$ has linearly independent solutions of the form
\[
u_+ = z^{\nu_+}\tilde{u}_+, \quad u_- = -\frac{1}{2\nu}z^{\nu_-}\tilde{u}_-,
\]
where $\tilde{u}_+, \tilde{u}_-$ are analytic and $\tilde{u}_+(0) = \tilde{u}_-(0) = 1$. The normalizations are chosen so that their Wronskian is one. When $\nu \geq 1$, only $u_+$ is square-integrable near the origin, while both $u_+$ and $u_-$ are square-integrable if $0 < \nu < 1$. This dichotomy corresponds to the fact that a boundary condition at $z=0$ must be imposed when $0 <\nu<1$, but not when $\nu \geq 1$. Different boundary conditions have been considered in the physics literature --- for a classical discussion see \cite{Avis:1978}. In this note we only handle the case of a Dirichlet-like condition, but see \cite{Warnick:2013}, \cite{Bachelot:2008} for two recent works considering a wider range of boundary conditions. 

More precisely, define the minimal operator $P_\mathrm{min}(h)$ with domain $D_\mathrm{min}$ as the closure of the expression $-h^2\DDZ + V$ on $C_c^\infty(J)$. The corresponding maximal operator $P_\mathrm{max}$ is given by the same expression on the domain 
\[ 
D_\mathrm{max}(J) = \left\{u\in L^2(J): u,\,u'\in AC(J),\,-h^2 u'' + Vu \in L^2(J)\right\}.
\]
Since $-h^2 u'' + Vu \in L^2(J)$ is equivalent to $-u'' + (\nu^2-1/4)z^{-2}u \in L^2(J)$ by the boundedness of $W$, this set is independent of $h$. It is well known that $P_\mathrm{min}(h)^* =P_\mathrm{max}(h)$ and $P_\mathrm{max}(h)^* =P_\mathrm{min}(h)$ . The following observations on the structure of the maximal domain are classical:

\begin{lem}
Suppose $u\in D_\mathrm{max}(J)$. Then there exist constants $b_+(u), b_-(u)$ and an absolutely continuous function $\tilde{u}$ with the property that $u=b_+(u)u_+ + b_-(u)u_- + \tilde{u}$. Furthermore, $\tilde{u}$ satisfies
\begin{enumerate}
	\item $\lim_{z\rightarrow 0^+} z^{-1/2}\tilde{u}(z) = 0$ and $\lim_{z\rightarrow 0^+} \tilde{u}'(z) = 0$.
	\item $z^{-1}\tilde{u}$ is square integrable near $z=0$.
	\item $\tilde{u}'$ is square integrable near $z=0$.
\end{enumerate}
\end{lem}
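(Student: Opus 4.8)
The plan is to use that $u\in D_{\mathrm{max}}(J)$ means $u$ solves an inhomogeneous ODE with an $L^2$ right-hand side, and that the associated homogeneous equation has the explicit Frobenius basis $\{u_+,u_-\}$ at the regular singular point $z=0$, with Wronskian normalized to $1$. So first I would set $g:=-h^2u''+Vu\in L^2(J)$ and construct, by variation of parameters against $\{u_+,u_-\}$, a particular solution $\tilde u$ of $-h^2\tilde u''+V\tilde u=g$ of the form
\[
\tilde u(z) \;=\; \frac{1}{h^2}\left( u_+(z)\int_{z_0}^{z} u_-(s)\,g(s)\,ds \;-\; u_-(z)\int_{0}^{z} u_+(s)\,g(s)\,ds\right).
\]
The lower limit of the second integral can always be taken to be $0$, since $u_+=O(z^{1/2+\nu})$ is square-integrable near the origin and $g\in L^2$, so that $\int_0^z u_+g$ converges absolutely by Cauchy--Schwarz. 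For the first integral I would take $z_0=0$ when $0<\nu<1$ (then $u_-=O(z^{1/2-\nu})$ is still square-integrable near $0$, so $\int_0^z u_-g$ converges absolutely) and an interior base point $z_0\in J$ when $\nu\ge 1$; this is the only place where the value of $\nu$ enters the argument.

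Next I would observe that $u-\tilde u$ solves the homogeneous equation $-h^2w''+Vw=0$ on a punctured neighborhood of $0$, hence, since $\{u_+,u_-\}$ is a fundamental system there, equals $b_+u_++b_-u_-$ near $0$ for unique constants $b_\pm=b_\pm(u)$. Redefining $\tilde u:=u-b_+u_+-b_-u_-$ on all of $J$ (continuing $u_\pm$ as solutions of the ODE, which is legitimate because $V$ is smooth on $(0,\infty)$) yields an absolutely continuous function on $J$ that coincides near the origin with the variation-of-parameters formula above. (When $\nu\ge 1$ one automatically has $b_-=0$, since $u_-\notin L^2$ near $0$ while $u$, $u_+$ and $\tilde u$ are, but this is not needed for the statement.) It then remains to read off (1)--(3) from the integral formula for $\tilde u$ near $z=0$.

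For the asymptotics I would write $\epsilon(z):=\|g\|_{L^2(0,z)}$, so $\epsilon(z)\to 0$ as $z\to 0^+$, and apply Cauchy--Schwarz together with $|u_\pm(z)|=O(z^{1/2\pm\nu})$ and $|u_\pm'(z)|=O(z^{-1/2\pm\nu})$. When $0<\nu<1$ this gives $|\int_0^z u_+g|\le C\epsilon(z)z^{1+\nu}$ and $|\int_0^z u_-g|\le C\epsilon(z)z^{1-\nu}$, hence $|\tilde u(z)|\le C\epsilon(z)z^{3/2}$ and $|\tilde u'(z)|\le C\epsilon(z)z^{1/2}$; when $\nu\ge 1$ the term with prefactor $u_+$ is instead estimated via $\|u_-\|_{L^2(z,z_0)}=O(z^{1-\nu})$ (with an extra $\sqrt{\log(1/z)}$ factor at $\nu=1$), giving $\tilde u(z)=O(z^{3/2})$ and $\tilde u'(z)=O(z^{1/2})$ up to that logarithm. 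In all cases $z^{-1/2}\tilde u(z)\to0$ and $\tilde u'(z)\to0$, which is (1), and $z^{-1}\tilde u$ and $\tilde u'$ are $O(z^{1/2})$ up to logarithms near $0$, hence square-integrable there, which is (2) and (3). Absolute continuity of $\tilde u$ on $J$ is immediate from $u,u'\in AC(J)$ and the smoothness of $u_\pm$ on $(0,\infty)$.

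The main obstacle is really only the bookkeeping in the borderline regime $\nu\ge1$, where $u_-$ leaves $L^2$ at the origin: one has to pick the base point in the variation-of-parameters formula so that every integral converges, and then verify that the resulting remainder still decays like $o(z^{1/2})$ and produces the claimed square-integrable quantities, tracking the logarithmic correction at $\nu=1$. Beyond that, the proof is standard Frobenius theory for a regular singular point combined with Cauchy--Schwarz, which is why the statement can be attributed to the classical references \cite{Zettl}, \cite{Weidmann}.
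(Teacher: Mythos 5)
Your proposal is correct and follows essentially the same route as the paper: variation of parameters against the Frobenius basis $u_\pm$ with the base point chosen as $0$ for $0<\nu<1$ and an interior point for $\nu\geq 1$, followed by Cauchy--Schwarz estimates giving $\tilde u=O(z^{3/2})$, $\tilde u'=O(z^{1/2})$ with the logarithmic correction at $\nu=1$. The extra details you supply (the explicit $h^{-2}$ factor, the $\epsilon(z)$ refinement, and the identification of $b_\pm$ via the homogeneous equation) are fine and only make explicit what the paper leaves implicit.
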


\begin{proof}
Since $v=-h^2 u'' + Vu \in L^2(J)$, by variation of parameters we have
\[
u = b_+(u) u_+ + b_-(u) u_- + \tilde{u}
\]
where
\[
\tilde{u} = u_+(z) \int_a^z u_-(t) v(t) dt - u_-(z) \int_0^z u_+(t) v(t) dt.
\]
When $\nu\geq 1$, set $a=1$, and when $0<\nu<1$ set $a=0$. It then follows by Cauchy--Schwarz that
\begin{align*}
\tilde{u}(z) = O\left(z^{3/2}\right), &\quad \tilde{u}'(z) = O\left(z^{1/2}\right),\; \nu > 0, \nu \neq 1,\\
\tilde{u}(z) = O\left(z^{3/2}\log(z)^{1/2}\right),&\quad \tilde{u}'(z) = O\left(z^{1/2} \log(z)^{1/2}\right),\; \nu = 1.
\end{align*}
The properties of $\tilde{u}$ immediately follow. 
\end{proof}

The linear functionals $b_+, b_-$ are referred to as boundary conditions. Since $u\in L^2(J)$, we see that $b_-(u) = 0$ if $\nu  \geq 1$. On the other hand, when $0<\nu<1$, the most general (separated) boundary condition at the origin is of the form
\[
\sin (\theta) b_+(u) + \cos (\theta) b_-(u) = 0, \theta \in [0,\pi). 
\]
In this paper we take the Dirichlet-like boundary condition $b_-(u) = 0$. This is easily seen to be equivalent to
\[
\lim_{z\rightarrow 0}z^{\nu -1/2}u =0.
\]
\begin{rem}
When $\nu=1/2$ the singularity vanishes and we have an ordinary Dirichlet condition; in fact, for all $\nu >0$ this boundary condition corresponds to the Friedrichs extension of $P_\mathrm{min}(h)$ \cite{Zettl} (we will comment on the semiboundedness shortly). 
\end{rem}

Summarizing the above discussion, we have established the following.
\begin{cor} \label{cor:domainstructure}
Suppose $u\in \left\{u\in D_\mathrm{max}(J): \lim_{z\rightarrow 0^{+}} z^{\nu -1/2}u = 0\right\}$. Then
\begin{enumerate}
	\item $\lim_{z\rightarrow 0^+} z^{-1/2}u(z) = 0$.
	\item $\lim_{z\rightarrow 0^+} u(z)u'(z) = 0$.
	\item $z^{-1}u$ is square integrable near $z=0$.
	\item $u'$ is square integrable near $z=0$.
\end{enumerate}
\end{cor}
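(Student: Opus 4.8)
The plan is to treat this as a bookkeeping consequence of the preceding lemma on the structure of $D_\mathrm{max}(J)$ together with the Frobenius normal form of $u_+$. First I would record that the hypothesis $\lim_{z\to 0^+}z^{\nu-1/2}u=0$ is equivalent to $b_-(u)=0$: inserting the decomposition $u=b_+(u)u_+ + b_-(u)u_- + \tilde u$ into $z^{\nu-1/2}u$ and using $u_+=z^{1/2+\nu}\tilde u_+$, $u_-=-\frac{1}{2\nu}z^{1/2-\nu}\tilde u_-$ together with $z^{-1/2}\tilde u\to0$ from the lemma, the $u_+$ term is $O(z^{2\nu})\to0$ and the $\tilde u$ term is $z^{\nu}(z^{-1/2}\tilde u)\to0$ since $\nu>0$, leaving $\lim z^{\nu-1/2}u=-b_-(u)/(2\nu)$. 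So under the hypothesis $u=b_+(u)u_+ + \tilde u$, and it suffices to verify each of (1)--(4) separately for the two summands.

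For $\tilde u$ the four claims are precisely the conclusions of the preceding lemma: $z^{-1/2}\tilde u\to0$, $\tilde u'\to0$ (hence $\tilde u\to0$, using e.g. $\tilde u=O(z^{3/2})$), $z^{-1}\tilde u\in L^2$ near $0$, and $\tilde u'\in L^2$ near $0$. For $u_+$ everything follows from $u_+(z)=z^{1/2+\nu}(1+O(z))$ and $u_+'(z)=(1/2+\nu)z^{\nu-1/2}(1+O(z))$, valid because $\tilde u_+$ is analytic with $\tilde u_+(0)=1$: then $z^{-1/2}u_+=z^{\nu}(1+O(z))\to0$; $u_+u_+'=O(z^{2\nu})\to0$; and $z^{-1}u_+=z^{\nu-1/2}(1+O(z))$ and $u_+'=O(z^{\nu-1/2})$ are both square integrable near $0$ because $\int_0 z^{2\nu-1}\,dz<\infty$ for $\nu>0$. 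The only point needing a word is (2) for the full $u$: expanding $uu'=b_+(u)^2u_+u_+' + b_+(u)(u_+\tilde u' + u_+'\tilde u)+\tilde u\tilde u'$, the cross terms are $O(z^{1/2+\nu})O(z^{1/2})$ and $O(z^{\nu-1/2})O(z^{3/2})$, i.e. $O(z^{1+\nu})\to0$, while $\tilde u\tilde u'\to0$ as above. Combining the two summands gives all four assertions.

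The statement carries essentially no difficulty, but the one place to be careful is the borderline exponent $\nu=1$, where the lemma only provides $\tilde u=O(z^{3/2}(\log z)^{1/2})$ and $\tilde u'=O(z^{1/2}(\log z)^{1/2})$. I would check that the extra $(\log z)^{1/2}$ is harmless throughout: $z^{3/2}(\log z)^{1/2}$ and $z^{1/2}(\log z)^{1/2}$ still tend to $0$; $z|\log z|$ and $z^2|\log z|$ are integrable near $0$, which handles the $L^2$ claims for $z^{-1}\tilde u$ and $\tilde u'$; and the cross terms in (2) pick up only a bounded $(\log z)^{1/2}$ factor relative to the estimates above and still vanish. (The dichotomy $0<\nu<1$ versus $\nu\ge1$ matters only for whether $u_-$ is square integrable, hence for the choice of base point $a$ in the variation-of-parameters formula used in the lemma, not for any of the estimates here.)
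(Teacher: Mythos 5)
Your proof is correct and follows the same route as the paper: the paper's own argument is simply that the hypothesis is equivalent to $b_-(u)=0$, so $u=b_+(u)u_+ + \tilde u$, and the four properties follow from the behavior of $u_+ = z^{\nu+1/2}\tilde u_+$ together with the estimates on $\tilde u$ from the preceding lemma. You have just spelled out the routine estimates (including the cross terms in (2) and the logarithmic corrections at $\nu=1$) that the paper leaves implicit.
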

\begin{proof}
This follows immediately from the previous lemma combined with $b_-(u) = 0$.
\end{proof}

Next we discuss the semiboundedness of $-h^2 \DDZ + V$ on the interval $J$. The natural approach here is to use a weighted Hardy inequality; the use of such inequalities in the study of massive wave equations on Kerr--AdS backgrounds was pioneered in \cite{Holzegel:2009}, \cite{Holzegel:2011}. We can use a version of the classical ``factorization method'' \cite{Hull:1951} to prove such results: given a second order self-adjoint operator $A$, find a (non self-adjoint) first order operator $B$ and a number $\beta$ with the property that $A \geq B^*B + \beta$. See also \cite{Holzegel:2012} for a similar approach in the current context.

\begin{lem} \label{lem:factorization}
Suppose $u\in\left\{ u\in D_\mathrm{max}(J): \lim_{z\rightarrow 0^+} z^{\nu-1/2} u(z) = 0,\, u(c)=0 \right\}$. Let $Y$ be a smooth bounded function with bounded derivative on $J$, satisfying $Y(z) = O(z), Y'(z) = O(1)$ as $z\rightarrow 0^+$.  Then
\[
\lVert h D_z u - i f Y u \rVert^2_{L^2(J,dz)} = \left < h^2 D_z^2 u, u \right >_{L^2(J,dz)} + \left < (fY)^2 u - h f \partial_r(fY)u,u \right >_{L^2(J,dz)}.    
\]
\end{lem}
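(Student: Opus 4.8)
The plan is to recognize the asserted identity as the Lagrange (integration-by-parts) identity attached to the first-order operator $B = hD_z - ifY$, with $fY$ acting by multiplication. Since $h$ is real and $Y$ (hence $fY$) is real-valued, I would first expand pointwise: writing $Bu = -i(hu' + fYu)$ gives
\[
|Bu|^2 = |hu' + fYu|^2 = h^2|u'|^2 + hfY\,(|u|^2)' + (fY)^2|u|^2,
\]
using $u'\bar u + u\bar u' = (|u|^2)'$. Integrating over $J$ reduces everything to two integrations by parts: one turning $h^2\lVert u'\rVert^2$ into $\langle h^2D_z^2 u, u\rangle + h^2[u'\bar u]_0^c$, and one turning $h\int_J fY\,(|u|^2)'\,dz$ into $-h\int_J (fY)'\,|u|^2\,dz + h[fY|u|^2]_0^c$.

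All the substance of the lemma lies in justifying these manipulations and in showing the boundary terms vanish. Convergence of the integrals is no issue: by the structure lemma for $D_\mathrm{max}(J)$ and Corollary~\ref{cor:domainstructure}, $u'$ and $z^{-1}u$ are square integrable near $z = 0$, so $|u'|^2$ is integrable there and, since $V = h^2(\nu^2 - \tfrac14)z^{-2} + W$ with $W$ bounded, so is $u''\bar u = h^{-2}(V|u|^2 - g\bar u)$ where $g = -h^2u'' + Vu \in L^2$ and $z^{-2}|u|^2 = |z^{-1}u|^2 \in L^1$; near $z = c$ integrability follows from $u, u', Vu \in L^2$ together with the decay of $V$ (Lemma~\ref{prop:Vinfinity}). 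For the boundary contribution at $z = c$: this vanishes because $u(c) = 0$ (read as decay when $c = \infty$, which is automatic on the $L^2$ maximal domain).

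The delicate endpoint is $z = 0^+$. First, $u'\bar u \to 0$: for real $u$ this is precisely Corollary~\ref{cor:domainstructure}(2), and for complex $u$ one splits into real and imaginary parts — which lie in the same space, the defining conditions being real-linear — or appeals directly to the decomposition $u = b_+u_+ + \tilde u$ and the decay rates of $\tilde u$ from the structure lemma. Second, $fY|u|^2 \to 0$: by Lemma~\ref{lem:rasymptotics}, $r(z) = z^{-1} + O(z)$, so $f(r(z)) \sim z^{-2}$ and therefore $fY = O(z^{-1})$ by the hypothesis $Y(z) = O(z)$; meanwhile $|u|^2 = o(z)$ because $z^{-1/2}u \to 0$ by Corollary~\ref{cor:domainstructure}(1). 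Hence the product tends to $0$. This is exactly the point where one needs $Y(z) = O(z)$ rather than merely $Y$ bounded, since $f$ blows up like $z^{-2}$ at the origin.

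Assembling the pieces gives
\[
\lVert hD_z u - ifYu\rVert^2 = \langle h^2 D_z^2 u, u\rangle + \langle (fY)^2 u, u\rangle - h\left\langle \tfrac{d}{dz}(fY)\,u, u\right\rangle,
\]
and to finish I would invoke the defining relation $\tfrac{dr}{dz} = -f$ of the Regge--Wheeler coordinate, i.e. $\tfrac{d}{dz} = -f\,\partial_r$, to rewrite the last term with $f\,\partial_r(fY)$ in place of $\tfrac{d}{dz}(fY)$, putting the identity into the stated form. I do not anticipate a genuine obstacle: this is a routine factorization computation, and the only point requiring real care is the vanishing of the boundary term at $z = 0$ — which is precisely what the Frobenius analysis of the regular singular point and the resulting description of $D_\mathrm{max}(J)$ in the preceding lemmas were arranged to supply, the hypotheses on $Y$ and the condition $\lim_{z\to 0^+} z^{\nu-1/2}u = 0$ being calibrated so that $fY|u|^2$ and $u'\bar u$ both vanish there.
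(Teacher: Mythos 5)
Your route is the same as the paper's: the proof given there is exactly the expansion of $\lVert hD_z u - ifYu\rVert^2$ followed by integration by parts, with the boundary contributions at $z=0$ handled via Corollary~\ref{cor:domainstructure} and the one at $z=c$ via $u(c)=0$, and the substitution $\partial_r = -f^{-1}\partial_z$ at the end. The detail work you supply --- square integrability of $u'$ and $z^{-1}u$ near the origin, the extension of Corollary~\ref{cor:domainstructure}(2) to complex $u$, and the vanishing of $fY|u|^2$ at $0$ from $fY = O(z^{-1})$ (Lemma~\ref{lem:rasymptotics} plus $Y(z)=O(z)$) together with $|u|^2 = o(z)$ --- is precisely what the paper leaves implicit, and it is correct.

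The one step you should not wave through is the last one. Your assembled identity has first-order term $-h\langle \tfrac{d}{dz}(fY)u,u\rangle$, and since $\tfrac{d}{dz} = -f\partial_r$ this equals $+h\langle f\partial_r(fY)u,u\rangle$, \emph{not} $-h\langle f\partial_r(fY)u,u\rangle$; so the substitution does not ``put the identity into the stated form'' --- it produces the opposite sign on the $hf\partial_r(fY)$ term. The mismatch is in fact a sign slip in the statement as printed rather than in your computation: the right-hand side as written belongs to $\lVert hD_zu + ifYu\rVert^2$, equivalently to $-Y$ in place of $Y$, and since $Y$ is at our disposal this is harmless downstream (in Lemma~\ref{lem:weightedhardy} one simply takes $Y = -h(r-r_+)f^{-1}/2$). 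But a careful proof must either reproduce the stated sign or flag that it cannot; asserting agreement across a sign flip is exactly the kind of error this factorization identity is meant to police, so you should record the corrected sign (or the corrected choice of $\pm iY$) explicitly.
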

\begin{proof}
Here we are writing $D_z = -i \partial_z$. Integrate by parts and recall that $\partial_r = -f^{-1} \partial_z$. The integration by parts is justified by using Corollary \ref{cor:domainstructure} near $z=0$ and the vanishing of $u$ at $z=c$.
\end{proof} 
The following first appeared in \cite{Holzegel:2011}; we offer an alternative proof.
\begin{lem} [{\cite[Lemma 7.1]{Holzegel:2011}}] \label{lem:weightedhardy} 
Suppose 
\[
u\in\left\{ u\in D_\mathrm{max}(J): \lim_{z\rightarrow 0^+} z^{\nu-1/2} u(z) = 0,\, u(c)=0 \right\}.
\] 
Then
\[
\left < -h^2 \textstyle{\DDZ} u + V_{-1}u , u \right >_{L^2(J,dz)} \geq 0.
\]
\end{lem}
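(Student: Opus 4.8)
The plan is to apply Lemma~\ref{lem:factorization} with a carefully chosen auxiliary function $Y$; since the class of $u$ appearing in Lemma~\ref{lem:weightedhardy} is exactly the hypothesis class of Lemma~\ref{lem:factorization}, rearranging the identity there and using $D_z^2 = -\partial_z^2$ yields
\[
\left\langle \left(-h^2\DDZ + V_{-1}\right)u,\,u\right\rangle_{L^2(J,dz)} = \bigl\lVert hD_z u - ifY u\bigr\rVert_{L^2}^2 + \left\langle \bigl(V_{-1} - (fY)^2 + hf\partial_r(fY)\bigr)u,\,u\right\rangle_{L^2}.
\]
Hence it suffices to exhibit an \emph{admissible} $Y$ — smooth and bounded on $J$ with bounded derivative, and with $Y(z) = O(z)$, $Y'(z) = O(1)$ as $z\to 0^+$ — for which the pointwise expression $V_{-1} - (fY)^2 + hf\partial_r(fY)$ is nonnegative on $J$.

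The essential step is guessing the right $Y$: the term $hf\partial_r(fY)$ simplifies enormously when $fY$ is \emph{affine in $r$}, since then $\partial_r(fY)$ is constant. Accordingly I would take
\[
Y(z) = h\Bigl(\tfrac12-\nu\Bigr)\frac{r(z)-r_+}{f(r(z))},\qquad\text{so that}\qquad fY = h\Bigl(\tfrac12-\nu\Bigr)(r-r_+),\quad \partial_r(fY) = h\Bigl(\tfrac12-\nu\Bigr)
\]
(for $\nu = \tfrac12$ this is just $Y\equiv 0$, and then $V_{-1}\equiv 0$ so the claim is immediate). Substituting, recalling $V_{-1} = h^2(\nu^2-\tfrac14)f$, and using the identity $(\nu^2-\tfrac14) + (\tfrac12-\nu) = (\tfrac12-\nu)^2$, the whole pointwise expression collapses — uniformly in $\nu>0$ — to
\[
V_{-1} - (fY)^2 + hf\partial_r(fY) = h^2\Bigl(\tfrac12-\nu\Bigr)^2\bigl(f(r)-(r-r_+)^2\bigr).
\]
Everything thus reduces to the elementary inequality $f(r)\ge (r-r_+)^2$ for $r\ge r_+$, which I would prove by noting that both sides vanish at $r=r_+$ and that $\frac{d}{dr}\bigl(f(r)-(r-r_+)^2\bigr) = 2r_+ + (d-2)\mu\, r^{-(d-1)} > 0$ for $r>0$. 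Feeding this back into the displayed identity gives $\langle(-h^2\DDZ + V_{-1})u,u\rangle \ge 0$.

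It then remains only to check that $Y$ is admissible, which is routine. Near $z=0$, Lemma~\ref{lem:rasymptotics} gives $r(z) = z^{-1} - z/3 + O(z^2)$, hence $f(r(z)) = z^{-2} + \tfrac13 + O(z)$, so $Y(z) = h(\tfrac12-\nu)z + O(z^2)$, whence $Y = O(z)$ and $Y' = O(1)$; on compact subsets of $(0,\infty)$ there is nothing to check; and as $z\to\infty$ (relevant only when $c=\infty$) one has $r\to r_+$ with $r-r_+$ and $f(r)$ each vanishing to first order, so $Y\to h(\tfrac12-\nu)/f'(r_+)$ is bounded, and differentiating $\tfrac{r-r_+}{f}$ via $\partial_z = -f\partial_r$ shows $Y'$ is bounded as well. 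I do not anticipate a genuine obstacle here: the one inventive move is the affine-in-$r$ ansatz for $fY$, after which both the algebra and the inequality $f(r)\ge(r-r_+)^2$ are elementary. It is worth stressing that the obvious alternative — dominating $V_{-1}$ pointwise by the model centrifugal potential $h^2(\nu^2-\tfrac14)z^{-2}$ and invoking the classical Hardy inequality — does \emph{not} work: that would require $f(r(z))z^2 \le (1-4\nu^2)^{-1}$ on $J$, whereas $f(r(z))z^2 = 1 + z^2/3 + O(z^3)$ exceeds $1$ near $z=0$ while $(1-4\nu^2)^{-1}\to 1$ as $\nu\to 0$; the choice of $Y$ above is precisely what circumvents this.
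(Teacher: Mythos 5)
Your proof is correct and takes essentially the same route as the paper: apply Lemma~\ref{lem:factorization} with $fY$ an affine function of $r$ vanishing at $r_+$, and reduce everything to the elementary inequality $f(r)\ge (r-r_+)^2$ on $(r_+,\infty)$. The only difference is cosmetic --- the paper chooses $Y = h(r-r_+)f^{-1}/2$ and first bounds $V_{-1}\ge -h^2f/4$, whereas your $\nu$-dependent coefficient $h(\tfrac12-\nu)$ makes the pointwise expression collapse exactly; both hinge on the same inequality, proved the same way.
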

\begin{proof}
Recall that $V_{-1} = h^2 (\nu^2 -1/4) f \geq -h^2 f/4$. We therefore want to find $Y$ so that $(fY)^2 - h f \partial_r(fY) \leq -h^2 f/4$. Set $Y = h(r-r_+)f^{-1}/2$. Then $Y(z) = O(z), Y'(z) = O(1)$ as $z\rightarrow 0^+$. Furthermore, 
\[
f^2 Y^2 - h f\partial_r (fY) = \frac{h^2(r^2-2rr_+ + r_+^2)}{4} - \frac{h^2f}{2}.
\]
But it is easy to see that this quantity does not exceed $-\textstyle{\frac{1}{4}}h^2f$ for $z\in(0,\infty)$ or equivalently $r\in (r_+,\infty)$. Indeed, that is equivalent to
\[
r^2 - 2rr_+ + r_+^2 \leq r^2 + 1 - \mu r^{2-d},
\] 
or $-2rr_+ + r_+^2 \leq 1 - \mu r^{2-d}$. But both sides assume the same value of $-r_+^2$ when $r=r_+$, while at the same time the left hand side is decreasing and the right hand side is increasing as $r$ increases. The result follows by an application of Lemma \ref{lem:factorization}.
\end{proof}
Note that this result does not rely on the smallness of $h$. Furthermore, for each $\ell \geq 0$ we clearly have $V_0 + h^2 V_1 > 0$. We thus define $P(h)$ as the operator $-h^2 \DDZ + V$ with domain
\[
\DOM = \left\{u\in D_\mathrm{max}(0,\infty): \lim_{z\rightarrow 0^+} z^{\nu-1/2} u(z) = 0 \right\}.
\]
Then $P(h)$ is self-adjoint and $P(h) \geq 0$. Also define the Bessel operator $L_\nu(h)$ as $-h^2\DDZ + h^2(\nu^2-1/4)z^{-2}$ acting on $\DOM$. It is well known that $L_\nu(h) \geq 0$ (the usual Hardy inequality) and that $\sigma(L_\nu(h)) = \sigma_\mathrm{ess}(L_\nu(h)) = [0,\infty)$ \cite{Everitt:2007}.

\begin{prop} The spectrum of $P(h)$ is purely absolutely continuous and equal to $[0,\infty)$.
\end{prop}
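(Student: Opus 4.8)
The plan is to pin down $\sigma(P(h))=[0,\infty)$ by a relatively compact perturbation argument, then rule out eigenvalues, and finally rule out singular continuous spectrum by analysing the equation at the limit-point endpoint $z=\infty$. For the first point, $P(h)\ge 0$ already gives $\sigma(P(h))\subseteq[0,\infty)$. Writing $P(h)=L_\nu(h)+W$ and recalling from Lemmas~\ref{lem:rasymptotics} and~\ref{prop:Vinfinity} that $W$ is smooth, uniformly bounded, and tends to $0$ as $z\to\infty$ (exponentially when $\nu=1/2$, like an inverse square otherwise), multiplication by $W$ is compact relative to $L_\nu(h)$: one checks that $W(L_\nu(h)+1)^{-1}$ is Hilbert--Schmidt, using $W\in L^2(0,\infty)$ together with the decay of the Green's function of $L_\nu(h)$. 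Weyl's theorem on the essential spectrum then yields $\sigma_{\mathrm{ess}}(P(h))=\sigma_{\mathrm{ess}}(L_\nu(h))=[0,\infty)$, so together with $\sigma(P(h))\subseteq[0,\infty)$ we get $\sigma(P(h))=[0,\infty)$.

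Next I would show $\sigma_p(P(h))=\emptyset$. The point $0$ is not an eigenvalue because $P(h)$ is strictly positive on $\DOM\setminus\{0\}$: if $P(h)u=0$, then decomposing $V=V_{-1}+V_0+h^2V_1$ and applying Lemma~\ref{lem:weightedhardy} to the $V_{-1}$ part gives $0=\langle P(h)u,u\rangle\ge\langle(V_0+h^2V_1)u,u\rangle\ge 0$, and since $V_0+h^2V_1>0$ pointwise this forces $u=0$. For $E>0$, Lemma~\ref{prop:Vinfinity} shows $V$ is exponentially small (hence integrable) near infinity, so $(-h^2\DDZ+V-E)u=0$ has a basis of Jost solutions $u_\pm(z)=e^{\pm i\sqrt{E}z/h}(1+o(1))$ as $z\to\infty$. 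As $z=\infty$ is in the limit-point case, any nonzero solution is a nontrivial linear combination of $u_+$ and $u_-$, hence not square-integrable near infinity, so $E$ is not an eigenvalue. Thus $\sigma_p(P(h))=\emptyset$.

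It remains to exclude singular continuous spectrum on $(0,\infty)$. Here I would again use the exponential decay of $V$ at infinity from Lemma~\ref{prop:Vinfinity} — and it is important to work with $V$ itself rather than with the Bessel-relative perturbation $W$, which is only inverse-square at infinity when $\nu\neq 1/2$. Since the only singular endpoint is $z=\infty$ and $V$ is short-range there, Gilbert--Pearson--Weidmann subordinacy theory applies: for each $E>0$ the two Jost solutions have equal (bounded, oscillatory) size at infinity, so there is no solution subordinate at $\infty$, whence $E$ belongs to the essential support of the absolutely continuous part of $P(h)$ and lies in neither $\sigma_{sc}(P(h))$ nor $\sigma_p(P(h))$. (Equivalently one may invoke the limiting absorption principle for half-line Schr\"odinger operators with a short-range potential at the limit-point end; and for the $\sigma_{ac}$ statement alone, Kato--Rosenblum applied to $P(h)$ and $L_\nu(h)$, whose resolvent difference is trace class because $W\in L^1(0,\infty)\cap L^\infty(0,\infty)$.) Combining the three steps: $\sigma_p(P(h))=\emptyset$ and $\sigma_{sc}(P(h))\subseteq\{0\}$; but a nonzero singular continuous measure cannot be supported on a single point, so $\sigma_{sc}(P(h))=\emptyset$, and therefore $\sigma(P(h))=\sigma_{ac}(P(h))=[0,\infty)$.

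The hard part is this last step. One must check that the subordinacy (or limiting-absorption) machinery really does apply in the presence of the regular singular point at $z=0$: when $0<\nu<1$ that endpoint is limit-circle and the relevant solution there is the one singled out by the Dirichlet-like condition defining $\DOM$, while for $\nu\ge 1$ it is limit-point; in both cases the absolutely continuous spectrum is governed entirely by the behaviour at $z=\infty$, where the exponential decay of $V$ furnishes exactly what is needed — Jost solutions, the continuous boundary values of the Weyl--Titchmarsh $m$-function on $(0,\infty)$ with positive imaginary part, and the absence of a subordinate solution. By contrast, steps one and two are routine once the decay estimates of Lemmas~\ref{lem:rasymptotics} and~\ref{prop:Vinfinity} are in hand: the relative compactness of $W$ and the Jost-solution asymptotics are standard.
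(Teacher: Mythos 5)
Your proposal is correct, and its skeleton coincides with the paper's: positivity of $P(h)$ gives $\sigma(P(h))\subseteq[0,\infty)$, and writing $P(h)=L_\nu(h)+W$ with $W$ bounded and decaying makes $W$ a relatively compact perturbation (via the Hilbert--Schmidt factorization through the Bessel resolvent, exactly as in the proof of Proposition~\ref{prop:relativecompactness}), so Weyl's theorem pins the essential spectrum. The only real difference is in the last step: where the paper simply cites \cite[Theorem 15.3]{Weidmann} for the pure absolute continuity of the nonnegative spectrum, you reprove that result by hand --- Jost solutions at the limit-point endpoint $z=\infty$ (using the uniform exponential decay of $V$ from Lemma~\ref{prop:Vinfinity}, rather than $W$, which is the right choice), exclusion of embedded eigenvalues, Gilbert--Pearson subordinacy (or a limiting absorption argument) to kill the singular continuous part on $(0,\infty)$, and the positivity argument via Lemma~\ref{lem:weightedhardy} together with $V_0+h^2V_1>0$ to rule out an eigenvalue at $0$. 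This buys a self-contained argument that makes explicit how the limit-circle endpoint at $z=0$ (for $0<\nu<1$) is handled by the Dirichlet-like condition while the spectral type is dictated entirely by the short-range behaviour at $z=\infty$ --- precisely the ``one-dimensional techniques'' the paper outsources. Two small caveats, neither fatal: the parenthetical Kato--Rosenblum/trace-class remark would need its own justification (the resolvent difference involves $P(h)$, not just $L_\nu(h)$, so the trace-class claim is not immediate from $W\in L^1\cap L^\infty$), but you only offer it as an alternative for the a.c.\ part; and the application of Lemma~\ref{lem:weightedhardy} on $J=(0,\infty)$ implicitly uses the vanishing of the boundary term at the limit-point end, which is the same convention the paper itself uses to conclude $P(h)\geq 0$.
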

\begin{proof}
We have $\sigma_\mathrm{ess}(P(h)) = [0,\infty)$ since $P(h)$ is a relatively compact perturbation of $L_\nu(h)$, see the proof of Proposition \ref{prop:relativecompactness}. But we also know that $\sigma(P(h)) \subseteq [0,\infty)$ from $P(h)\geq 0$. For a nice proof of the absolute continuity of the nonnegative spectrum using one-dimensional techniques, see \cite[Theorem 15.3]{Weidmann}.
\end{proof}

Next we turn to the construction of the reference operator. Set
\[
\Omega = (0,z_\mathrm{max}].
\]
Define $P^\sharp(h)$ to be the self-adjoint operator $-h^2 \DDZ + V$ with domain
\[
\DOM^\sharp = \left\{u\in D_\mathrm{max}(\Omega): \lim_{z\rightarrow 0^+} z^{\nu-1/2} u(z) = 0, \, u(z_\mathrm{max}) = 0  \right\}.
\]
Correspondingly, define $L^\sharp_\nu(h)$ as $-h^2\DDZ + h^2(\nu^2-1/4)z^{-2}$ acting on $\DOM^\sharp$. It is well known that $L^\sharp_\nu(h)$ has purely discrete spectrum with eigenvectors given by spherical Bessel functions, see Proposition \ref{prop:weyllaw}. The following spectral properties of $P^\sharp(h)$ do not follow from any general theory, again owing to the singular endpoint at $z=0$ --- see \cite[p. 208]{Zettl} for a summary of the possible spectral behavior. 
\begin{prop} The spectrum of $P^\sharp(h)$ is purely discrete. The eigenvalues are all simple and can be arranged as
\[
0 \leq E^\sharp_0(h) < E^\sharp_1(h) < E^\sharp_2(h) < \ldots
\] 
\end{prop}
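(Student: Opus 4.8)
The plan is to reduce the spectral analysis of $P^\sharp(h)$ to the known properties of the Bessel operator $L^\sharp_\nu(h)$ by a relative-compactness argument, exactly parallel to the strategy used for $P(h)$ on the full half-line but now on the bounded interval $\Omega=(0,z_\mathrm{max}]$. First I would observe that $P^\sharp(h)$ is bounded below: indeed, the decomposition $V = V_{-1} + V_0 + h^2 V_1$ together with Lemma~\ref{lem:weightedhardy} (applied with $J=\Omega$, $c=z_\mathrm{max}$, which is legitimate since elements of $\DOM^\sharp$ vanish at $z_\mathrm{max}$) gives $\langle P^\sharp(h)u,u\rangle \geq \langle (V_0 + h^2V_1)u,u\rangle \geq 0$, so $P^\sharp(h)\geq 0$. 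In particular the spectrum is contained in $[0,\infty)$ and is real.

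Next I would show that $P^\sharp(h)$ has compact resolvent. The natural route is to write $P^\sharp(h) = L^\sharp_\nu(h) + W$, where $W = V - h^2(\nu^2-1/4)z^{-2}$ is smooth and bounded on $\Omega$ (by Lemmas~\ref{lem:rasymptotics} and~\ref{prop:Vinfinity}, $W$ is continuous up to $z=0$). Since $L^\sharp_\nu(h)$ has purely discrete spectrum (Proposition~\ref{prop:weyllaw}), its resolvent is compact; adding the bounded perturbation $W$ and using the second resolvent identity shows $(P^\sharp(h)+1)^{-1}$ is compact as well — here one uses that $P^\sharp(h)+1$ is boundedly invertible because $P^\sharp(h)\geq 0$, and that $W(L^\sharp_\nu(h)+1)^{-1}$ is compact because $W$ is bounded and $(L^\sharp_\nu(h)+1)^{-1}$ is compact. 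This should mirror the argument referenced as Proposition~\ref{prop:relativecompactness}. A compact self-adjoint resolvent forces the spectrum of $P^\sharp(h)$ to be purely discrete, consisting of eigenvalues of finite multiplicity accumulating only at $+\infty$, hence arrangeable as a nondecreasing sequence $0\leq E^\sharp_0(h)\leq E^\sharp_1(h)\leq\cdots$.

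It remains to prove that the eigenvalues are \emph{simple}, and this is the step I expect to be the main (if still routine) obstacle, since simplicity is a genuinely one-dimensional phenomenon and does not follow from abstract self-adjointness. The argument is the classical Sturm–Liouville uniqueness at a singular endpoint: if $u_1,u_2$ are two eigenfunctions for the same eigenvalue $E$, both satisfy the second-order ODE $-h^2 w'' + Vw = Ew$ on $(0,z_\mathrm{max})$, so their Wronskian $h^2(u_1u_2' - u_1'u_2)$ is constant on the interval. By Corollary~\ref{cor:domainstructure} applied on $\Omega$ — specifically the facts that $z^{-1/2}u_i(z)\to 0$ and $u_i'$ is square integrable near $0$, which force $u_i(z)u_i'(z)\to 0$ and more precisely control the cross terms $u_i u_j'$ near $z=0$ via the $O(z^{3/2})$, $O(z^{1/2})$ behavior established in the maximal-domain lemma — this constant Wronskian vanishes in the limit $z\to 0^+$, hence is identically zero. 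Therefore $u_1$ and $u_2$ are linearly dependent, so each eigenvalue is simple and the inequalities can be made strict:
\[
0 \leq E^\sharp_0(h) < E^\sharp_1(h) < E^\sharp_2(h) < \cdots.
\]
The one point requiring a little care is verifying that the Wronskian of the two boundary-condition-satisfying solutions really does tend to zero at the singular point when $0<\nu<1$ (when $\nu\geq 1$ square-integrability alone pins down the solution up to scalars near $0$); this is handled precisely by the vanishing of the boundary functional $b_-$ on $\DOM^\sharp$ and the asymptotics $u_+ \sim z^{1/2+\nu}$, $u_+' \sim z^{-1/2+\nu}$, which make the product $u_{+,1}u_{+,2}'$ behave like $z^{2\nu}\to 0$.
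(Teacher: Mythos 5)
Your proposal is correct, and its first half (discreteness) is exactly the paper's argument: write $P^\sharp(h)=L^\sharp_\nu(h)+W$ with $W$ bounded, use the compact resolvent of $L^\sharp_\nu(h)$ and a resolvent identity to conclude $P^\sharp(h)$ has compact resolvent, hence purely discrete spectrum; the lower bound $P^\sharp(h)\ge 0$ via Lemma~\ref{lem:weightedhardy} is also how the paper gets $0\le E^\sharp_0(h)$. Where you genuinely diverge is the simplicity of the eigenvalues: the paper anchors this at the \emph{regular} endpoint $z=z_\mathrm{max}$ --- two eigenfunctions for the same $E$ both vanish there, so their (constant) Wronskian vanishes at $z_\mathrm{max}$ and they are proportional; equivalently, the solution space of the ODE with $u(z_\mathrm{max})=0$ is one-dimensional by Cauchy uniqueness. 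You instead anchor the argument at the \emph{singular} endpoint $z=0$, using $b_-(u)=0$ and the Frobenius asymptotics $u\sim c\,z^{1/2+\nu}$, $u'\sim c(\nu+\tfrac12)z^{\nu-1/2}$ to show the Wronskian tends to $0$ there. This works (eigenfunctions are classical solutions of an equation with the same indicial roots, since $E$ only shifts the analytic part $W$, so the pointwise asymptotics you quote are legitimate; note that Corollary~\ref{cor:domainstructure} by itself gives only $L^2$ control of $u'$, not pointwise decay, though one could also close the argument by integrating the constant Wronskian over $(0,\epsilon)$). The trade-off: the paper's route is shorter and avoids the singular-endpoint analysis entirely, while yours is more robust --- it identifies the eigenspace with multiples of the recessive solution at $z=0$ and would still prove simplicity if the outer endpoint were not regular.
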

\begin{proof}
Since $W$ is bounded and $L^\sharp_\nu(h)$ has compact resolvent, it follows that $P^\sharp(h)$ is again a relatively compact perturbation of $L^\sharp_\nu(h)$, and hence $P^\sharp(h)$ has no essential spectrum. Finally, since $z_\mathrm{max}$ is a regular endpoint, the eigenvalues of $P^\sharp(h)$ are all simple by the usual argument. 
\end{proof}
The corresponding eigenvectors will be denoted $u_n^\sharp(h)$. Using Lemma \ref{lem:factorization} we can show that the spectrum of $P^\sharp(h)$ is separated from the minimum of the potential.

\begin{lem}
There exists $C>0$ and $h_0 > 0$ such that $P^\sharp(h) \geq 1 + C h$ for all $h\in (0,h_0)$.
\end{lem}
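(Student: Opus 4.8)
The plan is to write $P^\sharp(h)$ as a model harmonic oscillator attached to the regular singular point $z=0$, plus a remainder that is bounded below by $1$ up to $O(h^2)$, and to harvest the $O(h)$ gain from the bottom of the oscillator. Fix a small $\epsilon>0$, to be chosen below, and decompose, as quadratic forms on $\DOM^\sharp$,
\[
-h^2\DDZ+V \;=\; \mathcal{O}_\epsilon(h)\;+\;\bigl(W-\epsilon z^2\bigr),
\qquad
\mathcal{O}_\epsilon(h):=-h^2\DDZ+h^2\Bigl(\nu^2-\tfrac14\Bigr)\frac{1}{z^2}+\epsilon z^2,
\]
where $W=V-h^2(\nu^2-\tfrac14)z^{-2}$ is the perturbation introduced in Section~\ref{sect:potential}. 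The proof then rests on two estimates: (i) $W(z;h)-\epsilon z^2\geq 1-Ch^2$ on $\Omega$, and (ii) $\mathcal{O}_\epsilon(h)\geq 2(\nu+1)\sqrt{\epsilon}\,h$ on $\DOM^\sharp$. Granting these, adding them in the quadratic form gives $\langle P^\sharp(h)u,u\rangle\geq\bigl(1+2(\nu+1)\sqrt{\epsilon}\,h-Ch^2\bigr)\lVert u\rVert^2$, and for $h$ below some $h_0$ the term $Ch^2$ is absorbed into the linear term, yielding $P^\sharp(h)\geq 1+(\nu+1)\sqrt{\epsilon}\,h$.

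For (i), recall that $W=V_0+h^2\bigl[(\nu^2-\tfrac14)(f-z^{-2})+V_1\bigr]$; by Lemma~\ref{lem:rasymptotics} the function $f(r(z))-z^{-2}$ is bounded on $\Omega$, and $V_1$ is bounded (Section~\ref{sect:potential}), so $W=V_0+h^2O(1)$ uniformly on $\Omega$. Now $V_0>1$ on $(0,z_\mathrm{max}]$ with $V_0-1=z^2+O(z^3)$ as $z\to0^+$ (Lemma~\ref{prop:Vasymptotics}), so $(V_0-1)/z^2$ extends to a continuous, strictly positive function on the compact interval $[0,z_\mathrm{max}]$ and is therefore bounded below by some $2\epsilon_0>0$. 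Choosing $0<\epsilon\leq\epsilon_0$ gives $V_0-\epsilon z^2\geq 1$ pointwise on $\Omega$, hence $W-\epsilon z^2\geq 1-Ch^2$ there.

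For (ii), observe that the condition $\lim_{z\to0^+}z^{\nu-1/2}u=0$ defining $\DOM^\sharp$ is exactly the Friedrichs condition at $z=0$, while imposing the additional Dirichlet condition at $z_\mathrm{max}$ only raises the bottom of the spectrum. One route is to rescale $z\mapsto(h/\sqrt{\epsilon})^{1/2}z$, reducing the claim to $\inf\sigma\bigl(-\DDZ+(\nu^2-\tfrac14)z^{-2}+z^2\bigr)=2(\nu+1)$ for the Friedrichs realization on $(0,\infty)$, witnessed by the positive ground state $z^{\nu+1/2}e^{-z^2/2}$. Alternatively, and in the spirit of the proof of Lemma~\ref{lem:weightedhardy}, one applies Lemma~\ref{lem:factorization} on $J=(0,z_\mathrm{max})$ with $fY_\epsilon=h(\nu+\tfrac12)z^{-1}-\sqrt{\epsilon}\,z$; this $Y_\epsilon$ is bounded with bounded derivative on $\Omega$ and satisfies $Y_\epsilon=O(z)$, $Y_\epsilon'=O(1)$ as $z\to0^+$, and a direct computation gives $(fY_\epsilon)^2-hf\partial_r(fY_\epsilon)=h^2(\nu^2-\tfrac14)z^{-2}+\epsilon z^2-2(\nu+1)\sqrt{\epsilon}\,h$. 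Hence, for $u\in\DOM^\sharp$,
\[
h^2\lVert u'\rVert^2+\Bigl\langle\bigl(h^2(\nu^2-\tfrac14)z^{-2}+\epsilon z^2\bigr)u,u\Bigr\rangle
=\lVert hD_z u-ifY_\epsilon u\rVert^2+2(\nu+1)\sqrt{\epsilon}\,h\,\lVert u\rVert^2\;\geq\;2(\nu+1)\sqrt{\epsilon}\,h\,\lVert u\rVert^2 .
\]

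The one delicate point is (ii): one must produce the model lower bound for the correct self-adjoint realization, with the Friedrichs condition at $z=0$ and the sharp constant $2(\nu+1)\sqrt{\epsilon}\,h$ rather than a smaller one, and check that the integrations by parts behind Lemma~\ref{lem:factorization} are legitimate in spite of the inverse-square singularity; both points rely on $\nu>0$ together with the decay of maximal-domain elements recorded in Corollary~\ref{cor:domainstructure} (which in particular makes $(fY_\epsilon)^2|u|^2$ integrable near $z=0$). Everything else — selecting $\epsilon$ and $h_0$ and absorbing the $O(h^2)$ remainder into the $O(h)$ gain — is routine.
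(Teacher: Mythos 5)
Your proof is correct, and it takes a route that differs from the paper's in a substantive way, even though both ultimately rest on the factorization identity of Lemma~\ref{lem:factorization}. The paper makes a single application of that lemma with an $r$-dependent multiplier, $fY=-\delta r^{-1}+h\,r/2$ with $\delta=(1-2/d)^{1/2}$: the conditions $Y_0^2\leq V_0-1$, $Y_1^2-f\partial_rY_1\leq -f/4$ absorb the physical and centrifugal parts (the latter exactly as in Lemma~\ref{lem:weightedhardy}, via $V_{-1}\geq -h^2f/4$), and the $O(h)$ gain comes from the cross term $2Y_0Y_1\leq-2\delta$, yielding the explicit constant $C=2(1-2/d)^{1/2}$ depending only on the dimension. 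You instead peel off an exact spiked harmonic oscillator: you split $V$ as $h^2(\nu^2-\tfrac14)z^{-2}+\epsilon z^2$ plus a remainder, bound the remainder pointwise below by $1-Ch^2$ using that $(V_0-1)/z^2$ extends continuously and positively to $[0,z_{\mathrm{max}}]$ (which holds since $r\geq r_{\mathrm{max}}$ on $\Omega$, so $r^{d-2}\geq \mu d/2>\mu$, together with $V_0-1=z^2+O(z^3)$), and extract the $O(h)$ gain from the ground-state energy $2(\nu+1)\sqrt{\epsilon}\,h$ of the model, verified either by rescaling to the known spectrum of the paper's $\widetilde P(h)$ or, more safely, by the factorization with $fY_\epsilon=h(\nu+\tfrac12)z^{-1}-\sqrt{\epsilon}\,z$ — your algebra $(fY_\epsilon)^2-hf\partial_r(fY_\epsilon)=h^2(\nu^2-\tfrac14)z^{-2}+\epsilon z^2-2(\nu+1)\sqrt{\epsilon}\,h$ checks out, $Y_\epsilon$ meets the hypotheses of Lemma~\ref{lem:factorization}, and Corollary~\ref{cor:domainstructure} justifies the splitting of the quadratic form and the integrability of the singular terms, as you note. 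What each approach buys: the paper's multiplier is purely algebraic in $r$, needs no information about the shape of the well, and gives a constant independent of $\mu$ and $\nu$; yours is closer in spirit to the harmonic approximation of Proposition~\ref{thm:h1/2} (it explains where the true constant $2$ should come from), at the price of a constant $(\nu+1)\sqrt{\epsilon}$ with $\epsilon$ tied to $\min_{\Omega}(V_0-1)/z^2$, hence dependent on the black hole parameters. Neither argument recovers the optimal $C=2$ noted in the paper's subsequent remark.
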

\begin{proof}
Writing $Y = Y_0 f^{-1} + h Y_1 f^{-1}$ and collecting powers of $h$, it suffices to find $Y_0$ and $Y_1$ satisfying
\[
Y_0^2 \leq V_0 - 1, \quad 2Y_0 Y_1 - f \partial_{r} Y_0 \leq - D , \quad Y_1^2 - f \partial_{r} Y_1 \leq -f/4
\]
on $\Omega$, for some $D>0$. We would then have
\[
(1+Dh) \|u\|^2_{L^2(\Omega)} \leq \langle (-h^2 \textstyle{\DDZ} + V_{-1} + V_0 )u,u \rangle_{L^2(\Omega)},
\] 
and the result would follow since $V_1$ is bounded. So let $\delta = \left(1-\frac{2}{d}\right)^{1/2}$, and set $Y_0 = -\delta r^{-1}$ and $Y_1 = r/2$. An easy calculation shows that $Y_0^2 \leq r^{-2} f - 1$ and $Y_1^2 - f \partial_r Y_1 \leq -f/4$ for $r \geq r_\mathrm{max} = \left(\frac{\mu d}{2}\right)^{\frac{1}{d-2}}$. Finally, compute
\[
2 Y_0 Y_1 - f \partial_r Y_0 = -2 \delta - \delta r^{-2} + \delta \mu r^{-d} \leq -2\delta
\]
for $r\geq r_\mathrm{max}$, and set $D = 2\delta$.
\end{proof}

\begin{rem} The simple choice of $Y$ above is sufficient to show that the first eigenvalue is separated from the minimum of the potential, but the value of $C$ given in the proof is not optimal. Later we will give a full asymptotic expansion for the first eigenvalue which shows that $C=2$ is the correct value; it is likely that a more refined choice of $Y$ could recover this value.
\end{rem}

Later we will need   

\begin{lem} \label{lem:W(h)}
There exists $h_0>0$ such that $P^\sharp(h) \geq L^\sharp_\nu(h)$ for all $h \in (0,h_0)$.
\end{lem}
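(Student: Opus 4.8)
\smallskip
\noindent\emph{Sketch of the argument.}
The plan is to reduce the operator inequality to a pointwise statement about a multiplication operator. Since $L^\sharp_\nu(h)$ is defined on the same domain $\DOM^\sharp$ as $P^\sharp(h)$, and the two expressions differ by multiplication by
\[
W(z;h) = V(z;h) - h^2\left(\nu^2-\tfrac14\right)z^{-2},
\]
which is bounded, one has $\langle P^\sharp(h)u,u\rangle - \langle L^\sharp_\nu(h)u,u\rangle = \int_\Omega W(z;h)\,|u(z)|^2\,dz$ for every $u\in\DOM^\sharp$ (both pairings are finite because $P^\sharp(h)u,\,L^\sharp_\nu(h)u\in L^2$). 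Hence it suffices to show that $W(\,\cdot\,;h)\geq 0$ on $\Omega$ once $h$ is small enough.

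To this end I would use the decomposition $V = V_{-1}+V_0+h^2V_1$ together with $V_{-1}=h^2(\nu^2-\tfrac14)f$ to rewrite
\[
W(z;h) = V_0(z) + h^2 V_1(z) + h^2\left(\nu^2-\tfrac14\right)\bigl(f(r(z))-z^{-2}\bigr),
\]
and then bound the three terms on $\Omega=(0,z_\mathrm{max}]$. For the first term, $V_0\geq 1$ on $\Omega$: by the lemma on the physical potential $V_0$ has no extremum in $(0,z_\mathrm{max})$ and a local maximum at $z_\mathrm{max}$, so $V_0$ increases monotonically from $V_0(0^+)=1$. The second term is $O(h^2)$ uniformly on $\Omega$ because $V_1$ is bounded. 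For the third term, the key point is that $f(r(z))-z^{-2}$ is bounded on $\Omega$ even though $f(r(z))$ and $z^{-2}$ each blow up as $z\to0^+$: by Lemma \ref{lem:rasymptotics} (equivalently Lemma \ref{prop:Vasymptotics}) one has $f(r(z))-z^{-2}=\tfrac13+O(z)$ as $z\to0^+$, and the difference is continuous on the rest of $\Omega$, so this term is also $O(h^2)$ uniformly. Combining, $W(z;h)\geq 1-Ch^2$ on $\Omega$ for some $C$ independent of $h$; choosing $h_0$ with $Ch_0^2<1$ gives $W\geq 0$ on $\Omega$ for $h\in(0,h_0)$, and the claimed inequality follows from the displayed identity for $\langle P^\sharp(h)u,u\rangle-\langle L^\sharp_\nu(h)u,u\rangle$.

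There is essentially no serious obstacle here; the only point that needs a little care is the uniform-in-$h$ control of $W$ up to the singular endpoint $z=0$, which rests entirely on the cancellation of the singularities of $f(r(z))$ and $z^{-2}$ already recorded in the asymptotic lemmas. It is worth noting that when $0<\nu<\tfrac12$ the centrifugal contribution $h^2(\nu^2-\tfrac14)(f(r(z))-z^{-2})$ is negative near $z=0$, so positivity of $W$ genuinely relies on the smallness of $h$ together with the bound $V_0\geq 1$, rather than on any sign of $f(r(z))-z^{-2}$; this is consistent with the statement being asserted only for $h\in(0,h_0)$.
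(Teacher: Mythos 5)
Your argument is correct, and it reaches the same reduction as the paper --- since $P^\sharp(h)$ and $L^\sharp_\nu(h)$ share the domain $\DOM^\sharp$ and differ by multiplication by the bounded function $W$, it suffices to show $W(\cdot;h)\geq 0$ on $\Omega$ for small $h$ --- but you establish the pointwise bound by a genuinely different route. The paper splits into the cases $0<\nu<1/2$ and $\nu\geq 1/2$: in the first it locates the zero $z_0(h)=O(h)$ of $V$ by the method of Lemma \ref{lem:turningpoints} and argues separately on $(0,z_0(h)]$ (where $W=1+O(h^2)$) and on $(z_0(h),z_\mathrm{max}]$ (where $V>0>h^2(\nu^2-\tfrac14)z^{-2}$); in the second it locates the minimum point $z_1(h)$ of $V$ and compares monotonicities of $V$ and of the centrifugal term on the two sides. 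You instead use the decomposition $W=V_0+h^2V_1+h^2(\nu^2-\tfrac14)\bigl(f(r(z))-z^{-2}\bigr)$, the monotonicity statement $V_0\geq 1$ on $\Omega$ (which follows from the extremum lemma, or directly from $r^{d-2}\geq \mu d/2>\mu$ on $\Omega$), the boundedness of $V_1$, and the cancellation $f(r(z))-z^{-2}=\tfrac13+O(z)$ from Lemma \ref{lem:rasymptotics}, giving the uniform quantitative bound $W\geq 1-Ch^2$ with no case distinction in $\nu$. What your version buys is a sign-of-$\nu^2-\tfrac14$--independent, one-line-quantified argument (and even a lower bound $W\geq\tfrac12$ for small $h$, slightly stronger than what is needed); what the paper's version illustrates is the same turning-point machinery ($z_0(h)$, $z_1(h)$) used elsewhere in the article. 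Both are complete; the only point you should keep explicit, as you do, is that the form comparison is legitimate because the difference of the two operators is a bounded multiplication operator on the common domain.
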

\begin{proof}
It suffices to show that $V(z;h)> h^2(\nu^2-1/4)z^{-2}$ on $\Omega$. First suppose $0 < \nu < 1/2$. Let $z_0(h)$ denote the solution to $V(z;h)=0$. By the method of Lemma \ref{lem:turningpoints}, it is easy to see that $z_0(h)=O(h)$ and hence on $(0,z_0(h)]$ we have $V(z;h) = h^2(\nu^2-1/4)z^{-2} + 1 + O(h^2) >h^2(\nu^2-1/4)z^{-2}$. On the other hand, $V(z;h)$ satisfies $V(z;h) > 0 > h^2(\nu^2-1/4)z^{-2}$ on $(z_0(h),z_\mathrm{max}]$. 

In the case when $\nu\geq 1/2$, let $z_1(h)$ denote the point where the minimum of $V(z;h)$ on $\Omega$ is attained. Then again we have $z_1(h) = O(h^{1/2})$ if $\nu>1/2$ or $z_1(h) = 0$ if $\nu = 1/2$. We then see that $V(z;h) = h^2(\nu^2-1/4)z^{-2} + 1 + O(h)>h^2(\nu^2-1/4)z^{-2}$ on $(0,z_\mathrm{min}(h)]$, while on the complement $V'(z;h)>0$ and $\left(h^2(\nu^2-1/4)z^{-2}\right)' < 0$ so that $V(z;h) > h^2(\nu^2-1/4)z^{-2}$. Hence in all cases we have $V(z;h)> h^2(\nu^2-1/4)z^{-2}$ on $\Omega$.
\end{proof}

Now we define a \emph{model operator} $\widetilde{P}(h)$ which locally near the origin resembles the reference operator. Let $\widetilde{P}(h)$ denote the operator 
\[
\widetilde{P}(h) = -h^2\DDZ + h^2(\nu^2-1/4)z^{-2} + z^2 + 1
\] 
on $L^2(0,\infty)$ with domain
\[
\widetilde{\DOM} = \left\{u\in \widetilde{D}_\mathrm{max}(0,\infty): \lim_{z\rightarrow 0^+} z^{\nu-1/2} u(z) = 0, \right\}.
\]
The maximal domain for $\widetilde{P}(h)$ is defined here as
\[ 
\widetilde{D}_\mathrm{max}(J) = \left\{u\in L^2(J): u,\,u'\in AC(J),\,-h^2 u'' + h^2(\nu^2-1/4)z^{-2}u + z^2u \in L^2(J)\right\}.
\]
\begin{rem}
The domain $\widetilde{\DOM}$ is independent of $h$. This is because one can show that $u\in \widetilde{D}_\mathrm{max}(0,\infty)$ actually implies $z^2 u \in L^2(0,\infty)$.
\end{rem}
\begin{rem} When $\nu$ is a nonnegative integer, $\widetilde{P}(h)$ is just the radial part of the isotropic harmonic oscillator in two dimensions, corresponding to the spherical harmonic indexed by $\nu$. 
\end{rem}

By a harmonic approximation we will identify the bottom of the spectrum $\sigma(P^\sharp(h))$ by comparing it to $\sigma(\widetilde{P}(h))$. An integration by parts for $u\in \widetilde{D}$ shows that
\[
0\leq \left\langle h u' + \left(z-\textstyle{\frac{h}{2z}}\right)u,\, hu' + \left(z-\textstyle{\frac{h}{2z}}\right)u \right\rangle = \left\langle -h^2 u'' + \left(-\textstyle{\frac{1}{4z^2}} + z^2 -2h\right)u,u \right\rangle,
\]
so that $\widetilde{P}(h) \geq 1 + 2h$. In fact the spectrum is explicitly known.
\begin{prop} The spectrum of $\widetilde{P}(h)$ is purely discrete. The eigenvalues are all simple and can be arranged as
\[
1+2h \geq \TE_0(h) < \TE_1(h) < \TE_2(h) < \ldots
\] 
Moreover the eigenvalues are given by
\[
\TE_n(h) = 1+2\left(2n+1+\nu\right)h ,\\
\]
and the normalized eigenvectors are given by 
\[
\widetilde{u}_n(z;h) = h^{-1/4}\widetilde{u}_n(h^{-1/2}z;1)
\]
where
\[
\widetilde{u}_n(z;1) =  \sqrt{\frac{2\,\Gamma(n+1+\nu)}{n!\,\Gamma(1+\nu)^2}}z^{\nu+\frac{1}{2}}e^{-\frac{z^2}{2}}{_1}F_1(-n,1+\nu,z^2).
\]
\end{prop}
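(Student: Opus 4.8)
The plan is to strip off the parameter $h$ by a dilation, prove discreteness and simplicity of the spectrum by soft functional-analytic arguments, and then identify all the eigenpairs explicitly via Kummer's confluent hypergeometric equation and classical identities for Laguerre functions. Introduce the unitary dilation $(U_h v)(z) = h^{-1/4} v(h^{-1/2}z)$ on $L^2(0,\infty)$; a direct computation from the formula for $\widetilde P(h)$ gives $U_h^{-1}\widetilde P(h) U_h = 1 + h\,\widetilde P_0$, where $\widetilde P_0 = -\frac{d^2}{dy^2} + (\nu^2-\tfrac14)y^{-2} + y^2$ on the domain $\{u \in L^2(0,\infty) : u, u' \in AC,\ -u'' + (\nu^2-\tfrac14)y^{-2}u + y^2 u \in L^2,\ \lim_{y\to 0^+}y^{\nu-1/2}u(y)=0\}$, and $U_h$ carries $\widetilde{\DOM}$ onto this domain. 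Hence it is enough to show that $\widetilde P_0$ has purely discrete, simple spectrum $\{\lambda_n\}_{n\ge 0}$ with $\lambda_n = 2(2n+1+\nu)$ and $L^2$-normalized eigenfunctions $\widetilde u_n(\cdot;1)$ as displayed; then $\TE_n(h) = 1 + h\lambda_n = 1 + 2(2n+1+\nu)h$ and $\widetilde u_n(\cdot;h) = U_h \widetilde u_n(\cdot;1)$ is normalized because $U_h$ is unitary. (In particular $\TE_0(h) = 1+2(1+\nu)h \ge 1 + 2h$, consistent with the a priori lower bound obtained above, since $\nu>0$.)

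For discreteness I would estimate the quadratic form $q_0[u] = \|u'\|^2 + (\nu^2-\tfrac14)\|y^{-1}u\|^2 + \|yu\|^2$ from below. Splitting $q_0[u] = \varepsilon\|u'\|^2 + \big((1-\varepsilon)\|u'\|^2 + (\nu^2-\tfrac14)\|y^{-1}u\|^2\big) + \|yu\|^2$ and using the Hardy inequality $\|u'\|^2 \ge \tfrac14\|y^{-1}u\|^2$, the middle bracket is nonnegative as soon as $\varepsilon \le 4\nu^2$; so one takes $\varepsilon = 1$ when $\nu \ge \tfrac12$ and $\varepsilon = 4\nu^2$ when $0<\nu<\tfrac12$, and concludes $q_0[u] \ge \min(\varepsilon,1)\big(\|u'\|^2 + \|yu\|^2\big)$. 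Thus the form domain of $\widetilde P_0$ embeds continuously in $\{u : u', yu \in L^2(0,\infty)\}$, which embeds compactly in $L^2(0,\infty)$ by the standard Rellich-type argument for a confining potential; hence $\widetilde P_0$ has compact resolvent and discrete spectrum. Simplicity of the eigenvalues is the usual Sturm--Liouville observation: an eigenfunction solves a second-order linear ODE and is determined up to a scalar by the boundary condition at $z=0$, so every eigenspace is one-dimensional.

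To solve $\widetilde P_0 u = \lambda u$ explicitly, substitute $u(y) = y^{\nu+1/2} e^{-y^2/2} w(y^2)$ and set $t = y^2$; the equation becomes Kummer's equation $t w'' + (1+\nu - t)w' - a w = 0$ with $a = \tfrac{1+\nu}{2} - \tfrac{\lambda}{4}$. Its solution regular at $t=0$ is $w = {}_1F_1(a, 1+\nu, t)$, while the second solution $\sim t^{-\nu}$ at the origin reproduces the $u_-$-branch and is excluded by the boundary condition $\lim_{y\to0^+}y^{\nu-1/2}u = 0$ (and is not $L^2$ near $0$ when $\nu\ge 1$). Therefore $\lambda$ is an eigenvalue precisely when $y^{\nu+1/2}e^{-y^2/2}\,{}_1F_1(a,1+\nu,y^2)$ is square-integrable at infinity; since ${}_1F_1(a,b,t) \sim \tfrac{\Gamma(b)}{\Gamma(a)}e^t t^{a-b}$ as $t\to +\infty$ unless $a = -n$ is a nonpositive integer, in which case ${}_1F_1(-n,1+\nu,t) = \tfrac{n!\,\Gamma(1+\nu)}{\Gamma(n+1+\nu)}L_n^{(\nu)}(t)$ is a polynomial, the eigenvalues are exactly $\lambda_n = 2(2n+1+\nu)$, $n\ge 0$. (Completeness of this list may alternatively be read off from the classical fact that $\{y^{\nu+1/2}e^{-y^2/2}L_n^{(\nu)}(y^2)\}_{n\ge 0}$ is an orthogonal basis of $L^2(0,\infty)$ for $\nu > -1$.) The normalization constant then follows from $\int_0^\infty t^\nu e^{-t}L_n^{(\nu)}(t)^2\,dt = \Gamma(n+1+\nu)/n!$ after the change of variable $t = z^2$, giving exactly the stated prefactor $\sqrt{2\,\Gamma(n+1+\nu)/(n!\,\Gamma(1+\nu)^2)}$.

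The dilation reduction, the compactness estimate, and the Sturm--Liouville simplicity are all routine; the one step with genuine content is the completeness of the eigenvalue list, i.e. the growth asymptotics of ${}_1F_1$ at infinity (equivalently, completeness of the Laguerre system), which is where I would expect to spend the care.
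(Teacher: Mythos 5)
Your proposal is correct, and it supplies a complete argument where the paper gives essentially none: the paper simply states the proposition, records the factorization bound $\widetilde{P}(h)\geq 1+2h$ just before it, and refers to Hall--Saad \cite{Hall:2000} and the classical Kummer/Laguerre facts for the explicit eigenvalues and eigenfunctions. Your route is the natural self-contained one, and it meshes with the paper: the dilation you use is exactly the conjugation $U(h)\widetilde{P}(h)U(h)^{-1}=1+hQ_0$ that the paper introduces later in Section \ref{sect:expansion}, and your reduction of the eigenvalue ODE to Kummer's equation with $a=\frac{1+\nu}{2}-\frac{\lambda}{4}$, the exclusion of the $t^{-\nu}$ branch by the boundary condition, the quantization $a=-n$ from the $e^{t}$ growth of ${}_1F_1$, and the normalization via $\int_0^\infty t^\nu e^{-t}L_n^{(\nu)}(t)^2\,dt=\Gamma(n+1+\nu)/n!$ all check out and reproduce the stated constant. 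Two small remarks. First, your quadratic-form/Rellich argument for discreteness tacitly identifies the operator on the stated domain (the $b_-=0$ condition) with the Friedrichs extension whose form domain you estimate; this is exactly the point the paper disposes of in its earlier remark citing \cite{Zettl}, so it is consistent, and alternatively discreteness can be read off directly from the completeness of the Laguerre system $\{y^{\nu+1/2}e^{-y^2/2}L_n^{(\nu)}(y^2)\}$ together with self-adjointness, as you note parenthetically --- making the compactness estimate dispensable. Second, your parenthetical $\widetilde{E}_0(h)=1+2(1+\nu)h\geq 1+2h$ implicitly corrects what is evidently a typo in the displayed statement (the inequality $1+2h\geq \widetilde{E}_0(h)$ should be $\leq$, matching the factorization bound), which is the right reading. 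You also correctly identify the only step with real content, namely the completeness of the eigenvalue list via the large-$t$ asymptotics of ${}_1F_1$ (equivalently Laguerre completeness); the paper delegates precisely this to the cited literature.
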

Also see \cite{Hall:2000} for a detailed discussion of this operator in the context of ``spiked hamonic oscillators''. Here ${_1}F_1(a,b,y)$ is the confluent hypergeometric function; since $n$ is an integer, ${_1}F_1(-n,1+\nu,y)$ is just polynomial of degree $n$, proportional to the Laguerre polynomial $L^{(\nu)}_{n}(y)$.

\subsection{Agmon estimates} \label{sect:agmon}
The strategy for producing exponentially accurate quasimodes for $P(h)$ is to truncate an eigenfunction $u^\sharp(h)$ of $P^\sharp(h)$ through multiplication by a cutoff function $\chi$ and then extend $\chi u^\sharp(h)$ by zero as an element of $\DOM$. If $u^\sharp(h)$ and its derivative are exponentially small in $L^2$ on the support of $\chi'$ then $\chi u^\sharp(h)$ will be an exponentially accurate quasimode for $P(h)$. For certain energy levels below the maximum of $V_0$ there is a classically forbidden region where we can use Agmon-type estimates to obtain exponential decay for $u^\sharp(h)$. It then remains to choose $\chi$ with derivative supported in this region.   

Suppose $\phi\in C^\infty(\Omega)$ and $f\in \DOM^\sharp$. Then $e^{-{\phi/h}} f\in \DOM^\sharp$ and for any $E$, integration by parts gives
\begin{multline} \label{eq:agmonintbyparts}
\mathrm{Re}\,\left <e^{\phi/h} \left( -h^2\textstyle{\DDZ} + V - E\right) e^{-{\phi/h}} f,f \right>_{L^2(\Omega)} \\ = \left<\left(-h^2\textstyle{\DDZ} + V-E-\left(\phi'\right)^2\right)f,f\right>_{L^2(\Omega)}.
\end{multline}

\begin{lem} 
Suppose $\phi \in C^\infty(\Omega)$, $u\in \DOM^\sharp$ and $\chi \in C_c^\infty(\Omega)$. Then 
\begin{multline} \label{eq:agmoncommutator}
\Re \left <e^{\phi/h}(P^\sharp(h)- E)\chi u,e^{\phi/h}\chi u\right > = \Re \left< e^{\phi/h} \chi (P^\sharp(h)-E)u,e^{\phi/h}\chi u\right>  \\ + h^2\left< u, e^{2\phi/h} \left((\chi')^2 + 2 h^{-1} \phi' \chi \chi'\right)u \right>. 
\end{multline}
\end{lem}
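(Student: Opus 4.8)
The plan is to reduce the identity to a commutator computation involving only $-h^2\DDZ$, since the four operators $V$, $E$, $\chi$ and $e^{\phi/h}$ are all multiplication operators and hence commute with one another. First I would note that both sides of \eqref{eq:agmoncommutator} are real and that $e^{\phi/h}$ is a real (hence self-adjoint) multiplication operator, so the two copies of $e^{\phi/h}$ appearing in each leading inner product may be merged into a single factor $e^{2\phi/h}$ placed in the left slot. Subtracting the first term on the right-hand side from the left-hand side then leaves
\[
\Re\left\langle e^{2\phi/h}\big[(P^\sharp(h)-E)\chi - \chi(P^\sharp(h)-E)\big]u,\,\chi u\right\rangle = \Re\left\langle e^{2\phi/h}\,[-h^2\DDZ,\chi]u,\,\chi u\right\rangle,
\]
because the $E$-terms and the $V$-terms in the commutator cancel. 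Here $[-h^2\DDZ,\chi]u = -h^2\chi'' u - 2h^2\chi' u'$, which is legitimate: $u\in\DOM^\sharp\subset D_\mathrm{max}(\Omega)$ gives $u,u'\in AC(\Omega)$, and $\chi u\in\DOM^\sharp$ as well — it vanishes near the origin since $\chi\in C_c^\infty(\Omega)$ with $\Omega=(0,z_\mathrm{max}]$, and at $z_\mathrm{max}$ since $u(z_\mathrm{max})=0$ — so $P^\sharp(h)\chi u$ genuinely equals $(-h^2\DDZ+V)\chi u$.

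Next I would expand the right-hand integral over $(0,z_\mathrm{max})$. The $\chi''$-term contributes $-h^2\int e^{2\phi/h}\chi''\chi|u|^2\,dz$ at once, while in the $\chi'u'$-term I use $\Re(u'\bar u)=\tfrac12(|u|^2)'$ to rewrite it as $-h^2\int e^{2\phi/h}\chi'\chi\,(|u|^2)'\,dz$. The single real step is an integration by parts in this last integral; using $(e^{2\phi/h}\chi'\chi)' = e^{2\phi/h}\big(2h^{-1}\phi'\chi'\chi + \chi''\chi + (\chi')^2\big)$, the $\chi''\chi$ contributions cancel and one is left with exactly $h^2\langle u,\,e^{2\phi/h}((\chi')^2 + 2h^{-1}\phi'\chi\chi')u\rangle$, the claimed remainder.

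There is no serious obstacle here beyond bookkeeping; the one point needing attention is the justification of the integration by parts, i.e. the vanishing of the boundary contributions. The boundary terms at $z=0$ vanish because $\chi$ and all of its derivatives are identically zero near the origin, and the term at $z_\mathrm{max}$ vanishes because $u(z_\mathrm{max})=0$; all functions involved are continuous on the compact support of $\chi$, so the integration by parts is entirely classical. This is precisely what the hypotheses $\chi\in C_c^\infty(\Omega)$ and $u\in\DOM^\sharp$ supply, together with the regularity afforded by $D_\mathrm{max}(\Omega)$ that was already exploited in \eqref{eq:agmonintbyparts}. Alternatively, one could deduce the identity by applying \eqref{eq:agmonintbyparts} to $f = e^{\phi/h}\chi u\in\DOM^\sharp$, since the left-hand side of \eqref{eq:agmoncommutator} equals $\Re\langle e^{\phi/h}(-h^2\DDZ+V-E)e^{-\phi/h}f,f\rangle$ for this $f$, and then expanding $-h^2(e^{\phi/h}\chi u)''$; but the direct commutator computation is shorter.
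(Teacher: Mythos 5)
Your proposal is correct and follows essentially the same route as the paper: isolate the commutator $[-h^2\frac{d^2}{dz^2},\chi]u=-h^2\chi''u-2h^2\chi'u'$ (the $V$ and $E$ terms cancelling since they are multiplication operators), take real parts, and perform a single integration by parts against $e^{2\phi/h}\chi$, with boundary terms killed by the compact support of $\chi$ near $z=0$ and the Dirichlet condition $u(z_\mathrm{max})=0$. Your write-up merely makes explicit the cancellation of the $\chi''\chi$ terms and the justification of $\chi u\in\DOM^\sharp$, which the paper leaves implicit.
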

\begin{proof}
We have 
\begin{multline*}
\left <e^{\phi/h}(P^\sharp(h) - E)\chi u,e^{\phi/h}\chi u\right > = \left< e^{\phi/h} \chi (P^\sharp(h)-E)u,e^{\phi/h}\chi u\right> \\ + \left< [\textstyle{-h^2\DDZ},\chi]u, e^{2\phi/h}\chi u \right>.
\end{multline*}
Taking real parts and integrating by parts the second term on the right hand side gives
\begin{align*}
\Re \left< [\textstyle{-h^2\DDZ},\chi]u, e^{2\phi/h}\chi u \right> &= \Re h^2 \left < -\chi'' u - 2\chi' u', e^{2\phi/h}\chi u\right > \\
&= h^2 \left< u, e^{2\phi/h} \left((\chi')^2 + 2 h^{-1} \phi' \chi \chi'\right)u \right>.
\end{align*} 
\end{proof}

For $E$ real set 
\[
\Omega^-(E) = (0,z_A(E)],\qquad \Omega^+(E) = (z_A(E),z_\mathrm{max}].
\]
Then $\Omega^+(E)$ corresponds to the classically forbidden region inside of $\Omega$ in the sense that $V_0 > E$ on $\Omega^+(E)$. 

The following two results show that if $E(h)<1+Th$ for some $T>0$ then any solution to $P^\sharp(h) u = E(h) u$ has the property that $\exp(z^2/ch) u$ is controlled by $u$ in $L^2(\Omega)$ for some $c>0$. The key here is that $z_A(1+Th) = O(h^{1/2})$, and hence $\exp(z^2/c)u$ is trivially controlled by $u$ on $(0,z_A(1+Th)]$, despite the fact that we are in the classically allowed region.

\begin{lem} \label{lem:parabola}
Let $T>0,\delta >0$. Then there exists $k > 0, h_0>0$ depending on $T,\delta$ such that 
\[
V_0(z) - (1+Th) - kz^2 > \frac{3\delta}{2} h 
\]  
for $z\in \Omega^+(1+(T+2\delta)h)$ and $h\in (0,h_0)$.
\end{lem}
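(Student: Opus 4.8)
The plan is to compare $V_0$ with the parabola $1+z^2$ near the origin, exploiting that on $\Omega^+(1+(T+2\delta)h)$ one has $V_0(z)-1>(T+2\delta)h$: this leaves a surplus of $2\delta h$ above the target level $1+Th$, and the idea is to pick $k$ small enough that $kz^2$ consumes strictly less than half of that surplus, while treating $z$ bounded away from $0$ by a cruder estimate.

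First I would record two elementary facts about the physical potential. From Lemma~\ref{lem:rasymptotics}, $r(z)^{-2}=z^2+O(z^4)$ and $r(z)^{-d}=O(z^d)$ as $z\to 0^+$, so $V_0(z)=1+z^2+O(z^3)$; hence there is a fixed $z_*\in(0,z_\mathrm{max})$ with
\[
z^2\le 2\bigl(V_0(z)-1\bigr),\qquad z\in(0,z_*].
\]
Second, since $\frac{d}{dr}V_0=-2r^{-3}+\mu d\,r^{-(d+1)}<0$ for $r>r_\mathrm{max}$ and $z\mapsto r(z)$ is decreasing, $V_0$ is strictly increasing on $(0,z_\mathrm{max}]$; in particular $V_0(z)\ge V_0(z_*)=:1+\eta$ with $\eta>0$ for $z\in[z_*,z_\mathrm{max}]$.

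Then I would split $\Omega^+(E)$, with $E=1+(T+2\delta)h$, into $(z_A(E),z_*]$ and $[z_*,z_\mathrm{max}]$; by Lemma~\ref{lem:turningpoints}, $z_A(E)=(T+2\delta)^{1/2}h^{1/2}+O(h)$, so $z_A(E)<z_*$ once $h$ is below some $h_0$. On the inner piece, membership in $\Omega^+(E)$ gives $V_0(z)-1>(T+2\delta)h$, which combined with $z^2\le 2(V_0(z)-1)$ yields
\[
V_0(z)-(1+Th)-kz^2\ge(1-2k)\bigl(V_0(z)-1\bigr)-Th\ge\bigl(2\delta-2k(T+2\delta)\bigr)h
\]
provided $2k<1$; choosing $k<\delta/\bigl(4(T+2\delta)\bigr)$ makes the right-hand side exceed $\tfrac32\delta h$. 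On the outer piece, $V_0(z)-(1+Th)-kz^2\ge\eta-Th-kz_\mathrm{max}^2$, which exceeds $\tfrac32\delta h$ once $k<\eta/(2z_\mathrm{max}^2)$ and $h_0$ is further shrunk. Taking $k$ to be the smaller of the two thresholds finishes the argument.

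I do not anticipate a genuine obstacle. The only delicate point is that near $z_A(E)$ the difference $V_0(z)-E$ is arbitrarily small, so one cannot simply bound it below by $kz^2$; the remedy, already built into the estimate above, is to measure $kz^2$ against a fixed fraction of $V_0(z)-1$ (which is comparable to $z^2$ near the origin) rather than against the vanishing gap, so that the fixed surplus $2\delta h$ coming from the extra energy room does the work.
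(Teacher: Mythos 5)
Your proof is correct. It shares the paper's overall skeleton — split $\Omega^+(1+(T+2\delta)h)$ into a piece near the origin and a piece bounded away from it, take $k$ of size roughly $\delta/(4(T+2\delta))$ on the inner piece, and dispose of the outer piece by a crude bound after shrinking $k$ and $h_0$ — but the mechanism on the inner piece is genuinely different. The paper evaluates $M(z;h)=V_0(z)-(1+Th)-kz^2$ \emph{at the turning point} $z_A(1+(T+2\delta)h)$, using the asymptotics $z_A(1+Sh)=S^{1/2}h^{1/2}+O(h)$ from Lemma~\ref{lem:turningpoints} to get $M(z_A)=2\delta h-\tfrac{\delta}{4}h+O(h^{3/2})$, and then propagates the bound to the right by showing $M'(z_A)>0$ and $M''\geq 0$ on a fixed interval $[0,A]$, so that $M$ is increasing there. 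You instead prove the inequality pointwise on $(z_A,z_*]$ in one stroke: the comparability $z^2\leq 2\bigl(V_0(z)-1\bigr)$ near $0$ lets you absorb $kz^2$ into a fixed fraction of $V_0(z)-1$, and the defining property of $\Omega^+$, namely $V_0(z)-1>(T+2\delta)h$, supplies the surplus $2\delta h$ of which the choice $k<\delta/\bigl(4(T+2\delta)\bigr)$ consumes less than $\delta/2$. What your route buys is economy: you need neither the turning-point expansion of Lemma~\ref{lem:turningpoints} nor any first- or second-derivative information about $M$ — only the quadratic lower bound on $V_0-1$ near the origin and the monotonicity of $V_0$ on $(0,z_{\mathrm{max}}]$ (the latter you use only for the outer bound $V_0\geq 1+\eta$, and implicitly for $V_0>E$ on $\Omega^+$). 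The paper's version is the natural one given that the turning-point asymptotics had already been established and are used elsewhere, but your argument is more elementary and yields the same quantitative threshold for $k$.
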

\begin{proof}
Recall that $z_{A}(1+Th) = z'_{A}(T)h^{1/2} + O(h)$ with $z'_{A}(T) = T^{1/2}$. Set
\[
k = \frac{\delta}{4 z'_{A}(T+2\delta)^2} = \frac{\delta}{4(T+2\delta)}.
\]
and then define $M(z;h) = V_0(z) - (1+Th) - kz^2$. Recalling that $V_0(z) = 1 + z^2 + O(z^3)$, we can see that 
\[
M(z_{A}(1+(T+2\delta)h);h) = 2\delta h  - \frac{\delta}{4}h + O(h^{3/2}) > \frac{3\delta}{2} h  
\]
for $h$ small and $\epsilon$ small but fixed. We also see that $k < 1/8$. Thus we have, for example,
\[
M'(z_A(1+(T+2\delta)h);h) > \frac{3}{2}T^{1/2} h^{1/2} > 0.
\]
But $M''(z;h) \geq 0$ on an interval $[0,A]$ with $A > 0$ independent of $h$. Thus we can conclude that $M(z;h) > \frac{3\delta}{2}h$ on $[z_A(1+(T+2\delta)h),A]$ since $M$ is increasing there. Conversely, on $(A, z_\mathrm{max}]$ a much stronger inequality holds, namely $M(z;h) > C$ for some $C> 0$ by further shrinking $k$ if necessary.
\end{proof}

\begin{prop} \label{thm:pointdecay}
Let $T>0$. There exist constants $h_0 > 0,\, C>0$, and $c>0$ depending on $T$ such that
\[
\| \exp\left(\textstyle{\frac{z^2}{ch}}\right) \, u \|_{L^2(\Omega)} \leq C \left(\|u\|_{L^2(\Omega)} + h^{-1}\|\exp\left(\textstyle{\frac{z^2}{ch}}\right)(P^\sharp(h)-E(h))u\|_{L^2(\Omega)}\right),
\]
for all $h\in(0,h_0)$, $u\in \DOM^\sharp$ and $E(h)$ satisfying $E(h) < 1+T h$.
\end{prop}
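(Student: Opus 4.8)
The plan is to run a weighted energy (Agmon) estimate with the Gaussian weight $\phi(z)=z^2/c$, the constant $c>0$ to be fixed large. Since $e^{\phi/h}$ is smooth, bounded and has bounded derivatives on $\Omega=(0,z_\mathrm{max}]$, multiplication by it preserves $\DOM^\sharp$ (both the maximal domain and the boundary conditions at $z=0$ and at $z=z_\mathrm{max}$), so $w:=e^{\phi/h}u\in\DOM^\sharp$ whenever $u\in\DOM^\sharp$. Applying the integration-by-parts identity \eqref{eq:agmonintbyparts} with $f=w$ (so that $e^{-\phi/h}f=u$) gives
\[
\Re\left\langle e^{\phi/h}(P^\sharp(h)-E(h))u,\,w\right\rangle_{L^2(\Omega)} = \left\langle\left(-h^2\DDZ + V - E(h) - (\phi')^2\right)w,\,w\right\rangle_{L^2(\Omega)}.
\]
Decompose $V=V_{-1}+V_0+h^2V_1$. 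By the weighted Hardy inequality (Lemma \ref{lem:weightedhardy}), $\left\langle\left(-h^2\DDZ+V_{-1}\right)w,w\right\rangle_{L^2(\Omega)}\geq 0$ — this is exactly the ingredient that lets all values $\nu>0$ be treated at once — so discarding it leaves
\[
\Re\left\langle e^{\phi/h}(P^\sharp(h)-E(h))u,\,w\right\rangle_{L^2(\Omega)} \geq \left\langle\left(V_0 + h^2V_1 - E(h) - (\phi')^2\right)w,\,w\right\rangle_{L^2(\Omega)}.
\]

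Next I would estimate the right-hand side region by region, as anticipated in the discussion before Lemma \ref{lem:parabola}. Fix $\delta=T$ and let $k,h_0$ be produced by Lemma \ref{lem:parabola}; then choose $c\geq 2k^{-1/2}$, so $(\phi')^2=4z^2/c^2\leq kz^2$ on $\Omega$. On the forbidden region $\Omega^+(1+3Th)$, using $E(h)<1+Th$, the bound $(\phi')^2\leq kz^2$, and uniform boundedness of $V_1$,
\[
V_0 + h^2V_1 - E(h) - (\phi')^2 \geq \big(V_0 - (1+Th) - kz^2\big) - Ch^2 > \tfrac{3T}{2}h - Ch^2 \geq Th
\]
for $h$ small, by Lemma \ref{lem:parabola}. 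On the complement $\Omega^-(1+3Th)=(0,z_A(1+3Th)]$ one has $z=O(h^{1/2})$ by Lemma \ref{lem:turningpoints}, hence $(\phi')^2=O(h)$, $h^2V_1=O(h^2)$, and $V_0-E(h)>V_0-(1+Th)=z^2+O(z^3)-Th\geq -C'h$ (using $V_0=1+z^2+O(z^3)$ near $0$), so the same expression is $\geq -C''h$ there; moreover the weight is bounded on this short interval, $e^{2\phi/h}=e^{2z^2/ch}=O(1)$, so $\|w\|_{L^2(\Omega^-(1+3Th))}\leq C\|u\|_{L^2(\Omega)}$. Combining the two pieces and using $\|w\|^2_{L^2(\Omega^+(1+3Th))}\geq\|w\|^2_{L^2(\Omega)}-\|w\|^2_{L^2(\Omega^-(1+3Th))}$,
\[
\Re\left\langle e^{\phi/h}(P^\sharp(h)-E(h))u,\,w\right\rangle_{L^2(\Omega)} \geq Th\,\|w\|^2_{L^2(\Omega)} - C_1 h\,\|u\|^2_{L^2(\Omega)}.
\]

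Finally I would absorb the left-hand side by Cauchy--Schwarz, $\Re\langle e^{\phi/h}(P^\sharp(h)-E(h))u,w\rangle\leq\tfrac{Th}{2}\|w\|^2+\tfrac{1}{2Th}\|e^{\phi/h}(P^\sharp(h)-E(h))u\|^2$, which after rearranging yields $\tfrac{Th}{2}\|w\|^2\leq\tfrac{1}{2Th}\|e^{\phi/h}(P^\sharp(h)-E(h))u\|^2+C_1h\|u\|^2$; dividing through and taking square roots gives the stated estimate, with $c$ and the constants depending only on $T$ through Lemma \ref{lem:parabola}. I expect the only genuinely delicate point to be the region $\Omega^-(1+3Th)$: there we are still in the classically allowed region, so $(\phi')^2$ is not dominated by $V_0-E(h)$ and the weighted identity provides no positivity; the estimate is saved purely by the fact that this interval has length $O(h^{1/2})$ (Lemma \ref{lem:turningpoints}), on which $e^{z^2/ch}$ is uniformly bounded so that this ``bad'' region contributes only $O(h)\|u\|^2$. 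Everything else — domain preservation for $w$, the choice of $c$ against the barrier constant $k$, and the final Cauchy--Schwarz absorption — is routine.
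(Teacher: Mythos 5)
Your argument is correct, and it reaches the estimate by a route that is organized differently from the paper's. The paper does not apply the weighted identity to $e^{\phi/h}u$ globally; it first localizes with a cutoff $\chi(z)=\eta(h^{-1/2}z)$ supported in the forbidden region $\Omega^+(1+(T+2\delta)h)$, applies \eqref{eq:agmonintbyparts} to $f=e^{\phi/h}\chi u$ together with the commutator identity \eqref{eq:agmoncommutator}, and then controls the commutator term by exactly the observation you use for the allowed region: on $\supp\chi'\subset h^{1/2}[Z_1,Z_2]$ the weight $e^{\phi/h}$ is $O(1)$ and $\phi'=O(h^{1/2})$; the near-origin piece $\|e^{\phi/h}u\|_{L^2(0,Z_2h^{1/2}]}\leq C\|u\|$ is added back at the end. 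You instead skip the cutoff entirely, apply the identity with $f=e^{\phi/h}u$, invoke Lemma \ref{lem:weightedhardy} for $w=e^{\phi/h}u$, and split the sign of $V_0+h^2V_1-E-(\phi')^2$ by region, absorbing the classically allowed region $(0,z_A(1+3Th)]$ purely through the $O(1)$ bound on the weight there. The two proofs rest on the same three ingredients (Lemma \ref{lem:parabola} with $(\phi')^2\leq kz^2$, the $h^{1/2}$ scale of the turning point from Lemma \ref{lem:turningpoints}, and the weighted Hardy inequality for the centrifugal term), so the difference is bookkeeping rather than substance; what your version buys is the avoidance of the commutator lemma, while what it costs is the need to check that multiplication by $e^{\phi/h}$ preserves $\DOM^\sharp$ — legitimate here because $\Omega$ is bounded, $\phi'$ vanishes at $0$, and Corollary \ref{cor:domainstructure} gives $u'\in L^2$ near the singular endpoint, but this is precisely the point the paper's cutoff sidesteps (since $\chi u$ vanishes near $z=0$), and it is also why the paper's analogous Lemma \ref{lem:Qpointdecay} on the unbounded interval must work with the regularized weight $\phi_\alpha=\phi/(1+\alpha\phi)$; your final Cauchy--Schwarz absorption with constant $Th/2$ yields the stated $h^{-1}$ loss exactly as in the paper.
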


\begin{proof}
Fix an arbitrary $\delta>0$. By Lemma $\ref{lem:parabola}$, we can choose $c>0$ so that if $\phi(z) = z^2/c$, then
\begin{equation} \label{eq:agmonpositivity}
V_0 + h^2 V_1 - (1+Th) - (\phi')^2 > \delta h,
\end{equation}
on $\Omega^+(1+(T+2\delta)h)$. Now fix a small $\varepsilon > 0$ and for ease of notation write 
\[
Z_1 = z'_{A}(T+2\delta)+\varepsilon, \quad Z_2 = z'_{A}(T+2\delta)+2\varepsilon
\]
Let $\eta$ be a smooth cutoff function with uniformly bounded derivative so that $\eta \equiv 0$ on $[0,Z_1]$ and $\eta \equiv 1$ on $[Z_2, h^{-1/2}z_{\mathrm{max}}]$. Set $\chi(z;h) = \eta(h^{-1/2}z;h)$. Then $\supp \chi$ is contained in $\Omega^+(1+(T+2\delta)h)$. Now apply Equation \eqref{eq:agmonintbyparts} with $f = e^{\phi/h}\chi u$ and \eqref{eq:agmoncommutator}. By the inequality \eqref{eq:agmonpositivity} and Lemma \ref{lem:weightedhardy},  along with Cauchy-Schwarz on the term involving $(P^\sharp(h)-E(h))u$, we obtain
\begin{multline*}
\delta h \, \| e^{\phi/h}\, \chi\, u\|^2_{L^2(\Omega)} \leq h^2 \left<u,\left((\chi')^2 + 2\, h^{-1}{\phi'}\, \chi'\, \chi \right)e^{2\phi/h}\,u\right>_{L^2(\Omega)}  \\ + \|e^{\phi/h}\chi(P^\sharp(h)-E(h))u\|_{L^2(\Omega)}\, \|e^{\phi/h}\chi u\|_{L^2(\Omega)} .
\end{multline*}
This inequality is of the form $\delta h p \leq r + p^{1/2}q^{1/2}$ which implies $\delta^2 h^2 p \leq 2\delta hr + q$. Thus
\begin{multline*}
\| e^{\phi/h}\, \chi\, u\|^2_{L^2(\Omega)} \leq 2\delta^{-1} h \left<u,\left((\chi')^2 + 2\, h^{-1}{\phi'}\, \chi'\, \chi \right)e^{2\phi/h}\,u\right>_{L^2(\Omega)}  \\ + (\delta h)^{-2} \|e^{\phi/h}(P^\sharp(h)-E(h))u\|^2_{L^2(\Omega)}.
\end{multline*} 
But
\[
\sup_{\Omega} |\chi'| = O(h^{-1/2}),
\]
and since $\supp \chi' = h^{1/2}[Z_1,Z_2]$, we see that
\[
\sup_{\supp \chi'} \exp(\phi/h) = O(1), \qquad \sup_{\supp \chi'} |\phi'| = O(h^{1/2}).
\]
Thus
\[
\| e^{\phi/h} \, u \|_{L^2(Z_2 h^{1/2}, z_\mathrm{max}]} \leq C_1 \|u\|_{L^2(\Omega)} + C_2 h^{-1} \|e^{\phi/h}(P^\sharp(h)-E(h))u\|_{L^2(\Omega)}.
\]
The final result now follows since  
\[
\|e^{\phi/h} \, u \|_{L^2(0,Z_2 h^{1/2}]} \leq C_3  \|u\|_{L^2(\Omega)}.
\]

\end{proof}

For the next proposition, fix an $S>0$. We will use the notation $\Sigma_i,\, i= 1,2$ to denote an interval of the form $\Sigma_i = (A_i, z_\mathrm{max}]$, where $z_A(1+S) < A_2 < A_1$. We then have $\Sigma_1 \Subset \Sigma_2 \Subset \Omega^+(1+S)$ with respect to the topology on $\Omega$. 

\begin{prop} \label{thm:hexpdecay}
Let $S > 0$ satisfy $1+S < V_0(z_\mathrm{max})$. There exist constants $h_0>0,\, C>0$, and $\epsilon>0$ depending on $S$, such that  
\[
\| u \|_{L^2(\Sigma_1)} \leq C \left(e^{-\epsilon/h} \|u\|_{L^2(\Sigma_2)}+\|(P^\sharp(h) - E(h))u \|_{L^2(\Sigma_2)}\right),
\]
for all $h\in (0,h_0)$, $u\in \DOM^\sharp$, and each $E(h)$ satisfying $E(h) < 1+S$.
\end{prop}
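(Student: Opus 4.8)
The plan is to run a standard Agmon-type argument using an $h$-independent weight $\phi$, exploiting the fact that $V_0 > E$ with a fixed positive gap on the forbidden region $\Omega^+(1+S)$. Since $1+S < V_0(z_{\mathrm{max}})$ and $V_0$ has no other critical point, on any interval $\Sigma_2 = (A_2, z_{\mathrm{max}}]$ with $z_A(1+S) < A_2$ we have $V_0 - (1+S) \geq 2\delta > 0$ for some fixed $\delta$. First I would fix a smooth $\phi \in C^\infty(\Omega)$ with $\phi' = 0$ on $\Omega^-(1+S)$ (in particular near the origin, so the hypotheses of Lemma~\ref{lem:weightedhardy} and the integration-by-parts identities \eqref{eq:agmonintbyparts}--\eqref{eq:agmoncommutator} apply), and with $(\phi')^2 < \delta$ everywhere; then on $\Sigma_2$ one has $V_0 + h^2 V_1 - E(h) - (\phi')^2 \geq \delta > 0$ for $h$ small and any $E(h) < 1+S$. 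Choosing $\phi$ to additionally be increasing on $\Sigma_2$ and normalized so that $\phi$ is bounded below by some $\epsilon > 0$ on $\Sigma_1$ while $\phi$ is bounded above, say by $\epsilon/2$, on a collar just outside $\Sigma_2$ where the cutoff transitions, gives the exponential gain.

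Next I would pick a cutoff $\chi \in C_c^\infty(\Omega)$ with $\chi \equiv 1$ on $\Sigma_1$ and $\supp\chi \subset \Sigma_2$, so $\supp\chi'$ lies in the region where $\phi \leq \epsilon/2$ (shrinking $A_1$ toward $A_2$ if necessary). Apply the identity \eqref{eq:agmonintbyparts} with $f = e^{\phi/h}\chi u$ together with the commutator identity \eqref{eq:agmoncommutator}; the left-hand side is controlled by Cauchy--Schwarz in terms of $\|e^{\phi/h}\chi(P^\sharp(h)-E(h))u\|$ and $\|e^{\phi/h}\chi u\|$, while the right-hand side produces the positive term $\geq \delta\|e^{\phi/h}\chi u\|^2$ (using Lemma~\ref{lem:weightedhardy} to discard the centrifugal contribution $V_{-1}$) minus the commutator error $h^2\langle u, e^{2\phi/h}((\chi')^2 + 2h^{-1}\phi'\chi\chi')u\rangle$. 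On $\supp\chi'$ the weight $e^{2\phi/h}$ is $O(e^{\epsilon/h})$ and $\phi', \chi'$ are $O(1)$, so this error is $O(h\, e^{\epsilon/h}\|u\|_{L^2(\Sigma_2)}^2)$. Rearranging the resulting inequality of the form $\delta p \leq C h e^{\epsilon/h} R^2 + p^{1/2} q^{1/2}$ (absorbing $p^{1/2}q^{1/2}$) yields
\[
\|e^{\phi/h} u\|_{L^2(\Sigma_1)}^2 \leq \|e^{\phi/h}\chi u\|_{L^2(\Omega)}^2 \leq C' h\, e^{\epsilon/h}\|u\|_{L^2(\Sigma_2)}^2 + C'\delta^{-2}\|e^{\phi/h}(P^\sharp(h)-E(h))u\|_{L^2(\Sigma_2)}^2.
\]
Finally, since $\phi \geq \epsilon$ on $\Sigma_1$ and $\phi$ is bounded above on all of $\Sigma_2$, dividing through by $e^{2\epsilon/h}$ and taking square roots converts this into
\[
\|u\|_{L^2(\Sigma_1)} \leq C\left(e^{-\epsilon/h}\|u\|_{L^2(\Sigma_2)} + \|(P^\sharp(h)-E(h))u\|_{L^2(\Sigma_2)}\right),
\]
after relabeling $\epsilon$ (the prefactor $h^{1/2}$ is absorbed, and one uses that the $L^2(\Omega)$ norm of $\chi(P^\sharp(h)-E(h))u$ is bounded by the $L^2(\Sigma_2)$ norm). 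This is the claimed estimate.

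The main obstacle is bookkeeping rather than conceptual: one must choose $\phi$ and the nested intervals so that simultaneously (i) $(\phi')^2$ stays below the spectral gap $\delta$ on all of $\Sigma_2$ so the Agmon positivity survives, (ii) $\phi'$ vanishes near $z=0$ so the weighted Hardy inequality Lemma~\ref{lem:weightedhardy} and the integration by parts remain valid (the singular endpoint is the only delicate point), and (iii) the jump of $\phi$ across $\supp\chi'$ is strictly smaller than its minimum on $\Sigma_1$, which is what produces the net exponential smallness $e^{-\epsilon/h}$ rather than merely $O(1)$. A monotone, slowly-growing $\phi$ built by integrating a small bump supported on $\Sigma_2 \setminus \Omega^-(1+S)$ does the job; the gap $\delta$ is fixed and $h$-independent here (unlike in Proposition~\ref{thm:pointdecay}, where it degenerates like $h$), which is precisely why the decay rate is a fixed $e^{-\epsilon/h}$.
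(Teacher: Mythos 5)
Your strategy is the same as the paper's: an Agmon estimate on $\Omega$ with an $h$-independent weight, the identities \eqref{eq:agmonintbyparts}--\eqref{eq:agmoncommutator}, Lemma~\ref{lem:weightedhardy} to discard $V_{-1}$, Cauchy--Schwarz and absorption, with the $O(1)$ gap $V_0-(1+S)\geq 2\delta$ on $\Sigma_2$ supplying the fixed rate $e^{-\epsilon/h}$. That part is fine. But one step, as you have written it, does not go through: in the final division you only assume $\phi$ is increasing, $\phi\geq\epsilon$ on $\Sigma_1$, and ``$\phi$ is bounded above on all of $\Sigma_2$.'' After dividing by $e^{2\epsilon/h}$, the term $\delta^{-2}e^{-2\epsilon/h}\|e^{\phi/h}(P^\sharp(h)-E(h))u\|^2_{L^2(\Sigma_2)}$ is only controlled by
\[
\delta^{-2}\,e^{2(\sup_{\supp\chi}\phi-\epsilon)/h}\,\|(P^\sharp(h)-E(h))u\|^2_{L^2(\Sigma_2)},
\]
which is exponentially \emph{large} unless $\sup_{\supp\chi}\phi\leq\inf_{\Sigma_1}\phi$. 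Since $\Sigma_1\subset\{\chi=1\}$ and your $\phi$ is monotone toward $z_{\mathrm{max}}$, this forces $\phi$ to have already reached its maximal value $\epsilon$ at the left edge of $\Sigma_1$ and to be constant there; ``a bump supported on $\Sigma_2\setminus\Omega^-(1+S)$'' (which is all of $\Sigma_2$) does not guarantee this, and no relabeling of $\epsilon$ can repair a factor $e^{c/h}$ in front of the $\|(P^\sharp(h)-E(h))u\|$ term, which the statement requires to carry an $h$-independent constant. Your condition (iii) addresses only the commutator error versus the left-hand side, not this term.

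The fix is exactly the paper's choice of weight: take $\phi=\epsilon\chi_1$ with $\chi_1\equiv1$ on $\Sigma_1$ and $\supp\chi_1\subset\Sigma_2$, so that $\sup\phi=\epsilon=\phi|_{\Sigma_1}$ automatically, and choose the outer cutoff $\chi_2$ with $\chi_2'$ supported where $\chi_1\equiv0$, so the commutator term carries no exponential weight at all (it is $O(h\|u\|^2_{L^2(\Sigma_2)})$ rather than your $O(h\,e^{\epsilon/h}\|u\|^2)$; your version still works, but only because of your separate requirement $\phi\leq\epsilon/2$ on $\supp\chi'$). With $\phi$ constrained to plateau at $\epsilon$ on $\Sigma_1$ and $(\phi')^2<\delta$, your argument becomes the paper's proof essentially verbatim. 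One further small point: the vanishing of $\phi'$ near $z=0$ is not what makes Lemma~\ref{lem:weightedhardy} applicable --- what matters is that $e^{\phi/h}\chi u$ lies in the admissible class, which holds because $\chi$ vanishes near the origin --- so that hypothesis is harmless but not the delicate issue.
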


\begin{proof}
For $\delta>0$ small enough, we may assume that $\Sigma_2 \Subset \Omega^+(1+(S+2\delta))$. Choose a smooth cutoff  $\chi_1$ so that $\chi_1\equiv 1$ on $\Sigma_1$ and $\supp \chi_1 \subseteq \Sigma_2$. Then choose $\chi_2$ with $\chi_2\equiv 1$ on $\supp \chi_1$ and $\supp \chi_2 \subseteq \Sigma_2$. Then we can find $\epsilon$ such that if $\phi(z)= \epsilon \chi_1$ then 
\[
\delta < V_0 + h^2 V_1 - (1+S) - (\phi')^2.
\]
on $\Sigma_2$. 
Now proceed as in the previous proposition, again using Equations \eqref{eq:agmonintbyparts}, \eqref{eq:agmoncommutator}, Lemma \ref{lem:weightedhardy}, and Cauchy--Schwarz, to obtain
\begin{multline*}
\delta \|e^{\phi/h} \chi_2 u\|^2_{L^2(\Omega)} \leq h^2 \left<u,\left((\chi_2')^2 + 2\, h^{-1}\chi_1'\, \chi_2'\, \chi_2 \right)e^{2\phi/h}\,u\right>_{L^2(\Omega)}\\ + \|e^{\phi/h}\chi_2 (P^\sharp(h) - E(h))u \|_{L^2(\Omega)}\, \|e^{\phi/h} \chi_2 u\|_{L^2(\Omega)}.
\end{multline*}
Arguing as in the previous proposition and using that $\chi_1 \equiv 0$ on $\supp \chi_2'$, we get that 
\[
e^{\epsilon/h}\, \| u\|_{L^2(\Sigma_1)} \leq C_1 h \|u\|_{L^2(\Sigma_2)} + C_2 e^{\epsilon/h} \| (P^\sharp(h) - E(h))u \|_{L^2(\Sigma_2)}.
\]
Multiplying through by $e^{-\epsilon/h}$ gives the desired result.
\end{proof}

We can combine this result with a standard rescaled elliptic estimate \cite[Chapter 7]{Zworski:2012}, using the Dirichlet boundary condition at $z= z_\mathrm{max}$.

\begin{cor} \label{cor:sobolev}
With the same hypotheses as above,
\[
\| u \|_{H^2_h(\Sigma_1)} \leq C \left(e^{-\epsilon/h} \|u\|_{L^2(\Sigma_2)}+\|(P^\sharp(h) - E(h))u \|_{L^2(\Sigma_2)}\right).
\]
\end{cor}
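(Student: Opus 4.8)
The goal of Corollary~\ref{cor:sobolev} is to upgrade the $L^2$ bound of Proposition~\ref{thm:hexpdecay} to a bound in the semiclassical Sobolev norm $H^2_h$. The plan is to combine the $L^2$ estimate already in hand with a local (interior plus boundary) semiclassical elliptic estimate for the operator $P^\sharp(h)$, which is a small perturbation of $-h^2\frac{d^2}{dz^2}$.

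First I would introduce an intermediate interval $\Sigma_{3/2} = (A_{3/2}, z_\mathrm{max}]$ with $A_2 < A_{3/2} < A_1$, so that $\Sigma_1 \Subset \Sigma_{3/2} \Subset \Sigma_2$ in the topology of $\Omega$. The point of $\Sigma_{3/2}$ is that it stays uniformly away from the singular endpoint $z=0$ and from the left endpoint $A_2$ of $\Sigma_2$, but still contains the Dirichlet endpoint $z=z_\mathrm{max}$. On $\Sigma_{3/2}$ the potential $V(z;h)$ is smooth and bounded uniformly in $h$ (Lemma~\ref{prop:Vinfinity} and the structure of $V$ away from the origin), so $P^\sharp(h)$ genuinely looks like $-h^2\frac{d^2}{dz^2}$ plus a bounded zeroth-order term there.

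Next I would invoke the rescaled elliptic estimate of \cite[Chapter 7]{Zworski:2012}: for the operator $-h^2\frac{d^2}{dz^2} + V$ on an interval, with a Dirichlet condition imposed at the regular endpoint $z=z_\mathrm{max}$, one has
\[
\| u \|_{H^2_h(\Sigma_1)} \leq C\left(\| (P^\sharp(h) - E(h))u \|_{L^2(\Sigma_{3/2})} + \| u \|_{L^2(\Sigma_{3/2})}\right)
\]
for $h$ small, $u\in\DOM^\sharp$, and $E(h)$ bounded (which it is, since $E(h) < 1+S$). The interior part is the standard semiclassical elliptic estimate obtained by commuting with a cutoff supported in $\Sigma_{3/2}$ and equal to $1$ on $\Sigma_1$, integrating by parts twice, and absorbing lower-order terms by Cauchy--Schwarz with an appropriate power of $h$; the boundary contribution at $z=z_\mathrm{max}$ vanishes because $u$ and the cutoff vanish there, so no boundary terms survive the integration by parts. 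One checks that all constants may be taken uniform in $h\in(0,h_0)$.

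Finally I would feed Proposition~\ref{thm:hexpdecay}, applied on the pair $\Sigma_{3/2}\Subset\Sigma_2$, into the right-hand side above to estimate $\| u \|_{L^2(\Sigma_{3/2})}$ by $e^{-\epsilon/h}\|u\|_{L^2(\Sigma_2)} + \|(P^\sharp(h)-E(h))u\|_{L^2(\Sigma_2)}$, and bound $\|(P^\sharp(h)-E(h))u\|_{L^2(\Sigma_{3/2})}$ trivially by the same quantity over the larger set $\Sigma_2$. Relabeling constants gives the claim. The only point requiring any care is making sure the elliptic estimate is applied on a region where $V$ is smooth and $h$-bounded and where the only boundary is the good Dirichlet one; this is why the intermediate interval $\Sigma_{3/2}$ — staying away from both $z=0$ and $A_2$ — is needed. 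I do not anticipate a genuine obstacle here: this is a routine combination of a standard elliptic estimate with the already-proved exponential decay.
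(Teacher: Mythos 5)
Your argument is correct and is exactly the route the paper intends: the statement is obtained by combining Proposition \ref{thm:hexpdecay} with the standard rescaled semiclassical elliptic estimate of \cite[Chapter 7]{Zworski:2012}, using that the only boundary met is the regular Dirichlet endpoint $z=z_\mathrm{max}$. Your insertion of the intermediate interval and the check that $V$ is smooth and $h$-uniformly bounded away from $z=0$ simply spell out the details the paper leaves implicit.
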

The norm on $H^k_h(U)$ is given by $\|u\|^2_{H^k_h} = \sum_{|\alpha|\leq k} \int_U |(hD)^\alpha)u|^2 dx$.

\subsection{Asymptotic expansion for low lying quasimodes} \label{sect:expansion}
Before constructing quasimodes for $P(h)$, we apply the results of the previous section to obtain asymptotic expansions for the lowest eigenvalues of $P^\sharp(h)$.

\begin{prop} \label{thm:h1/2}
Let $T>0$. There exists $h_0>0$ depending on $T$ so that for all $h\in(0,h_0)$ there is a one-to-one correspondence between the numbers $\widetilde{E}_n(h) = 1+2(2n+1+\nu)h$ and the eigenvalues $E^\sharp_n(h)$ of $P^\sharp(h)$ which are both less than $1+Th$. Moreover, there are constants $C_n>0$ so that
\[
|E^\sharp_n(h) - \widetilde{E}_n(h)| < C_n h^{3/2}.
\]
\end{prop}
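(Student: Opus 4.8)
The plan is to compare $P^\sharp(h)$ with the model operator $\widetilde{P}(h)$, whose spectrum is explicitly $\widetilde{E}_n(h) = 1 + 2(2n+1+\nu)h$, and to transfer the spectral data via a minimax argument combined with the Agmon decay estimates from Section \ref{sect:agmon}. The two operators differ by $R(z;h) = V(z;h) - h^2(\nu^2-1/4)z^{-2} - z^2 - 1$, which by Lemma \ref{prop:Vasymptotics} is $O(z^3) + h^2 O(1)$ on compact sets but is \emph{not} globally small; so the comparison cannot be a naive perturbation estimate, and the exponential concentration of the relevant eigenfunctions near $z=0$ must do the work.

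First I would establish the upper bound $E^\sharp_n(h) \leq \widetilde{E}_n(h) + C_n h^{3/2}$ using the min-max principle with trial functions built from the model eigenvectors $\widetilde{u}_k(z;h) = h^{-1/4}\widetilde{u}_k(h^{-1/2}z;1)$, $k=0,\dots,n$. After rescaling $z = h^{1/2}y$, the function $\widetilde{u}_k(y;1) \propto y^{\nu+1/2} e^{-y^2/2}{}_1F_1(-k,1+\nu,y^2)$ has Gaussian decay in $y$, hence is concentrated in $z \sim h^{1/2}$; cutting it off at scale $z \sim h^{\alpha}$ for a suitable $0 < \alpha < 1/2$ introduces only errors of order $e^{-h^{2\alpha-1}/C}$, which are superpolynomially small and harmless. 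On the cutoff trial function, evaluating the quadratic form of $P^\sharp(h)$ replaces $V$ by $h^2(\nu^2-1/4)z^{-2} + z^2 + 1 + R$, and since $|R| = O(z^3) + h^2 O(1)$ while $\|z^3 \widetilde{u}_k\|^2 = O(h^3)$ after rescaling and $\| h^2 \widetilde u_k\|^2 = O(h^4)$, the error contributed by $R$ is $O(h^{3})$ in the numerator, i.e. $O(h^{2})$ relative — actually using $\langle R\, u, u\rangle$ directly gives the clean bound; the upper bound on the $n$-th eigenvalue then follows since the span of these $n+1$ trial functions is an $(n+1)$-dimensional test space, after checking near-orthonormality (the overlaps are superpolynomially small). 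The constant $C_n$ is allowed to depend on $n$ because the $L^2$-size of $z^3\widetilde u_k$ grows with $k$.

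For the matching lower bound, I would run the argument in reverse: use the min-max characterization of $\widetilde{E}_n(h)$ with trial functions built from the \emph{actual} eigenvectors $u^\sharp_k(h)$, $k = 0,\dots,n$. The crucial input is that any eigenfunction of $P^\sharp(h)$ with eigenvalue $< 1+Th$ satisfies the Gaussian Agmon bound $\|\exp(z^2/ch)\, u^\sharp_k(h)\|_{L^2(\Omega)} \leq C\|u^\sharp_k(h)\|_{L^2(\Omega)}$ from Proposition \ref{thm:pointdecay}, so these true eigenfunctions are also concentrated at scale $z \sim h^{1/2}$; one then controls $\langle R\, u^\sharp_k, u^\sharp_k\rangle$ the same way, now bootstrapping the a priori bound $\|z^3 u^\sharp_k\|^2 = O(h^3)\|u^\sharp_k\|^2$ from the weighted estimate (writing $z^3 = z^3 e^{-z^2/ch}\cdot e^{z^2/ch}$ and bounding the first factor by $O(h^{3/2})$ pointwise). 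Cutting off $u^\sharp_k$ near $z = h^\alpha$ again costs only superpolynomially small errors by the same weighted bound, and the resulting functions lie in $\widetilde{\DOM}$; near-orthonormality of $u^\sharp_0,\dots,u^\sharp_n$ is automatic since they are genuine eigenfunctions. This yields $\widetilde E_n(h) \leq E^\sharp_n(h) + C_n h^{3/2}$, and combining the two bounds gives $|E^\sharp_n(h) - \widetilde E_n(h)| < C_n h^{3/2}$. The one-to-one correspondence follows by a counting/continuity argument: since the model eigenvalues $\widetilde E_n(h)$ are separated by gaps of size $\sim 4h \gg h^{3/2}$, the $h^{3/2}$-perturbation cannot cause crossings or mergers, so the eigenvalues of $P^\sharp(h)$ below $1+Th$ are in bijection with those $\widetilde E_n(h)$ below $1+(T-\epsilon)h$, and shrinking $T$ slightly absorbs the boundary ambiguity.

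The main obstacle is the global non-smallness of $R$: the comparison is only valid because the eigenfunctions in question live at scale $h^{1/2}$, so the entire argument hinges on having the Gaussian Agmon estimate (Proposition \ref{thm:pointdecay}) available for the \emph{true} eigenfunctions before one knows their eigenvalues precisely — this is exactly why that proposition was stated for all $E(h) < 1+Th$ rather than for a single eigenvalue. A secondary technical point is ensuring the cutoff functions respect the singular boundary condition $\lim_{z\to 0^+} z^{\nu-1/2}u = 0$ at the origin; this is not an issue since the cutoffs are supported away from $z=0$ only on the far side, and near $z=0$ the trial functions already satisfy the condition because both $\widetilde u_k$ and $u^\sharp_k$ do, and multiplication by a cutoff that is $\equiv 1$ near $0$ preserves it.
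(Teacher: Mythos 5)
Your proposal is correct, and it shares the paper's core strategy -- compare $P^\sharp(h)$ with the model $\widetilde{P}(h)$ through the remainder $R$ of Lemma \ref{prop:Vasymptotics}, cut off near the origin, and defeat the global non-smallness of $R$ by the Gaussian concentration of the model eigenfunctions in one direction and the Agmon bound of Proposition \ref{thm:pointdecay} for the true eigenfunctions in the other. Where you differ is the transfer mechanism: the paper turns each cut-off eigenfunction into a quasimode, estimates $\|(P^\sharp(h)-\widetilde{E}(h))\chi\widetilde{u}\|$ and $\|(\widetilde{P}(h)-E^\sharp(h))\chi u^\sharp\|$ in norm, and invokes the spectral theorem (distance to the spectrum bounded by the residual), whereas you run the min--max principle in both directions with $(n+1)$-dimensional test spaces built from the cut-off eigenfunctions. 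Each route buys something: the norm/spectral-theorem argument needs derivative control of $u^\sharp$ on $\supp\chi'$ to bound the commutator $[-h^2\frac{d^2}{dz^2},\chi]u^\sharp$, which is exactly why the paper invokes the rescaled elliptic estimate of Corollary \ref{cor:sobolev}, while your quadratic-form version can dispense with that (an IMS-type identity reduces the localization error to $h^2\|\chi' u\|^2$, controlled by the weighted $L^2$ bound alone -- you should state this explicitly, since a naive expansion of $\langle \widetilde{P}(h)(\chi u),\chi u\rangle$ does produce a $u'$ term); on the other hand, min--max obliges you to verify near-orthonormality of the test families and to pin the labeling via the $\sim 4h$ gaps between the $\widetilde{E}_n(h)$, a counting step the paper leaves implicit in its two-directional argument (and both treatments are equally cavalier about the boundary case $2(2n+1+\nu)=T$, which you at least acknowledge). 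Your shrinking cutoff at scale $h^\alpha$, $0<\alpha<1/2$, is harmless but unnecessary -- a fixed cutoff as in the paper works since the relevant functions are $O(e^{-\epsilon/h})$ at any fixed positive distance from the origin.
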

\begin{proof}

Fix some $1<S<V_0(z_\mathrm{max})$ and note that $1+Th < 1+S$ for $h$ small enough. Fix $A > z_A(1+S)$ and let $\chi$ be a smooth compactly supported function with $\chi \equiv 1$ on $(0,A]$ and $\supp \chi = \Omega$.

First, let $\widetilde{E}(h) < 1+Th$ be an eigenvalue of $\widetilde{P}(h)$ with normalized eigenvector $\widetilde{u}(h)$. Then $\chi \widetilde{u}(h) \in \DOM^\sharp$.   We compute
\[
(P^\sharp(h) - \TE(h))(\chi\,\widetilde{u}(h)) = \chi R(h) \widetilde{u}(h) + \left[-h^2 \textstyle{\DDZ},\chi \right] \widetilde{u}(h).
\]
By their explicit forms both $\widetilde{u}(h)$ and its derivative are exponentially decaying with Gaussian weight $-z^2/2h$. Since $R(h)=O(z^3)+O(h^2)$ on $\Omega$, we get   
\[
\| \chi R(h) \widetilde{u}(h) \|_{L^2(\Omega)} = O(h^{3/2}),
\]
and
\[
\| \left[-h^2 \textstyle{\DDZ},\chi \right] \widetilde{u}(h) \|_{L^2(\Omega)} = O(e^{-\epsilon/h}).
\]
The constants in the $O$-terms are uniform in $\TE(h)<1+Th$. Thus 
\[
\|(P^\sharp(h) - \TE(h))(\chi\, \widetilde{u}(h)) \|_{L^2(\Omega)} = O(h^{3/2}).
\] 
Moreover since $\|\chi\, \widetilde{u}(h)\|_{L^2(\Omega)} = 1- O(e^{-d/h})$, where the $O$-term is uniform for $\TE(h)<1+Th$, it follows that $\chi\, \widetilde{u}(h)$ can be normalized without affecting the $O(h^{3/2})$ bound. The spectral theorem then guarantees the existence of an eigenvalue $E^\sharp(h)$ for $P^\sharp(h)$ satisfying $|E^\sharp(h)-\widetilde{E}(h)| < C h^{3/2}$.

For the other direction, suppose $u^\sharp(h)$ is a normalized eigenvector with eigenvalue $E^\sharp(h)$. Then $\chi u^\sharp(h)\in\widetilde{\DOM}$ if we extend it by zero outside of $\Omega$. As above, compute 
\[
(\widetilde{P}(h)-E^\sharp(h))(\chi\, u^\sharp(h)) = - \chi R(h) u^\sharp(h) + \left[-h^2 \textstyle{\DDZ},\chi \right] u^\sharp(h).
\]
This time we apply Proposition \ref{thm:pointdecay} and use the fact that $P^\sharp(h)u^\sharp(h) = E^\sharp(h)u^\sharp(h)$ to conclude that    
\[
\| \chi R(h) u^\sharp(h) \|_{L^2(0,\infty)} = O(h^{3/2}),
\]
and use Corollary \ref{cor:sobolev} to see that
\[
\| \left[-h^2 \textstyle{\DDZ},\chi \right] u^\sharp(h) \|_{L^2(0,\infty)} = O(e^{-\epsilon/h}),
\]
where the constants in the $O$-terms are uniform in $E^\sharp(h)<1+Th$. By another application of Proposition \ref{thm:hexpdecay}, we write
\[
\| \chi\, u^\sharp(h) \|^2_{L^2(0,\infty)} = \| u^\sharp(h) \|^2_{L^2(\Omega)} - \left< (1-\chi^2) u^\sharp(h),u^\sharp(h) \right >_{L^2(\Omega)} = 1 - O(e^{-\epsilon/h}), 
\] 
where the $O$-term is again uniform for $E^\sharp(h) < 1+ Th$. It follows that $\chi\, u^\sharp(h)$ can be normalized as above and the spectral theorem then guarantees the existence of an eigenvalue $\TE(h)$ for $\widetilde{P}(h)$ satisfying $|E^\sharp(h)-\widetilde{E}(h)| < C h^{3/2}$.
\end{proof}

\begin{cor} For each $\delta>0$ there exists $h_0>0$ such that $E^\sharp_0(h) \geq 1+(2-\delta)h$ for all $h\in(0,h_0)$.
\end{cor}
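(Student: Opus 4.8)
The plan is to derive this directly from Proposition~\ref{thm:h1/2}, exploiting the fact that $\nu>0$ pushes the lowest model eigenvalue $\widetilde{E}_0(h)=1+2(1+\nu)h$ strictly above the level $1+(2-\delta)h$.

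First I would dispose of the degenerate range $\delta\geq 2$: there $1+(2-\delta)h\leq 1$, while the lemma asserting $P^\sharp(h)\geq 1+Ch$ for small $h$ already gives $E^\sharp_0(h)\geq 1$, so the inequality is immediate. Hence assume $0<\delta<2$ and set $T=2-\delta>0$. Since $\nu>0$ one has $T=2-\delta<2<2(1+\nu)$, and therefore for every $n\geq 0$ and every $h>0$,
\[
\widetilde{E}_n(h)=1+2(2n+1+\nu)h\;\geq\;1+2(1+\nu)h\;>\;1+Th .
\]
In other words, with this choice of $T$ there are \emph{no} model eigenvalues below the threshold $1+Th$.

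Now apply Proposition~\ref{thm:h1/2} with this $T$. It furnishes an $h_0>0$ so that for $h\in(0,h_0)$ the eigenvalues of $P^\sharp(h)$ lying below $1+Th$ are in one-to-one correspondence with the numbers $\widetilde{E}_n(h)$ lying below $1+Th$; since the latter set is empty, the former is empty as well. Thus $P^\sharp(h)$ has no spectrum below $1+Th$, and in particular $E^\sharp_0(h)\geq 1+Th=1+(2-\delta)h$ for all $h\in(0,h_0)$, which is the assertion (recall $E^\sharp_0(h)$ is the bottom of $\sigma(P^\sharp(h))$).

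I do not anticipate any real obstacle: the whole content sits in Proposition~\ref{thm:h1/2} (the harmonic approximation), and the corollary merely records that this approximation forces a spectral gap of size $2(1+\nu)h$ above the bottom of the well, which is wider than $(2-\delta)h$ precisely because $\nu>0$. The only points requiring a line of care are the trivial range $\delta\geq 2$ and the verification that the selected $T$ leaves the model threshold set empty.
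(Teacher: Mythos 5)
Your argument is correct and is essentially the intended one: the paper states this corollary without proof as an immediate consequence of Proposition \ref{thm:h1/2}, and your application of that proposition with $T=2-\delta$ (the set of model eigenvalues $\widetilde{E}_n(h)$ below $1+Th$ is empty, hence by the correspondence so is the set of eigenvalues $E^\sharp_n(h)$ below $1+Th$), together with the trivial range $\delta\geq 2$, does the job. The only point worth noting is that even if one reads Proposition \ref{thm:h1/2} in the slightly weaker form actually established in its proof (each $E^\sharp(h)<1+Th$ lies within $Ch^{3/2}$ of some $\widetilde{E}_n(h)$, not necessarily itself below $1+Th$), your conclusion persists, since $\widetilde{E}_0(h)=1+2(1+\nu)h$ exceeds $1+(2-\delta)h+Ch^{3/2}$ for $h$ small.
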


Next, we improve on the previous result by producing a full asymptotic expansion for the $E^\sharp(h)$ lying close to $E=1$. We refer to \cite[Chapter 12]{Helffer:2005} for the usual case of a nondegenerate potential well in $\mathbb{R}^n$. Let $U(h):L^2(\Omega)\rightarrow L^2(h^{-1/2}\Omega)$ denote the unitary dilation $(U(h)u)(x) = h^{1/4}u(h^{1/2}x)$. Then $U(h) \widetilde{P}(h) U(h)^{-1} = 1 + h Q_0$ where $Q_0 = \widetilde{P}(1)-1$, in other words $h$ scales out exactly. Keeping this in mind, we conjugate $P^\sharp(h)$ by $U(h)$ and collect like powers of $h^{1/2}$ in the Laurent series of $V(h^{1/2}x;h)$ to formally write  
\[
U(h) P^\sharp(h) U(h)^{-1} = 1+h\sum_{k=0}^{\infty}h^{k/2} Q_k
\]
where $Q_k$ for $k\geq 1$ is a polynomial of degree at most $k+2$ (whose coefficients are independent of $h$). Before proceeding with the construction, we remark that the same methods as in Propositions \ref{thm:pointdecay} and \ref{thm:hexpdecay} give the following, which we state as lemmas.

\begin{lem} \label{lem:Qpointdecay}
Let $T>0$. There exist constants $C>0$, and $c>0$ depending on $T$ such that
\[
\| \exp\left(\textstyle{\frac{x^2}{c}}\right) \, u \|_{L^2(0,\infty)} \leq C \left(\|u\|_{L^2(0,\infty)} + \|\exp\left(\textstyle{\frac{x^2}{c}}\right)(Q_0-E)u\|_{L^2(0,\infty)}\right),
\]
for all $u\in \widetilde{\DOM}$ and $E$ satisfying $E<T$.
\end{lem}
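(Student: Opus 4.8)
The plan is to transcribe the Agmon argument of Proposition~\ref{thm:pointdecay} with the semiclassical parameter set equal to $1$, using the convex weight $\phi(x)=x^2/c$ and the confining potential $x^2$ of $Q_0=-\DDX+(\nu^2-\tfrac14)x^{-2}+x^2$ in the role played there by $V_0+h^2V_1-(1+Th)$. Since $(\phi')^2=4x^2/c^2$, any choice of $c>2$ makes $x^2-(\phi')^2=(1-4/c^2)x^2$ positive and growing, so one can fix $R=R(T,c)>0$ with
\[
x^2-(\phi')^2-E\;\ge\;1\qquad\text{for all }x\ge R\text{ and all }E<T.
\]
On the bounded classically allowed region $(0,R]$ the weight is bounded, $e^{\phi}\le e^{R^2/c}$, so there $\|e^{\phi}u\|_{L^2}$ is trivially controlled by $\|u\|_{L^2}$; only the tail $x\ge R$ is at issue. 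For that I would fix $\chi\in C^\infty([0,\infty))$ with $0\le\chi\le1$, $\chi\equiv0$ on $[0,R]$, $\chi\equiv1$ on $[R+1,\infty)$, so that $\supp\chi'\subseteq[R,R+1]$ is a fixed compact set on which $e^{\phi}$, $\phi'$, $\chi'$ are all bounded.

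On this set-up I would run the computation of Proposition~\ref{thm:pointdecay} essentially verbatim: apply the $Q_0$-analogues of the conjugation identity~\eqref{eq:agmonintbyparts} and the commutator identity~\eqref{eq:agmoncommutator} (now on $(0,\infty)$, with $h=1$) to $e^{\phi}\chi u$, bound the centrifugal part from below using the Hardy inequality $\langle(-\DDX+(\nu^2-\tfrac14)x^{-2})v,v\rangle\ge0$ (valid for $v\in\widetilde{\DOM}$; here $v=e^{\phi}\chi u$, which moreover vanishes near $0$), and use the positivity above on $\supp\chi\subseteq[R,\infty)$. Taking real parts and applying Cauchy--Schwarz yields
\[
\|e^{\phi}\chi u\|_{L^2}^2\;\le\;\big|\big\langle u,\,e^{2\phi}\big((\chi')^2+2\phi'\chi\chi'\big)u\big\rangle_{L^2}\big|\;+\;\|e^{\phi}(Q_0-E)u\|_{L^2}\,\|e^{\phi}\chi u\|_{L^2}.
\]
Because $\supp\chi'\subseteq[R,R+1]$, the first term is $\le C_R\|u\|_{L^2}^2$ with $C_R$ depending only on $T$; absorbing $\|e^{\phi}\chi u\|_{L^2}$ via ``$p\le r+p^{1/2}q^{1/2}\Rightarrow p\le 2r+q$'' and adding back the trivial bound on $(0,R]$ gives the asserted inequality.

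The one step that needs more care than on the bounded interval $\Omega$ is that $(0,\infty)$ is unbounded, so we do \emph{not} know a priori that $e^{\phi}u\in L^2$: a generic $u\in\widetilde{\DOM}$ only satisfies $x^2u\in L^2$, which is far too weak to justify the integrations by parts above with the unbounded weight. I would get around this by first carrying out the whole argument with the \emph{bounded} weight $\phi_\varepsilon:=\phi/(1+\varepsilon\phi)$, $\varepsilon>0$, in place of $\phi$. One has $\phi_\varepsilon\nearrow\phi$ pointwise as $\varepsilon\downarrow0$, $|\phi_\varepsilon'|\le|\phi'|$ (so the positivity on $[R,\infty)$ holds with the same $R$), and $\phi_\varepsilon$ bounded with bounded derivatives (so $e^{\phi_\varepsilon}\chi u\in\widetilde{\DOM}$, every integral is finite, and the conjugation and Hardy identities are legitimate, no boundary terms at $0$ or $\infty$ being lost). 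All constants are independent of $\varepsilon$, and $\|e^{\phi_\varepsilon}(Q_0-E)u\|_{L^2}\le\|e^{\phi}(Q_0-E)u\|_{L^2}$, so the $\varepsilon$-estimate reads $\|e^{\phi_\varepsilon}\chi u\|_{L^2}^2\le C(\|u\|_{L^2}^2+\|e^{\phi}(Q_0-E)u\|_{L^2}^2)$ uniformly in $\varepsilon$; letting $\varepsilon\downarrow0$ and applying the monotone convergence theorem on the left promotes this to the bound with $e^{\phi}$ (and if the right-hand side is infinite there is nothing to prove). Verifying this membership $e^{\phi_\varepsilon}\chi u\in\widetilde{\DOM}$ and the attendant passage to the limit is the main, and essentially the only, obstacle; the rest is a line-by-line copy of Proposition~\ref{thm:pointdecay} with $h=1$.
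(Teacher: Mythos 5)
Your proposal is correct and takes essentially the same route as the paper: the paper's own proof consists precisely of the remark that the Agmon argument of Proposition~\ref{thm:pointdecay} goes through with $h=1$ (using the ordinary Hardy inequality in place of Lemma~\ref{lem:weightedhardy}), the only new point being to work with the bounded weight $\phi_\alpha=\phi/(1+\alpha\phi)$ and justify the limit $\alpha\to 0$, which is exactly your $\phi_\varepsilon$ regularization. Your added details (the choice $c>2$, the fixed cutoff region, monotone convergence in the limit) are consistent elaborations of that outline.
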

The proof of this fact goes through as before with $h=1$; the only difference is that since we now have an unbounded interval, we need to work with the bounded weight $\phi_\alpha = \frac{\phi}{1+\alpha \phi}$ and then justify the limit as $\alpha\rightarrow 0$.
\begin{lem} \label{lem:Qhexpdecay}
Let $S > 0$. There exist constants $h_0>0,\, C>0,\, \epsilon>0$ depending on $S$, such that for any fixed interval $\Sigma_1 \Subset \left\{z:\, z^2 > S\right\}$,  
\[
\| u \|_{H_h^2(\Sigma_1)} \leq C \left(e^{-\epsilon/h} \|u\|_{L^2(\Sigma_2)}+\|(\widetilde{P}(h) - E(h))u \|_{L^2(\Sigma_2)}\right),
\]
for $h\in (0,h_0)$, $u\in \widetilde{\DOM}$, and all $E(h) < 1+S$, whenever $\Sigma_1\Subset \Sigma_2 \Subset (0,\infty)$.
\end{lem}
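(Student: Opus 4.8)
The plan is to re-run the Agmon/commutator argument from the proof of Proposition~\ref{thm:hexpdecay}, with the physical potential $V_0+h^2V_1$ replaced by the globally convex analytic potential $z^2+1$ and the weighted Hardy inequality of Lemma~\ref{lem:weightedhardy} replaced by the ordinary Hardy inequality $L_\nu(h)\ge 0$, which absorbs the model centrifugal term $h^2(\nu^2-1/4)z^{-2}$. Since the working intervals $\Sigma_1\Subset\Sigma_2$ are bounded away from both $z=0$ and $z=\infty$ and every cutoff below is compactly supported inside $(0,\infty)$, neither endpoint of the half-line obstructs the integration by parts; in particular, in contrast with Lemma~\ref{lem:Qpointdecay}, the Agmon weight will already be bounded and compactly supported, so no $\alpha$-regularization $\phi_\alpha=\phi/(1+\alpha\phi)$ is needed.

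First I would set up the geometry. Because $\Sigma_1\Subset\{z^2>S\}$ and $\Sigma_1\Subset\Sigma_2$, the open set $\{z^2>S\}\cap\Sigma_2$ contains $\overline{\Sigma_1}$, so I can fix an intermediate interval $\Sigma'$ with $\Sigma_1\Subset\Sigma'\Subset\{z^2>S\}\cap\Sigma_2$. Choose $\chi_1,\chi_2\in C_c^\infty(\Sigma')$ with $\chi_1\equiv 1$ near $\overline{\Sigma_1}$ and $\chi_2\equiv 1$ near $\supp\chi_1$, so that $\chi_1\equiv 0$ (hence $\chi_1'\equiv 0$) on $\supp\chi_2'$. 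On the compact set $\supp\chi_2\subseteq\{z^2>S\}$ one has $z^2+1-E(h)\ge z^2-S\ge c_0>0$ uniformly over $E(h)<1+S$, so I can pick $\epsilon>0$ small enough that, with $\phi=\epsilon\chi_1$, the Agmon positivity $z^2+1-E(h)-(\phi')^2\ge\delta>0$ holds on $\supp\chi_2$ for some fixed $\delta$ and all $E(h)<1+S$. This is where the hypothesis $\Sigma_1\Subset\{z^2>S\}$ (rather than merely $\Sigma_2\Subset(0,\infty)$) is used: the weight and both cutoffs are confined to $\Sigma'$, where the classically forbidden inequality $z^2+1>1+S\ge E(h)$ genuinely holds.

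Next I would run the estimate. Applying the evident $\widetilde{P}(h)$-analogues of \eqref{eq:agmonintbyparts} and \eqref{eq:agmoncommutator} with $f=e^{\phi/h}\chi_2u$ (all boundary terms vanishing since $\chi_2u$ is supported in a compact subinterval of $(0,\infty)$), dropping the nonnegative contribution of $-h^2\DDZ+h^2(\nu^2-1/4)z^{-2}$ by Hardy, and using the positivity above together with the fact that $\phi\equiv 0$ and $e^{2\phi/h}\equiv 1$ on $\supp\chi_2'$ (which annihilates the $\phi'\chi_2\chi_2'$ commutator term and trivializes the weight in the $(\chi_2')^2$ term), one arrives at an inequality of the form $\delta\,p\le Ch^2\|u\|_{L^2(\Sigma_2)}^2+e^{\epsilon/h}\|(\widetilde{P}(h)-E(h))u\|_{L^2(\Sigma_2)}\,p^{1/2}$, where $p=\|e^{\phi/h}\chi_2u\|_{L^2}^2$. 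The elementary consequence $p\le C\big(e^{2\epsilon/h}\|(\widetilde{P}(h)-E(h))u\|_{L^2(\Sigma_2)}^2+h^2\|u\|_{L^2(\Sigma_2)}^2\big)$, combined with the observation that $e^{\phi/h}\chi_2u=e^{\epsilon/h}u$ on $\Sigma_1$, gives (after shrinking $\epsilon$ slightly to absorb the $h^2$) the $L^2$ bound $\|u\|_{L^2(\Sigma_1)}\le C(e^{-\epsilon/h}\|u\|_{L^2(\Sigma_2)}+\|(\widetilde{P}(h)-E(h))u\|_{L^2(\Sigma_2)})$. Finally, I would upgrade $L^2$ to $H^2_h$ by the rescaled interior elliptic estimate of \cite[Chapter~7]{Zworski:2012}, inserting one more interval between $\Sigma_1$ and $\Sigma_2$, exactly as in Corollary~\ref{cor:sobolev}; here the estimate is purely interior, so no boundary condition enters.

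The computation is routine once the geometry is arranged, so I do not expect a serious obstacle. The two points requiring care are (i) the interposition of $\Sigma'$ described above, which reconciles the interior hypothesis $\Sigma_2\Subset(0,\infty)$ with the need to place the Agmon weight inside the forbidden region $\{z^2>S\}$, and (ii) the justification of the integration-by-parts identities at the endpoints of $(0,\infty)$ --- automatic here because all cutoffs are compactly supported in the open half-line, which is precisely what removes the need for the weight truncation used in Lemma~\ref{lem:Qpointdecay}.
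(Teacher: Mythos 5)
Your proposal is correct and follows essentially the same route as the paper, which simply observes that the Agmon/commutator argument of Proposition~\ref{thm:hexpdecay} together with the elliptic upgrade of Corollary~\ref{cor:sobolev} goes through unchanged for $\widetilde{P}(h)$, with the ordinary Hardy inequality replacing Lemma~\ref{lem:weightedhardy}. Your added care about interposing $\Sigma'$ inside $\{z^2>S\}\cap\Sigma_2$ and about the compactly supported cutoffs removing any endpoint issues is exactly the (implicit) content of the paper's remark.
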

Here the proof goes through unchanged. Note that in the proof of both of these results, we should use the ordinary Hardy inequality in place of Lemma \ref{lem:weightedhardy}.

\begin{prop} \label{thm:expansion}
Fix $n\geq0$. There exists $h_0>0$ depending on $n$ such that $E^\sharp_n(h)$ has an asymptotic expansion
\[
E^\sharp_n(h) = 1+2(2n+1+\nu)h + \sum_{k=1}^{N}E_{n,k} h^{\frac{k+2}{2}} + O(h^{\frac{N+3}{2}}).
\]
for $h\in (0,h_0)$.
\end{prop}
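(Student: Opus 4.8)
The plan is to carry out the standard Rayleigh--Schr\"odinger (Born--Oppenheimer) construction on the rescaled side, using the explicitly solvable operator $Q_0$ as the unperturbed problem, and then to transfer the resulting quasimode back to $P^\sharp(h)$ exactly as in the proof of Proposition~\ref{thm:h1/2}, invoking that proposition to identify the eigenvalue that is produced.

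First I would work with $U(h)P^\sharp(h)U(h)^{-1} = 1 + h\bigl(Q_0 + \sum_{k\ge 1}h^{k/2}Q_k\bigr)$ on $L^2(0,h^{-1/2}z_\mathrm{max}]$ with the Dirichlet condition at the right endpoint; the series $\sum_{k\ge1}h^{k/2}Q_k = h^{-1}R(h^{1/2}x;h)$ converges absolutely whenever $h^{1/2}x$ stays in a fixed small neighbourhood of $0$, because $r(z)$, and hence $V(z;h)$ and $R(z;h)$, is analytic in $z$ near $0$ and polynomial in $h$ (Lemmas~\ref{lem:rasymptotics} and \ref{prop:Vasymptotics}). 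Dropping the constant $1$, I seek formal series $\lambda(h) = \lambda_0 + \sum_{k\ge1}\lambda_k h^{k/2}$ and $\psi(h) = \psi_0 + \sum_{k\ge1}\psi_k h^{k/2}$ solving $\bigl(Q_0 + \sum_{k\ge1}h^{k/2}Q_k - \lambda(h)\bigr)\psi(h)=0$ order by order, normalized by $\langle \psi_k,\psi_0\rangle = \delta_{k0}$. At order $h^0$ this forces $\lambda_0 = 2(2n+1+\nu)$ and $\psi_0 = \widetilde u_n(\,\cdot\,;1)$. At order $h^{k/2}$, $k\ge 1$, one has $(Q_0-\lambda_0)\psi_k = \sum_{i=1}^k(\lambda_i - Q_i)\psi_{k-i}$; pairing with $\psi_0$ and using self-adjointness of $Q_0$ determines $\lambda_k$ from the data already constructed, and since $\lambda_0$ is a simple eigenvalue of the discrete-spectrum operator $Q_0$, the operator $(Q_0-\lambda_0)$ restricted to $\psi_0^\perp$ is boundedly invertible, determining $\psi_k$. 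The structural observation I would prove by induction is that every $\psi_k$ lies in $\mathcal{S} = \{z^{\nu+1/2}p(z)e^{-z^2/2}: p \text{ a polynomial}\}$: the singular term $\tfrac{\nu^2-1/4}{z^2}$ in $Q_0$ is cancelled by the $z^{\nu+1/2}$ prefactor, so $Q_0$ and each polynomial $Q_i$ map $\mathcal{S}$ into itself, and $(Q_0-\lambda_0)^{-1}$ preserves $\mathcal{S}$ since $Q_0$ acts separately, with discrete spectra, on the even and odd parts of $\mathcal{S}$. Hence each $\psi_k$ decays like a Gaussian.

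Next I would truncate: set $\psi^{(N)}(h) = \sum_{k=0}^N h^{k/2}\psi_k$ and $\lambda^{(N)}(h) = \lambda_0 + \sum_{k=1}^N \lambda_k h^{k/2}$, so that $\bigl(Q_0 + \sum_{k\ge1}h^{k/2}Q_k - \lambda^{(N)}(h)\bigr)\psi^{(N)}(h)$ is, by construction, a sum of terms each of order at least $h^{(N+1)/2}$. To make this a genuine element of $U(h)\DOM^\sharp$ I multiply by a fixed cutoff $\chi$ with $\chi\equiv 1$ near $0$ and $\supp\chi \subset (0,\delta]$, where $\delta$ is small enough for the convergence above and $\delta < z_\mathrm{max}$; then $v_N(h) := U(h)^{-1}\bigl(\chi(h^{1/2}\cdot)\psi^{(N)}(h)\bigr) \in \DOM^\sharp$, since it vanishes at $z_\mathrm{max}$ and has the correct behaviour at $0$. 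The commutator $[-h\,d^2/dx^2,\chi(h^{1/2}\cdot)]\psi^{(N)}$ is supported where $h^{1/2}x\sim\delta$, i.e.\ where $\psi^{(N)}$ carries a weight $e^{-x^2/2} = O(e^{-\delta^2/2h})$, so it contributes $O(e^{-\epsilon/h})$; the truncation error, restricted to $\supp\chi(h^{1/2}\cdot)$, is $O(h^{(N+1)/2})$ in $L^2$ because the polynomial growth in $x$ of each summand is absorbed by the Gaussian decay of the $\psi_k$ and the tail $\sum_{k\ge1}h^{k/2}Q_k$ is summable on $\{h^{1/2}x\le\delta\}$. Since $\|v_N(h)\| = 1 + O(h^{1/2})$, after normalization and conjugating back, recalling the extra factor $h$ in $1+h(\cdots)$, I obtain $\|(P^\sharp(h) - E^{(N)}(h))v_N(h)\| = O(h^{(N+3)/2})$ with $E^{(N)}(h) := 1 + h\lambda^{(N)}(h)$.

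Finally, the spectral theorem gives $\mathrm{dist}(E^{(N)}(h),\sigma(P^\sharp(h))) = O(h^{(N+3)/2})$. To identify the nearby eigenvalue as $E^\sharp_n(h)$, I invoke Proposition~\ref{thm:h1/2}: the eigenvalues of $P^\sharp(h)$ below $1+Th$ are $h$-separated and satisfy $E^\sharp_n(h) = \widetilde E_n(h) + O(h^{3/2})$, while $E^{(N)}(h) = \widetilde E_n(h) + O(h^{3/2})$ as well, so for $h$ small the unique spectral point within $O(h^{3/2})$ of $E^{(N)}(h)$ is $E^\sharp_n(h)$, whence $|E^\sharp_n(h) - E^{(N)}(h)| = O(h^{(N+3)/2})$. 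Rewriting $h\lambda^{(N)}(h) = \sum_{k=1}^N \lambda_k\,h^{(k+2)/2}$ and setting $E_{n,k} = \lambda_k$ yields the stated expansion. I expect the main obstacle to be the transfer step rather than the formal recursion: one must reconcile the formal series, which only makes sense near $z=0$ where the potential is the restriction of a convergent series, with the genuine self-adjoint operator on $(0,z_\mathrm{max}]$, and it is precisely here that the Gaussian decay of the $\psi_k$ and the identification with $P^\sharp(h)$ via Proposition~\ref{thm:h1/2} (in the spirit of the Agmon estimates of Section~\ref{sect:agmon}) do the work.
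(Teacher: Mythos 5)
Your overall architecture is the paper's: the Rayleigh--Schr\"odinger recursion for $Q_0+\sum_{k\ge1}h^{k/2}Q_k$ with the Fredholm condition determining the $E_{n,k}$, truncation of the formal series, a cutoff to land in $\DOM^\sharp$, the spectral theorem, and identification of the nearby eigenvalue as $E^\sharp_n(h)$ via the $O(h)$ separation of the low-lying spectrum (Proposition \ref{thm:h1/2}). The genuine gap is your structural claim that every corrector $\psi_k$ lies in $\mathcal{S}=\{z^{\nu+1/2}p(z)e^{-z^2/2}: p\ \text{polynomial}\}$, which is what carries all the decay you later use for the truncation and commutator estimates. That claim is false, and so is the argument you give for it: on the half-line there is no parity decoupling. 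The eigenfunctions of $Q_0$ are $z^{\nu+1/2}$ times polynomials in $z^2$ times a Gaussian; the ``odd part'' of $\mathcal{S}$ is not preserved by $Q_0$ (for instance $Q_0\bigl(z^{\nu+1/2}\,z\,e^{-z^2/2}\bigr)$ contains a multiple of $z^{\nu-1/2}e^{-z^2/2}\notin\mathcal{S}$), and odd-type elements are not finite combinations of eigenfunctions, so the resolvent has no reason to preserve $\mathcal{S}$ and in fact does not. Concretely, for $d=3$ one has $Q_1=-\mu x^3$ and the first corrector solves $(Q_0-\lambda_0)\psi_1=\mu\bigl(x^3-\langle x^3\psi_0,\psi_0\rangle\bigr)\psi_0$. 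Conjugating by $z^{\nu+1/2}e^{-z^2/2}$, the operator acts on polynomials by $p\mapsto -p''-(2\nu+1)z^{-1}p'+2zp'+(2\nu+2-\lambda_0)p$, which preserves parity; the even part of the right-hand side is a \emph{nonzero} multiple of $\psi_0$ (here $\langle x^3\psi_0,\psi_0\rangle>0$, which is exactly why $E_{n,1}\neq0$ when $d=3$), i.e.\ it points along the kernel of $Q_0-\lambda_0$ within the even-type subspace, so no element of $\mathcal{S}$ can hit it; and for the odd part the constraint $p'(0)=0$ forced by the $z^{-1}$ term makes the coefficient system overdetermined, so there is no polynomial solution either. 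Thus $\psi_1\notin\mathcal{S}$ already at the first step in the physically relevant case, and your induction collapses.

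What you actually need is weaker, and it is how the paper proceeds: each corrector solves an inhomogeneous equation $(Q_0-E_{n,0})v_{n,k}=-\sum_{r<k}(Q_{k-r}-E_{n,k-r})v_{n,r}$ whose right-hand side is already known (inductively) to decay, and the Agmon-type bounds of Lemmas \ref{lem:Qpointdecay} and \ref{lem:Qhexpdecay} (with the ordinary Hardy inequality in place of Lemma \ref{lem:weightedhardy}) then give weighted-Gaussian decay $e^{-x^2/c}$ of each $v_{n,k}$, in $L^2$ and in $H^1_h$ near the support of $\chi'$. That decay suffices to absorb the polynomial growth of the $Q_k$ in the $O(h^{(N+3)/2})$ remainder and to make the commutator term $O(e^{-\epsilon/h})$; with this substitution for your invariance-of-$\mathcal{S}$ step, the rest of your argument goes through as in the paper.
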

\begin{proof}
Start with an eigenvector $v_{n,0} = \widetilde{u}_n(1)$ of $Q_0$ with eigenvalue $E_{n,0} = 2(2n + 1 + \nu)$. We are interested in formally solving 
\[
\left(\sum_{k=0}^\infty h^{k/2}\left(Q_k-E_{n,k}\right)\right)\left(\sum_{k=0}^\infty h^{k/2}v_{n,k}\right) \sim 0,
\]
where we need to find the $E_{n,k}$ and $v_{n,k}$ for $k\geq 1$. Expanding the product above and collecting like powers of $h^{1/2}$, we find the sequence of equations,
\[ 
(Q_0 - E_{n,0})v_{n,k} = - \sum_{r=0}^{k-1} (Q_{k-r}-E_{n,k-r})v_{n,r}.
\]
By Fredholm theory, we can solve this equation for $v_{n,k}$ as soon as the right hand side is orthogonal in $L^2(0,\infty)$ to the kernel of $Q_0 - E_{n,0}$, namely $\mathrm{span}\,(v_{n,0})$. We can inductively impose the Fredholm condition by setting
\[
E_{n,k} = \sum_{r=1}^{k-1} \left <(Q_{k-r}-E_{n,k-r})v_{n,r}, v_{n,0} \right> + \left < Q_k v_{n,0},v_{n,0}\right >,
\]
once $E_{n,j}$ and $v_{n,j}$ have been determined for $0\leq j \leq k-1$. Now let $\chi$ be the same cutoff function as in Proposition \ref{thm:h1/2}, and set 
\[ 
w_{n,N}(z;h) = \sum_{k=0}^N h^{k/2} v_{n,k}(h^{-1/2}z).
\] 
We wish to show that
\[
\left\lVert \left(P^\sharp(h) - \left(1+h \sum_{k=0}^N h^{k/2} E_{n,k} \right)\right)\chi(z) w_{n,N}(h)\right\rVert_{L^2(\Omega)} = O(h^{\frac{N+3}{2}}). 
\]
The proof proceeds as before by commuting the operator with $\chi$ at the loss of a commutator term. We are then left with estimating two terms: first the $L^2(h^{-1/2}\Omega)$ norm of $h^{\frac{N+3}{2}} \chi(h^{1/2}x) R_N(x) w_{n,N}(x;1)$ where $R_N$ is polynomially bounded. Then we need to estimate the $H_h^1(\supp \chi')$ norm of $w_{n,N}(h)$. Since $v_{n,0}$ is exponentially decaying with weight $-x^2/2$, and since each term in $w_{n,N}$ now solves an \emph{inhomogeneous} equation, we use Lemmas \ref{lem:Qpointdecay} and \ref{lem:Qhexpdecay} to inductively obtain the necessary decay of $w_{n,N}$. Similarly we can show that $w_{n,N}$ is normalizable and by the spectral theorem there is an eigenvalue of $P^\sharp(h)$ 
such that the distance to $1+h \sum_{k=0}^N h^{k/2} E_{n,k}$ is of order $O(h^{\frac{N+3}{2}})$. This eigenvalue must be $E^\sharp_n(h)$ since the lowest eigenvalues of $P^\sharp(h)$ are separated at a distance greater than $Ch$.       
\end{proof}

\begin{rem}
In the case of a nondegenerate potential well on $\mathbb{R}$, only integral powers of $h$ occur in the expansion of the lowest eigenvalues. This is in contrast to the situation here. Consider for example when $d=3$. In that case the Laurent expansion of $V$ is
\[
V(z;h) = 1 + h^2(\nu^2-1/4)z^{-2} + z^2 -\mu z^3 + \ldots
\]
and so 
\[
E_{n,1} = \int_0^\infty -\mu x^3\, \widetilde{u}_n(1)^2 dx,
\]
which is nonvanishing. Of course we are actually interested in an expansion of $\omega^\sharp_{n,\ell} = \sqrt{E^\sharp_n(h)}$ --- this expansion occurs in half-powers of $\ell^{-1}$. In Section \ref{sect:originalparameters} we examine the vanishing of certain coefficients depending on the dimension. In particular, we address a conjecture of Dias et al. \cite{Dias:2012} on the behavior of these coefficients in dimensions $d=3,4,5$.  
\end{rem}
 
\subsection{Construction of quasimodes} \label{sect:quasimodes}

In this section we present the main theorem on the existence of exponentially accurate quasimodes for $P(h)$.

\begin{theo} \label{thm:quasimodes}
Let $S > 0$ satisfy $1+S < V_0(z_\mathrm{max})$. There exists 
\begin{itemize}
	\item Constants $h_0>0,\, D_1,D_2>0$ depending on $S$ and an integer valued function $m(h)\geq 1$.
	\item Real numbers $\left\{E^\sharp_n(h)\right\}_{n=0}^{m(h)}$ with the property that $1 <E^\sharp_n(h)< 1+S$ for $h\in(0,h_0)$.
	\item Smooth functions $\left\{u_n(h)\right\}_{n=0}^{m(h)}\subset \DOM(h)$ with $\|u_n(h)\|_{L^2(0,\infty)}=1$, all supported in a compact set $K$.
\end{itemize}
such that for all $h\in (0,h_0)$, the functions $u_n(h)$ satisfy

\begin{enumerate} \itemsep8pt
	\item $\|\left(P(h)-E^\sharp_n(h)\right)u_n(h) \|_{L^2(0,\infty)} \leq e^{-D_1/h}$,
	\item $|\left\langle u_i(h),u_j(h) \right\rangle -\delta_{ij}| \leq e^{-D_2/h}.$ 
\end{enumerate}
\end{theo}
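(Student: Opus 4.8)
The plan is to obtain each quasimode by truncating an eigenfunction of the reference operator $P^\sharp(h)$ with a cutoff whose transition region sits inside the classically forbidden region $\Omega^+(1+S)$, and then extending the result by zero to an element of $\DOM(h)$. Concretely, I would let $E^\sharp_0(h) < E^\sharp_1(h) < \cdots < E^\sharp_{m(h)}(h)$ enumerate those eigenvalues of $P^\sharp(h)$ lying in $(1,1+S)$, with $L^2(\Omega)$-normalized eigenvectors $u^\sharp_n(h)$. These all lie above $1$ because $P^\sharp(h)\ge 1+Ch$, there are only finitely many because $\sigma(P^\sharp(h))$ is discrete, and a min--max comparison with rescaled harmonic-oscillator states (as in Proposition~\ref{thm:h1/2}) gives $E^\sharp_0(h),E^\sharp_1(h) = 1+O(h)$, so $m(h)\ge 1$ for $h$ small; the bound $m(h)=O(h^{-1})$ needed downstream is a separate consequence of the Weyl asymptotics of Proposition~\ref{prop:weyllaw} and is not needed for this statement. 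Since $1+S < V_0(z_\mathrm{max})$ we have $z_A(1+S) < z_\mathrm{max}$, so I can fix $z_A(1+S) < A_1 < A_2 < z_\mathrm{max}$ and a cutoff $\chi\in C_c^\infty([0,\infty))$ with $\chi\equiv 1$ on $(0,A_1]$ and $\supp\chi\Subset[0,z_\mathrm{max})$, $\chi\equiv 0$ on $[A_2,\infty)$; then $\supp\chi'$ and $\supp(1-\chi^2)\cap\Omega$ both lie in $[A_1,z_\mathrm{max}]\Subset\Omega^+(1+S)$. Because $\chi\equiv 1$ near $0$ and vanishes near $z_\mathrm{max}$, each $\chi\,u^\sharp_n(h)$, extended by zero, still satisfies $\lim_{z\to 0^+}z^{\nu-1/2}u = 0$, belongs to $\DOM(h)$, is supported in the compact set $K=[0,A_2]$, and is smooth on $(0,\infty)$ (inheriting the Frobenius behavior of $u^\sharp_n(h)$ at the singular endpoint). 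Then $u_n(h) := \chi\,u^\sharp_n(h)/\lVert \chi\,u^\sharp_n(h)\rVert_{L^2}$.

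The exponential estimates are then a corollary of Section~\ref{sect:agmon}. On $\supp\chi\subseteq\Omega$ the expressions defining $P(h)$ and $P^\sharp(h)$ coincide, so using $P^\sharp(h)u^\sharp_n(h)=E^\sharp_n(h)u^\sharp_n(h)$,
\[
\bigl(P(h)-E^\sharp_n(h)\bigr)\bigl(\chi\,u^\sharp_n(h)\bigr) = \bigl[-h^2\DDZ,\chi\bigr]u^\sharp_n(h) = -h^2\bigl(\chi''\,u^\sharp_n(h) + 2\chi'\,(u^\sharp_n)'(h)\bigr),
\]
which is supported in $[A_1,A_2]$. Choosing the intervals in Proposition~\ref{thm:hexpdecay} and Corollary~\ref{cor:sobolev} so that $[A_1,A_2]\subset\Sigma_1$ and $\Sigma_2\Subset\Omega^+(1+S)$, and using $E^\sharp_n(h)<1+S$, those results give $\lVert u^\sharp_n(h)\rVert_{H^2_h([A_1,A_2])} \le Ce^{-\epsilon/h}$ with $C,\epsilon$ depending only on $S$; since $\lVert u^\sharp_n\rVert_{L^2}\le\lVert u^\sharp_n\rVert_{H^2_h}$ and $\lVert (u^\sharp_n)'\rVert_{L^2}\le h^{-1}\lVert u^\sharp_n\rVert_{H^1_h}$ on that interval, the commutator has $L^2$-norm $\le Ce^{-\epsilon/h}$. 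Applying Proposition~\ref{thm:hexpdecay} once more on an interval containing $[A_1,z_\mathrm{max}]$ gives $\lVert \chi\,u^\sharp_n(h)\rVert^2_{L^2} = 1 - \langle (1-\chi^2)u^\sharp_n(h),u^\sharp_n(h)\rangle_{L^2(\Omega)} = 1-O(e^{-2\epsilon/h})$, so dividing by the normalization constant does not spoil the bound and item~(1) holds with any $D_1<\epsilon$. For item~(2), since $\langle u^\sharp_i(h),u^\sharp_j(h)\rangle_{L^2(\Omega)}=\delta_{ij}$ by self-adjointness of $P^\sharp(h)$, one has $\langle \chi u^\sharp_i(h),\chi u^\sharp_j(h)\rangle_{L^2} = \delta_{ij} - \langle (1-\chi^2)u^\sharp_i(h),u^\sharp_j(h)\rangle_{L^2(\Omega)} = \delta_{ij} + O(e^{-2\epsilon/h})$ by Cauchy--Schwarz and the same decay estimate, which yields item~(2) with any $D_2<2\epsilon$ after normalization.

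The one genuinely delicate point — and the reason Section~\ref{sect:agmon} is phrased the way it is — is that all constants ($h_0$, $\epsilon$, $C$, hence $D_1,D_2$) must be uniform in the index $n$, even though $m(h)\to\infty$ as $h\to0$. Propositions~\ref{thm:pointdecay} and \ref{thm:hexpdecay} and Corollary~\ref{cor:sobolev} are stated for \emph{all} $u\in\DOM^\sharp$ and \emph{all} $E(h)<1+S$ at once, with decay rate and constants depending only on $S$ through the geometry of the barrier in $\Omega^+(1+S)$, so this uniformity is automatic and every estimate above holds simultaneously for all $n\le m(h)$. The only other input is the harmless observation that $1+S<V_0(z_\mathrm{max})$ leaves a nondegenerate forbidden region $z_A(1+S)<z_\mathrm{max}$ in which to place the cutoff transition; everything else is bookkeeping.
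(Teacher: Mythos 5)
Your proposal is correct and follows essentially the same route as the paper: truncate the eigenfunctions of $P^\sharp(h)$ by a cutoff whose transition region lies in the classically forbidden region, bound the resulting commutator term via Proposition~\ref{thm:hexpdecay} and Corollary~\ref{cor:sobolev}, and use the same Agmon decay to control the normalization and the off-diagonal inner products. Your write-up in fact fills in details the paper leaves implicit (why $m(h)\geq 1$, why $E^\sharp_n(h)>1$, the uniformity of the constants in $n$, and the choice of $\supp\chi$ compactly contained in $[0,z_\mathrm{max})$ so the zero-extension is legitimate), but the underlying argument is the same.
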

\begin{proof}

Define $m(h)$ to be the number of $E^\sharp_n(h)$ satisfying $E^\sharp_n(h) < 1+S$.  Fix $A > z_A(1+S)$ and let $\chi$ be a smooth compactly supported function with $\chi \equiv 1$ on $(0,A]$ and $\supp \chi = \Omega$. Set $u_n(h) = \chi\, u^\sharp_n(h)$ for $n\in\left\{0,1,\ldots,m(h)\right\}$ so that $u_n(h)\in\DOM$ if we extend it by zero outside of $\Omega$. Then compute
\[
\|(P(h)-E_{n}(h))u_i(h)\|_{L^2(0,\infty)} = \left\lVert\left[-h^2 \textstyle{\DDZ},\chi \right] u^\sharp_i(h)\right\rVert_{L^2(0,\infty)} \leq e^{-D_1/h}
\]
by Corollary \ref{cor:sobolev}. Since the $u_n(h)$ can be normalized the first claim follows. As for the second claim, simply write $u_n(h) = u^\sharp_n(h) + (\chi-1)u^\sharp_n(h)$ where of course we mean the extension of $u^\sharp_n(h)$ by zero outside $\Omega$. Since $\|(\chi-1)u^\sharp_n(h)\|_{L^2(0,\infty)}=O(e^{-D_2/h})$ by shrinking the support of $\chi$ if necessary, we see that $\left\langle u_i(h), u_j(h) \right\rangle = O(e^{-D_2/h})$ for $i\neq j$.
\end{proof}

\section{Existence of resonances}

\subsection{Black box model}
To define the resonances of $P(h)$, we first give a formulation in terms of \emph{black box scattering}. It is important to note that all of the results in this section were first obtained for elliptic operators with coefficients that are dilation analytic at infinity \cite{Sjostrand:1991}, \cite{Sjostrand:1996}, \cite{Tang:1998}, and are all applicable to the problem at hand. The presentation we give here is an alternative based on exponential decay of the potential rather than analyticity \cite{Gannot1}. 

This framework is useful based on the following observation: outside any ball containing the origin, it is $V$ that is exponentially decaying, not $W$ in general. If $W$ \emph{was} exponentially decaying, we could view $L_\nu(h)$ as the ``free'' operator and write $P(h) = L_\nu(h) + W$. The (weighted) resolvent of $L_\nu(h)$ has an explicit integral kernel and continues analytically to a strip in the lower half-plane with favorable norm estimates. It would then be standard to meromorphically continue the (weighted) resolvent of $P(h)$ in terms of the resolvent of $L_\nu(h)$, see for example \cite{Simon:2000}. Since this is not the case, the black box model we now present allows us to circumvent this issue.

Let $Y$ denote either $Y=\mathbb{R}^{n}$ or $Y=(0,\infty)$ and suppose $\mathcal{H}$ is a Hilbert space with an orthogonal decomposition $\mathcal{H} = \mathcal{H}_{R_0} \oplus L^2(Y \backslash B(0,R_0))$ where $B(0,R_0) = \left\{y\in Y: \, |y| < R_0\right\}$. The orthogonal projections onto $\mathcal{H}_{R_0}$ and $L^2(Y\backslash B(0,R_0))$ will be denoted $1_{B(0,R_0)}u = u|_{B(0,R_0)}$ and $1_{Y\backslash B(0,R)}u = u|_{Y\backslash B(0,R_0)}$ for $u\in \mathcal{H}$. 

Suppose $P(h)$ is an unbounded self-adjoint operator on a domain $\DOM \subset \mathcal{H}$. We say that $P(h)$ satisfies the black box hypotheses if the following hold:
   
\begin{enumerate} \itemsep8pt
	\item $1_{Y\backslash B(0,R_0)} \DOM = H^2_h(Y\backslash B(0,R_0))$, and conversely if $u\in \DOM$ vanishes near $B(0,R_0)$ then $u\in H^2_h(Y\backslash B(0,R_0)$
	\item $1_{B(0,R_0)} (P(h)+i)^{-1}: \mathcal{H} \rightarrow \mathcal{H}_{R_0}$ is compact.
	\item There exists a symmetric real-valued matrix and a real-valued function 
\[
a_{ij}(y;h) \in C_b^\infty(Y\backslash B(0,R_0)), \quad V(y;h) \in C_b^\infty(Y\backslash B(0,R_0))
\] 
with all derivatives uniformly bounded in $h$, so that
\[
(P(h)u)|_{Y\backslash B(0,R_0)} = (-h^2 \sum_{i,j} \partial_{i} a_{ij} \partial_{j} + V)(u|_{Y\backslash B(0,R_0)}), \, u\in \DOM.
\]
	\item The metric coefficients $(a_{ij})$ are uniformly elliptic.
	\item The perturbation decays exponentially to the Laplacian in the sense that there exists $\tau>0, \delta>0$ so that
\[
|a_{ij}(y;h)-\delta_{ij}| \leq C e^{-(2\tau+\delta) |y|}, \quad |V(y;h)| \leq C e^{-(2\tau+\delta) |y|},\, y\in Y\backslash B(0,R_0).
\]
\end{enumerate}
A parametrix construction and analytic Fredholm theory gives the meromorphic continuation: 
\begin{prop} [{\cite[Proposition 1.5]{Gannot1}}]
The resolvent $R(E;h) = (P(h) - E)^{-1}$, analytic in the upper half-plane, admits a meromorphic continuation across $(0,\infty)$ to the strip $\left\{\Re E > 0\right\} \cap \left \{\Im E > -\tau h \right\}$ as a bounded operator from $e^{-\tau |y|}\mathcal{H}$ to $e^{\tau |y|}\mathcal{H}$. 
\end{prop}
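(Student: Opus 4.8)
The plan is to follow the black box parametrix method of Sj\"ostrand--Zworski \cite{Sjostrand:1991}, substituting exponential decay for dilation analyticity as in \cite{Gannot1}. The idea is to splice together two explicit resolvents --- an interior one built from a confined reference operator and an exterior one built from the free Laplacian --- whose continuation across $(0,\infty)$ one controls by hand, and then to correct the splicing error by analytic Fredholm theory.

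First I would fix $R_1 > R_0$ and let $P^\sharp(h)$ denote a reference operator that agrees with $P(h)$ on $B(0,R_0)$ and is, outside, a confining modification: either $-h^2\Delta$ with a Dirichlet condition on $\{|y| = R_1\}$, or better $P(h)$ plus a complex absorbing potential supported in $\{|y|>R_0\}$ so that $(P^\sharp(h)-E)^{-1}$ is holomorphic and invertible in a neighbourhood of the target strip. In either case $1_{B(0,R_0)}(P^\sharp(h)+i)^{-1}$ is compact, by black box hypothesis (2) and the fact that $P^\sharp(h)$ agrees with $P(h)$ on $B(0,R_0)$. Separately, let $R_0(E;h)$ be the resolvent of the flat Laplacian $-h^2\Delta_Y$ on $Y$ (for $Y=(0,\infty)$, the half-line Laplacian with Dirichlet condition at $0$; note that the singular term $h^2(\nu^2-1/4)z^{-2}$ of $P(h)$ lives inside $B(0,R_0)$, so it does not enter here). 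Its Schwartz kernel is an explicit combination of the functions $e^{\pm i\sqrt{E}\,|y-y'|/h}$, entire in $\sqrt{E}$; reading it as a function of $E$ yields a holomorphic continuation from $\Im E>0$ across $(0,\infty)$ into $\{\Re E>0\}$, whose continued kernel obeys an off-diagonal bound $\lesssim e^{(|\Im\sqrt{E}|/h)(|y|+|y'|)}$. Hence $e^{-\tau|y|}R_0(E;h)e^{-\tau|y|}$ is bounded on $\mathcal{H}$ precisely when $|\Im\sqrt{E}|/h\le\tau$, which is what pins down the width of the strip --- one shrinks $\tau$ and stays in $\{\Re E>0\}$ so that $\sqrt{E}$ is well behaved, which is also why $E=0$ is excluded.

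Next, choose nested cutoffs $\chi_0\prec\chi_1\prec\chi_2$, each $\equiv1$ near $\overline{B(0,R_0)}$ and supported in $\{|y|<R_1\}$, and form the parametrix
\[
Q(E;h) = \chi_1\,(P^\sharp(h)-E)^{-1}\,\chi_0 \;+\; (1-\chi_0)\,R_0(E;h)\,(1-\chi_2).
\]
A direct computation gives $(P(h)-E)\,Q(E;h) = I + K(E;h)$, where $K(E;h)$ gathers (i) the commutators $[-h^2\Delta,\chi_j]$, which are first order with compactly supported coefficients, hence relatively compact, and (ii) the exterior error $\bigl(P(h)-(-h^2\Delta_Y)\bigr)(1-\chi_0)R_0(E;h)(1-\chi_2)$, whose coefficients decay like $e^{-(2\tau+\delta)|y|}$ by hypothesis (5). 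The crucial point is that after conjugating by $e^{\pm\tau|y|}$ this exterior error still carries a spare factor $e^{-\delta|y|}$ multiplying the conjugated free resolvent, which is bounded; together with the $h$-elliptic gain this makes $K(E;h)$ a compact, holomorphic family of operators on $\mathcal{H}$ throughout $\{\Re E>0\}\cap\{\Im E>-\tau h\}$ (with at worst the real poles coming from $\sigma(P^\sharp(h))$, which the absorbing-potential choice removes). For $\Im E$ large one has $\|K(E;h)\|<1$, so $I+K(E;h)$ is invertible there; by the analytic Fredholm theorem $(I+K(E;h))^{-1}$ continues to a meromorphic family of bounded operators on the strip, and
\[
R(E;h) = Q(E;h)\,(I+K(E;h))^{-1}
\]
furnishes the meromorphic continuation. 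It agrees with the genuine resolvent for $\Im E>0$ by construction and uniqueness of analytic continuation, inherits the weighted mapping property $e^{-\tau|y|}\mathcal{H}\to e^{\tau|y|}\mathcal{H}$ from $R_0(E;h)$, and has no poles on $(0,\infty)$ itself by self-adjointness of $P(h)$ and the limiting absorption principle (recall $\sigma(P(h))$ is purely absolutely continuous).

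I expect the main obstacle to be the weighted compactness of $K(E;h)$ in the absence of analyticity: one must show quantitatively that the exponential-decay margin $\delta$ in hypothesis (5) is exactly what is needed to conjugate the free resolvent by $e^{\pm\tau|y|}$ and still land in the compact operators, uniformly for $E$ in compact subsets of the strip. A secondary nuisance is the bookkeeping near $E=0$, where $\sqrt{E}$ degenerates --- this is why the continuation is claimed only for $\Re E>0$ --- and arranging the reference operator so that it contributes no spurious poles to the strip. Because hypothesis (5) is stated with the generous exponent $2\tau+\delta$ rather than $2\tau$, the necessary room is built in, and this margin is precisely the mechanism that replaces complex scaling in the non-analytic setting.
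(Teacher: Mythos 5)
The paper itself does not prove this proposition: it is quoted from \cite{Gannot1}, and the only indication given here is the sentence preceding it, ``a parametrix construction and analytic Fredholm theory gives the meromorphic continuation,'' with exponential decay of the perturbation (hypothesis (5)) playing the role that dilation analyticity plays in \cite{Sjostrand:1991}. Your proposal is exactly that strategy --- interior reference resolvent plus cut-off free resolvent, weighted conjugation by $e^{\pm\tau|y|}$ paid for by the $e^{-(2\tau+\delta)|y|}$ decay, analytic Fredholm theory --- so in approach you coincide with the argument the paper points to.

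One concrete repair is needed in your parametrix. With the cutoffs arranged as
\[
Q(E;h)=\chi_1\,(P^\sharp(h)-E)^{-1}\chi_0+(1-\chi_0)\,R_0(E;h)\,(1-\chi_2),\qquad \chi_0\prec\chi_1\prec\chi_2,
\]
the identity contributions of the two terms are $\chi_1\chi_0=\chi_0$ and $(1-\chi_0)(1-\chi_2)=1-\chi_2$, so
\[
(P(h)-E)\,Q(E;h)=I-(\chi_2-\chi_0)+K(E;h),
\]
and the multiplication operator $\chi_2-\chi_0$ is bounded but \emph{not} compact, so analytic Fredholm theory does not apply to the remainder as written. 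The standard fix is to make the right-hand cutoffs of the two pieces sum to the identity, e.g.
\[
Q(E;h)=\chi_2\,(P^\sharp(h)-E)^{-1}\chi_1+(1-\chi_0)\,R_0(E;h)\,(1-\chi_1),
\]
which yields $(P(h)-E)Q=\chi_1+(1-\chi_1)+K=I+K$ with $K$ consisting only of commutator terms (compact via hypothesis (2) and elliptic regularity) and the exponentially decaying exterior error you describe. With that adjustment, together with the minor bookkeeping relating $|\Im\sqrt{E}|$ to the stated strip $\{\Im E>-\tau h\}$ (a factor $2\Re\sqrt{E}$, harmless for $\Re E>0$ bounded away from $0$), your sketch is the intended proof.
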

Here we define $e^{\pm \tau |y|}\mathcal{H} = \mathcal{H}_{R_0} \oplus e^{\pm \tau |y|} L^2(Y\backslash B(0,R_0))$. The set of resonances of $P(h)$ in this strip will be denoted by $\mathrm{Res}\,P(h)$ and a typical element will be denoted by $r(h)$. Under these hypotheses, the existence of localized quasimodes implies the existence of resonances rapidly converging to the real axis. This follows from an a priori bound on the continued resolvent of $P(h)$ away from resonances: let $\Gamma = (a,b) + i (c, -(\tau - \epsilon)h)$ where $0<a<b$ and $c>0,\epsilon>0$. Then there exists some $p>0$ such that 
\begin{equation} \label{eq:resolventbound}
\| R(E;h) \| \leq \exp(Ah^{-p} \log(1/g(h))),\quad E\in \Gamma \setminus \bigcup_{r(h)\in \mathrm{Res}\,P(h)} B(r(h),g(h)).
\end{equation}
Here the operator norm is taken between the exponentially weighted spaces above. If there did not exist resonances close to the real axis, then by a version of the three-lines lemma (often referred to in this context as the ``semiclassical maximum principle'' \cite{Tang:1998}), we could interpolate this bound in the lower half-plane with the self-adjoint bound $\| R(E,h) \| \leq C |\Im E|^{-1}$ in the upper half-plane to deduce a polynomial bound on the resolvent on the real axis. But such a bound would contradict the existence of a sufficiently accurate quasimode. More precisely, in the case of an exponentially decaying potential, \cite[Theorem 3]{Stefanov:2005} continues to hold:

\begin{theo} \label{thm:existenceofres}
Let $P(h)$ satisfy the black box hypotheses. Let $0<a_0<a(h)<b(h)<b_0<\infty$. Assume there is an $h_0>0$ such that for $h\in(0,h_0)$ there exists $m(h)\in\left\{1,2,\ldots\right\},\ E^\sharp_n(h)\in \left[a(h),b(h)\right]$, and $u_n(h)\in\DOM$ with $\|u_n(h)\|=1$ for $1\leq n\leq m(h)$ such that $\supp u_n(h) \subset K$ for a compact set $K$ independent of $h$. Suppose further that
\begin{enumerate} \itemsep8pt
	\item $\|(P(h)-E^\sharp_n(h))u_n(h)\| \leq R(h)$,
	\item Whenever a collection $\left\{v_n(h)\right\}_{n=1}^{m(h)}\subset \mathcal{H}$ satisfies $\|u_n(h)-v_n(h)\| < h^N/M$, then $\left\{v_n(h)\right\}_{n=1}^{m(h)}$ are linearly independent,
\end{enumerate}
	where $R(h) \leq h^{p+N+1}/C\log(1/h)$ and $C\gg 1,\, N\geq 0,\, M>0$. Then there exists $C_0>0$ depending on $a_0,b_0$ and the operator $P(h)$ such that for $B>0$ there exists $h_1<h_0$ depending on $A,B,M,N$ so that the following holds: Whenever $h\in (0,h_1)$, the operator $P(h)$ has at least $m(h)$ resonances in the strip
\[
	\left[a(h)-c(h)\log\frac1h, b(h)+c(h)\log\frac1h\right] - i\left[0,c(h)\right]
\]
	where $c(h) = \max(C_0BMR(h)h^{-p-N-1}, e^{-B/h})$.
\end{theo}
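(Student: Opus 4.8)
The plan is to follow the ``quasimodes to resonances'' argument of Tang--Zworski \cite{Tang:1998} and Stefanov \cite[Theorem 3]{Stefanov:2005}. In those works the dilation-analyticity of the perturbation is used only to produce two facts about the cut-off resolvent $\chi(P(h)-E)^{-1}\chi$, where $\chi\in C_c^\infty(Y)$ is chosen equal to $1$ on a neighbourhood of $K\cup\overline{B(0,R_0)}$: first, that it continues meromorphically into the strip $\{\Re E>0\}\cap\{\Im E>-\tau h\}$ with finite-rank polar parts at the resonances; and second, the a priori bound \eqref{eq:resolventbound} for the continued resolvent off the $g(h)$-neighbourhoods of the resonances. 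Under the black-box hypotheses both are available here: the first is the meromorphic continuation property recorded above (from \cite{Gannot1}), and the second follows from it together with the black-box counting estimate and Cartan's lemma, exactly as in the analytic case. So the proof of \cite{Stefanov:2005} goes through with only cosmetic changes, and I only indicate the structure.

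When $m(h)=1$ the argument is the one already sketched in the text: assuming $P(h)$ has no resonance in the strip, the semiclassical maximum principle of \cite{Tang:1998} interpolates \eqref{eq:resolventbound} in the lower half-plane against the self-adjoint bound $\|(P(h)-E)^{-1}\|\le|\Im E|^{-1}$ above to give a polynomial-in-$h^{-1}$ bound on $\chi(P(h)-E)^{-1}\chi$ on the real interval; but feeding the quasimode into the identity $u_1(h)=\chi(P(h)-E)^{-1}\chi\,(P(h)-E^\sharp_1(h))u_1(h)$ at $E=E^\sharp_1(h)$ (legitimate since $u_1(h)$ and $(P(h)-E^\sharp_1(h))u_1(h)$ are supported in $K$ and $E^\sharp_1(h)$ is not a resonance) forces $\|\chi(P(h)-E^\sharp_1(h))^{-1}\chi\|\ge 1/R(h)$, incompatible with a polynomial bound once $R(h)\le h^{p+N+1}/(C\log\frac1h)$. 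Tracking the resonance-excision and interpolation losses pins the width of the strip to $c(h)$.

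For general $m(h)$ one counts with multiplicity via a Grushin reduction. Set $R_-(h)c=\sum_n c_n u_n(h)$ and $R_+(h)=R_-(h)^*$; hypothesis (2) is equivalent to $\|R_-(h)c\|\ge (h^N/M)\|c\|$ for all $c$ (otherwise perturbing $u_n\mapsto u_n-c_nR_-(h)c/\|c\|^2$ destroys linear independence), so $R_+(h)R_-(h)$ is invertible. Forming the Grushin problem $\mathcal P(E)=\left(\begin{smallmatrix}\chi(P(h)-E)\chi & R_-(h)\\ R_+(h)&0\end{smallmatrix}\right)$, one checks as in \cite{Stefanov:2005} that its effective Hamiltonian $E_{-+}(E)$ is an $m(h)\times m(h)$ holomorphic matrix whose determinant vanishes exactly at the resonances near $[a(h),b(h)]$ with the correct multiplicities, that the quasimode bounds (1) give a singular value of $E_{-+}(E^\sharp_n(h))$ of size $\le CR(h)$, and that the maximum-principle bound of the previous paragraph translates into $\|E_{-+}(E)^{\pm1}\|\le\Theta(h)$ away from the resonances, with $\Theta(h)$ polynomial in $h^{-1}$ (or, if $g(h)$ is shrunk further, as large as $e^{B/h}$ for any fixed $B>0$). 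If $\det E_{-+}$ had fewer than $m(h)$ zeros in $\mathcal S(h)$ then some $E^\sharp_n(h)$ would lie at distance $>c(h)$ from all of them, so $\|E_{-+}(E^\sharp_n(h))^{-1}\|\le\Theta(h)$ by the maximum principle, contradicting $\|E_{-+}(E^\sharp_n(h))^{-1}\|\ge 1/(CR(h))$ once $\Theta(h)R(h)<1/C$; matching constants gives exactly $c(h)=\max(C_0BMR(h)h^{-p-N-1},e^{-B/h})$, the $e^{-B/h}$ term reflecting the trade-off between $g(h)$ and $\Theta(h)$. Hence $\det E_{-+}$ has at least $m(h)$ zeros in $\mathcal S(h)$, i.e. $P(h)$ has at least $m(h)$ resonances there.

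I expect the main obstacle to be the bookkeeping in the last step: tracking how the degree $p$ in \eqref{eq:resolventbound}, the exclusion radius $g(h)$, the height of the box used for the maximum principle, and the enlargement $c(h)\log\frac1h$ of the interval combine so that the interpolated bound $\Theta(h)$ is still beaten by $1/R(h)$, and so that this forces the stated form of $c(h)$. All of this is precisely what is carried out in \cite{Stefanov:2005}, so the only point that genuinely needs checking is that the a priori bound \eqref{eq:resolventbound} continues to hold for merely exponentially decaying (non-analytic) perturbations obeying the black-box hypotheses; this is established in \cite{Gannot1} and is what makes the argument applicable outside the analytic category.
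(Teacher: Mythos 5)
Your framing matches the paper's: Theorem \ref{thm:existenceofres} is not reproved there at all, but is quoted as \cite[Theorem 3]{Stefanov:2005}, the point being that the only places analyticity entered in \cite{Tang:1998}, \cite{Stefanov:2005} are the meromorphic continuation of the weighted resolvent and the a priori bound \eqref{eq:resolventbound} away from the resonances (itself derived from the reference-operator eigenvalue count \eqref{eq:eigenvaluegrowth}), and both survive for exponentially decaying black-box perturbations by \cite{Gannot1}. Your opening and closing paragraphs, and your $m(h)=1$ sketch via the semiclassical maximum principle, are exactly this argument, and your reformulation of hypothesis (2) as a quantitative lower bound $\lVert\sum_n c_n u_n(h)\rVert\gtrsim (h^N/M)\lVert c\rVert$ is the right way to use it.

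Where you deviate is the multiplicity count, and that step as written is a gap rather than a harmless variant. Stefanov does not use a Grushin reduction, so ``one checks as in \cite{Stefanov:2005}'' is a misattribution; his mechanism is: assuming fewer than $m(h)$ resonances in the box, the polar parts of the continued cutoff resolvent there span a space of rank $<m(h)$, so by a linear-algebra lemma combined with hypothesis (2) one finds a normalized combination $u=\sum_n c_n u_n(h)$ of the quasimodes annihilated by those residues and not too small; applying the interpolated (maximum-principle) bound to $E\mapsto \chi R(E;h)\chi\,u$, which is now holomorphic in the box, contradicts the quasimode estimate (1), and tracking the excision radius $g(h)$, the exponent $p$, and the interpolation height yields the stated $c(h)=\max(C_0BMR(h)h^{-p-N-1},e^{-B/h})$. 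Your Grushin version would need substantial extra work that you do not supply: the upper-left entry $\chi(P(h)-E)\chi$ is not a Fredholm realization of the meromorphically continued operator (in this non-analytic setting there is no scaled operator available, so identifying zeros of $\det E_{-+}$ with resonances, with multiplicity, is not routine), and the well-posedness of the $2\times 2$ problem with $R_{\pm}$ manufactured from quasimodes rather than resonant states is not established. None of this is needed: once \eqref{eq:resolventbound} is in hand, Stefanov's residue-rank argument goes through verbatim, which is precisely how the paper intends the theorem to be read.
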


To prove \eqref{eq:resolventbound}, we construct an associated reference operator $P^\sharp(h)$ with discrete spectrum such that $(P(h)-E) \chi = (P^\sharp(h) -E) \chi$ where $\chi \equiv 1$ near $B(0,R_0)$. We then add as an additional hypothesis that the number of eigenvalues in each interval $[-L,L]$ with $L\geq 1$ satisfies 
\begin{equation} \label{eq:eigenvaluegrowth}
N(P^\sharp(h),[-L,L])\leq C(L/h^2)^{n^\sharp/2}.
\end{equation}
This allows us to estimate the singular values of $(P(h) -E)^{-1}\chi$ in terms of \eqref{eq:eigenvaluegrowth}, which is the main ingredient in the proof of \eqref{eq:resolventbound}; in fact, the number $p>0$ appearing in the exponential bound above is related to $n^\sharp$. For our purposes, we can construct $P^\sharp(h)$ by restricting $P(h)$ to a ball $B(0,R_1)$ and imposing a Dirichlet condition on $\partial B(0, R_1)$, where $R_1 \gg R_0$.

\subsection{Schwarzschild--AdS problem in the black box framework} We now apply the above formalism to our situation. As our Hilbert space we take 
\[
\mathcal{H} = L^2(0,\infty) = L^2(0,R_0) \oplus L^2(R_0,\infty)
\]
for some $R_0 \ll z_{\mathrm{max},0}$. Our operator will be $P(h)$ on $\DOM$ and we may take $P^\sharp(h)$ on $\DOM^\sharp$ as our reference operator. However, we do need to verify that the eigenvalues of $P^\sharp(h)$ satisfy \eqref{eq:eigenvaluegrowth}, in this case with $n^\sharp = 1$. 

\begin{prop} \label{prop:weyllaw}
There exists $h_0>0$ and $C>0$ such that for any $L\geq 1$ the number of eigenvalues of $P^\sharp(h)$ in $[-L,L]$ satisfies $N(P^\sharp(h),[-L,L]) < C(L^{1/2}/h)$ when $h\in (0,h_0)$.
\end{prop}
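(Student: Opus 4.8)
The plan is to bound $N(P^\sharp(h),[-L,L])$ by comparison with the Bessel operator $L^\sharp_\nu(h)$, whose eigenvalue counting function is explicit. First I would recall from Lemma \ref{lem:W(h)} that $P^\sharp(h) \geq L^\sharp_\nu(h)$ for $h$ small, so by the min-max principle the $n$-th eigenvalue satisfies $E^\sharp_n(h) \geq \lambda_n(h)$, where $\lambda_n(h)$ are the eigenvalues of $L^\sharp_\nu(h)$ on $\Omega = (0,z_\mathrm{max}]$ with the Dirichlet-like condition at the origin and a Dirichlet condition at $z_\mathrm{max}$. Consequently $N(P^\sharp(h),[-L,L]) = N(P^\sharp(h),(-\infty,L]) \leq N(L^\sharp_\nu(h),(-\infty,L])$, so it suffices to prove the bound $N(L^\sharp_\nu(h),(-\infty,L]) < C(L^{1/2}/h)$.

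For the Bessel operator the eigenvalues are known explicitly: the eigenfunctions on $(0,z_\mathrm{max}]$ vanishing appropriately at $0$ are $z^{1/2} J_\nu(\sqrt{E}\,z/h)$ (up to normalization), so the Dirichlet condition at $z_\mathrm{max}$ forces $\sqrt{E}\,z_\mathrm{max}/h = j_{\nu,k}$, the $k$-th positive zero of the Bessel function $J_\nu$. Hence the eigenvalues are $\lambda_k(h) = h^2 j_{\nu,k}^2 / z_\mathrm{max}^2$. The number of these not exceeding $L$ is the number of $k$ with $j_{\nu,k} \leq z_\mathrm{max}\sqrt{L}/h$, and since the zeros of $J_\nu$ satisfy $j_{\nu,k} \geq k\pi + O_\nu(1) \geq ck$ for a constant $c = c(\nu)>0$ (indeed $j_{\nu,k} \sim k\pi$ as $k\to\infty$, with an easy uniform lower bound of the form $j_{\nu,k} \geq c(k+1)$), we get $N(L^\sharp_\nu(h),(-\infty,L]) \leq C z_\mathrm{max} L^{1/2}/h$. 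Absorbing $z_\mathrm{max}$ into the constant gives the claimed bound. Alternatively, one can cite the standard fact (e.g.\ \cite{Everitt:2007}) that $\sigma(L^\sharp_\nu(h))$ consists of such Bessel zeros, or argue by a Dirichlet--Neumann bracketing / Weyl-law estimate for the operator $-h^2\frac{d^2}{dz^2}$ plus the repulsive (or mildly attractive, handled by Hardy) centrifugal term, which has phase-space volume $\asymp L^{1/2}/h$.

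The only genuine subtlety — and the step I expect to require the most care — is the behavior near the singular endpoint $z=0$ when $0<\nu<1$, where the centrifugal term $h^2(\nu^2-1/4)z^{-2}$ may be negative (the weakly attractive case $0<\nu\leq 1/2$). One must be sure the Dirichlet-like boundary condition still yields a nonnegative operator with the Bessel spectrum and no spurious low eigenvalues accumulating; this is exactly what the Hardy inequality ($L^\sharp_\nu(h)\geq 0$) and the Frobenius/indicial-root analysis of Section 3.1 guarantee, so the singular endpoint causes no extra eigenvalues beyond those counted above. Once $L^\sharp_\nu(h) \geq 0$ and its spectrum is identified, the counting estimate is elementary. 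A minor point is to make the constant $C$ and threshold $h_0$ uniform in $L \geq 1$, which is immediate since the bound $j_{\nu,k}\geq ck$ is uniform in $k$ and the comparison $P^\sharp(h)\geq L^\sharp_\nu(h)$ holds for all $h\in(0,h_0)$ independently of $L$.
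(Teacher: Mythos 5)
Your proposal is correct and follows essentially the same route as the paper: it invokes Lemma \ref{lem:W(h)} to get $P^\sharp(h)\geq L^\sharp_\nu(h)$, applies min-max to reduce to counting eigenvalues of the Bessel operator, identifies those eigenvalues as $(h/z_\mathrm{max})^2 j_{\nu,k}^2$ via the Dirichlet condition at $z_\mathrm{max}$, and uses $j_{\nu,k}\gtrsim k$ to obtain the bound $C L^{1/2}/h$. The paper's proof is the same argument, with the zeros' asymptotics $j_{\nu,n}=(n+\tfrac{\nu}{2}-\tfrac14)\pi+O(n^{-1})$ quoted explicitly.
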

\begin{proof}
By Lemma \ref{lem:W(h)} we have $P^\sharp(h) \geq L^\sharp_\nu(h)$ and hence by the max-min principle, $N(P^\sharp(h),[-L,L]) \leq N(L^\sharp_\nu(h),[-L,L])$. The eigenvalue problem for $L^\sharp_\nu(h)$ is
\[
-h^2u''(z) + h^2\textstyle{\frac{\nu^2-\frac14}{z^2}}u(z) = k u(z), \quad \lim_{z\rightarrow 0} z^{\nu-1/2}u(z)=0,\quad u(z_{\mathrm{max},0})=0.
\]
The eigenvalues of $L^\sharp_\nu(h)$ are given by $k_n = \left(\frac{h}{z_{\mathrm{max},0}}\right)^2j_{\nu,n}^2$ where $j_{\nu,n}$ are the zeros of the first Bessel function $J_\nu$. The $j_{\nu,n}$ satisfy
\[
j_{\nu,n} = \left(n+\frac12\nu-\frac14\right)\pi + O(n^{-1})
\]
as $n\rightarrow\infty$. It follows that $N(L^\sharp_\nu(h),[-L,L]) = h^{-1}\left(\pi \sqrt{z_{\mathrm{max},0} L} + O(h)\right)$. The result thus follows with $C$ any constant larger than $\pi\sqrt{z_{\mathrm{max},0}}$. 
\end{proof}

\begin{prop} \label{prop:relativecompactness} The Schwarzschild--AdS problem satisfies the black box hypotheses.
\end{prop}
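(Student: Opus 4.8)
The plan is to verify the five black box hypotheses listed in Section 4.1 directly, taking $Y = (0,\infty)$, $n^\sharp = 1$, and choosing the radius $R_0$ so that $0 < R_0 \ll z_{\mathrm{max}}$; the exponential decay rate $\tau$ will be taken to be any positive number strictly less than $\gamma/2$, where $\gamma = f'(r_+)$ is the constant from Lemma \ref{lem:rasymptotics}. Most of the verification is bookkeeping once the correct objects are identified: the Hilbert space decomposition is $\mathcal{H} = L^2(0,R_0) \oplus L^2(R_0,\infty)$, the metric coefficient $a_{11}$ is identically $1$ (since in the Regge--Wheeler coordinate the operator is already $-h^2\,d^2/dz^2 + V$), and the potential in the black box sense is the full effective potential $V(z;h)$. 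Hypothesis (4), uniform ellipticity, is then immediate because $a_{11}\equiv 1$; hypothesis (3) holds by the very form of $P(h)$ away from the origin; and hypothesis (1) is the statement that on $(R_0,\infty)$ the domain $\DOM$ agrees with $H^2_h(R_0,\infty)$, which follows from the definition of $D_{\max}(0,\infty)$ together with the boundedness of $W$ (so that $-h^2 u'' + Vu \in L^2$ near $z = R_0$ is equivalent to $u \in H^2$ there) — the singular endpoint plays no role away from $z = 0$.

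The two substantive points are hypotheses (2) and (5). For hypothesis (5), I would invoke Lemma \ref{prop:Vinfinity}: it gives $\frac{d^k}{dz^k} V(z;h) = O(e^{-\gamma z})$ uniformly in $h$ as $z \to \infty$, so $|V(z;h)| \le C e^{-\gamma z} \le C e^{-(2\tau + \delta)z}$ on $(R_0,\infty)$ for any $\tau, \delta > 0$ with $2\tau + \delta < \gamma$; since $a_{11} - 1 \equiv 0$, the condition on $|a_{ij} - \delta_{ij}|$ is trivial. (This is exactly the place where the exponential decay of the full potential $V$ — rather than of the scattering perturbation $W$ — is used, which is why we work with $P(h)$ directly rather than splitting off $L_\nu(h)$; this was flagged in the discussion preceding the proposition.) For hypothesis (2), compactness of $1_{B(0,R_0)}(P(h)+i)^{-1}\colon \mathcal{H} \to \mathcal{H}_{R_0} = L^2(0,R_0)$, the cleanest route is to compare with the Bessel operator: by Lemma \ref{lem:weightedhardy} (or simply the Hardy inequality) $P(h)$ is bounded below, so $(P(h)+i)^{-1}$ maps $\mathcal{H}$ into $\DOM$ with a graph-norm bound; restricting to $(0,R_0)$ and using that elements of $\DOM$ restricted to a left-neighborhood of the origin lie in the form domain of $L_\nu(h)$ (Corollary \ref{cor:domainstructure}: $z^{-1}u \in L^2$ and $u' \in L^2$ near $0$), one gets that the image of the unit ball lands in a fixed bounded subset of $H^1_0$-type functions with $z^{-1}u \in L^2$ near the origin and $u' \in L^2$ on $(0,R_0)$, hence in a compactly embedded subspace of $L^2(0,R_0)$. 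Equivalently, one notes that $P^\sharp(h)$ already has compact resolvent (this was proved in Section 3.1 via relative compactness with respect to $L^\sharp_\nu(h)$), and that near $B(0,R_0)$ the operators $P(h)$ and $P^\sharp(h)$ agree, so $1_{B(0,R_0)}(P(h)+i)^{-1}$ differs from $1_{B(0,R_0)}(P^\sharp(h)+i)^{-1}$ only by a term involving a cutoff supported away from the origin, which is manifestly smoothing.

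Finally, one records that the Weyl-type bound \eqref{eq:eigenvaluegrowth} required to run the a priori resolvent estimate \eqref{eq:resolventbound} is exactly the content of Proposition \ref{prop:weyllaw} with $n^\sharp = 1$: $N(P^\sharp(h),[-L,L]) \le C L^{1/2}/h = C(L/h^2)^{n^\sharp/2}$. I expect the main obstacle to be purely expository rather than mathematical — namely packaging the compactness in hypothesis (2) cleanly given the singular endpoint at $z = 0$. The honest way to dispatch it is via the relative-compactness comparison with $L_\nu(h)$ (respectively $L^\sharp_\nu(h)$), using the structure of the maximal domain from Corollary \ref{cor:domainstructure}; this is precisely the argument already alluded to in the proof of the proposition asserting $\sigma_{\mathrm{ess}}(P(h)) = [0,\infty)$, and it is worth writing out the Bessel-operator comparison once and for all here, since both that earlier proposition and this one depend on it.
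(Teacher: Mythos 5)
Your plan is sound, and for the one substantive hypothesis --- (2), compactness of $1_{B(0,R_0)}(P(h)+i)^{-1}$ --- you take a genuinely different route from the paper. The paper writes $P(h)=L_\nu(h)+W$, applies the resolvent identity, and uses the criterion that $f(x)g(\sqrt{L_\nu(h)})$ is Hilbert--Schmidt when $f,g\in L^2(0,\infty)$ (Simon's Proposition 2.7, resting on the Hankel transform diagonalizing $L_\nu(h)$, with Everitt--Kalf covering $0<\nu<1$ for this boundary condition); this exploits the fact that $W\in L^2(0,\infty)$ (inverse-square or better decay) and, as a by-product, yields the relative compactness of $W$ with respect to $L_\nu(h)$ that the earlier proposition on $\sigma(P(h))=[0,\infty)$ explicitly cites. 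Your two alternatives --- (a) graph-norm bound plus Rellich on the bounded interval $(0,R_0)$, and (b) gluing with the reference operator $P^\sharp(h)$, whose compact resolvent is established independently via $L^\sharp_\nu(h)$ --- both work and are more elementary, avoiding the Hankel transform entirely; (b) is the standard black-box argument. Two points need firming up, though. In (a), Corollary \ref{cor:domainstructure} is purely qualitative (it gives $u'\in L^2$ near $0$ for each fixed $u\in\DOM$), so to say the image of the unit ball is bounded in $H^1(0,R_0)$ you must prove the quantitative estimate $\lVert u'\rVert_{L^2(0,R_0)}\leq C\left(\lVert u\rVert+\lVert P(h)u\rVert\right)$, which for $0<\nu<1/2$ requires absorbing the negative inverse-square part of the potential by a Hardy inequality in the spirit of Lemma \ref{lem:weightedhardy}. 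In (b), ``manifestly smoothing'' should be replaced by the identity $\chi(P(h)+i)^{-1}=(P^\sharp(h)+i)^{-1}\chi+(P^\sharp(h)+i)^{-1}\left[-h^2\DDZ,\chi\right](P(h)+i)^{-1}$ for a cutoff $\chi$ equal to $1$ on $(0,R_0]$ and supported in $[0,z_\mathrm{max})$, together with an interior elliptic estimate showing $\left[-h^2\DDZ,\chi\right](P(h)+i)^{-1}$ is bounded; both terms are then compact because they factor through $(P^\sharp(h)+i)^{-1}$. Finally, if your proof replaces the paper's, the cross-reference in the proposition identifying $\sigma(P(h))$ (which points to this proof for the relative compactness of $W$ with respect to $L_\nu(h)$) would need its own short argument, e.g.\ the same Hilbert--Schmidt or gluing computation applied to $W(L_\nu(h)+i)^{-1}$.
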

\begin{proof}
The only fact that needs checking is the compactness of $1_{B(0,R_0)} (P(h)+i)^{-1}$. We view $1_{B(0,R_0)}$ as multiplication by an indicator function on $\mathcal{H}$ and hence interpret $1_{B(0,R_0)} (P(h)+i)^{-1}$ as a bounded operator on $\mathcal{H}$. We use the following fact: any operator on $L^2(0,\infty)$ of the form $f(x)g(\sqrt{L_\nu(h)})$, where $f,g\in L^2(0,\infty)$, is Hilbert--Schmidt, see \cite[Proposition 2.7] {Simon:2000}. The proof relies on the fact that the Hankel transform gives an eigenfunction expansion for $L_\nu(h)$; this fact is classical for $\nu \geq 1$, while for the case $0< \nu < 1$ (and for our choice of boundary condition at $z=0$) we refer to \cite{Everitt:2007}.  Let $g=(y^2 + i)^{-1}$ so that $(L_\nu(h)+i)^{-1} = g(\sqrt{L_\nu(h)})$ and $g\in L^2(0,\infty)$. Then
\begin{multline*}
1_{B(0,R_0)} (L_\nu(h)+W+i)^{-1} = 1_{B(0,R_0)} (L_\nu(h)+i)^{-1} \\
- 1_{B(0,R_0)} (L_\nu(h)+W+i)^{-1} W (L_\nu(h)+i)^{-1}.
\end{multline*}
Both summands on the right hand side are Hilbert--Schmidt first by choosing $f=1_{B(0,R_0)}$ and then $f=W$.
\end{proof}

We finally come to our theorem on the existence of resonances with exponentially small imaginary parts.

\begin{theo} \label{thm:resonances}
Assume the hypotheses and notations of Theorem \ref{thm:quasimodes}. There exists $h_1>0$ and $D_0>0$ depending on $S$ such that for all $h\in (0,h_1)$ there is a one-to-one corresponce between $\sigma(P^\sharp(h))\cap [1,1+S]$ and $\mathrm{Res}\,P(h) \cap [1, 1+S+e^{-D_0/h}]-i[0,e^{-D_0/h}]$. Moreover, for each quasimode $E^\sharp_n(h)$ there is a corresponding resonance $r_n(h)$ with $|E^\sharp_n(h)-r_n(h)|\leq e^{-D_0/h}$. In particular there are exactly $m(h)$ resonances in $[1, 1+S+e^{-D_0/h}]-i[0,e^{-D_0/h}]$.
\end{theo}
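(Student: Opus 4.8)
The plan is to combine the ``quasimodes imply resonances'' machinery encoded in Theorem~\ref{thm:existenceofres} with a Fredholm-determinant counting argument against the reference operator $P^\sharp(h)$. The lower bound and the individual assignments come essentially for free: by Proposition~\ref{prop:relativecompactness} the black-box hypotheses hold, by Proposition~\ref{prop:weyllaw} the growth bound \eqref{eq:eigenvaluegrowth} holds with $n^\sharp=1$, and Theorem~\ref{thm:quasimodes} produces, for each $E^\sharp_n(h)\in(1,1+S)$, a normalized $u_n(h)\in\DOM$ supported in a fixed compact set with $\|(P(h)-E^\sharp_n(h))u_n(h)\|\le e^{-D_1/h}$ and Gram matrix within $e^{-D_2/h}$ of the identity. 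Thus hypotheses (1) and (2) of Theorem~\ref{thm:existenceofres} are met with $R(h)=e^{-D_1/h}$, $N=0$ and a fixed $M$; since $e^{-D_1/h}\le h^{p+1}/(C\log(1/h))$ for small $h$, the theorem applies, and choosing the free parameter $B$ slightly below $D_1$ makes $c(h)=e^{-B/h}$ and $c(h)\log(1/h)\le e^{-D_0/h}$ for any $D_0<B$. Running it on the whole family gives $\ge m(h)$ resonances in $[1,1+S+e^{-D_0/h}]-i[0,e^{-D_0/h}]$; running it on windows shrinking onto a single $E^\sharp_n(h)$ gives, for each $n$, a resonance $r_n(h)$ with $|r_n(h)-E^\sharp_n(h)|\le e^{-D_0/h}$.

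The content of the theorem then reduces to showing there are no further resonances in that sliver. For this I would use the holomorphic function $\det(I+\mathcal K(E,h))$ furnished by the parametrix construction of the meromorphic continuation (built from the reference operator $P^\sharp(h)$ on $(0,z_\mathrm{max}]$, as in the discussion after Theorem~\ref{thm:existenceofres}), whose zeros in $\{\Re E>0\}\cap\{\Im E>-\tau h\}$, counted with multiplicity, are exactly the resonances. Fix $S<S'<V_0(z_\mathrm{max})$ and group $\sigma(P^\sharp(h))\cap(1,1+S']$ into clusters at mutual distance $>4e^{-D_0/h}$. For a cluster $\mathcal C\subset(1,1+S)$, choose a contour $\gamma_{\mathcal C}$ at a fixed small distance from $\mathcal C$ enclosing no other cluster. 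On $\gamma_{\mathcal C}$ one compares $\det(I+\mathcal K(E,h))$ with the analogous holomorphic function $\det(I+\mathcal K^\sharp(E,h))$ whose zeros are the eigenvalues of $P^\sharp(h)$; the difference is controlled by the commutators $[{-h^2D_z^2},\chi]$ supported in the barrier $\Omega^+(1+S')$ applied to $(P^\sharp(h)-E)^{-1}$. Splitting this resolvent through the spectral projections of $P^\sharp(h)$ at the threshold $1+S'$ — above which $\|(P^\sharp(h)-E)^{-1}\|=O(1)$ for $E$ in the sliver, and below which the eigenfunctions are $O(e^{-\epsilon_0/h})$ across the barrier by Proposition~\ref{thm:hexpdecay} — yields $|\det(I+\mathcal K(E,h))-\det(I+\mathcal K^\sharp(E,h))|\le e^{-\epsilon/h}$ on $\gamma_{\mathcal C}$, while $|\det(I+\mathcal K^\sharp(E,h))|\gtrsim e^{-\epsilon'/h}$ with $\epsilon'<\epsilon$ (using the Weyl bound \eqref{eq:eigenvaluegrowth} to control the product over the far-away eigenvalues). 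Rouch\'e's theorem then gives that the two functions have the same number of zeros inside $\gamma_{\mathcal C}$, namely $\#\mathcal C$.

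Summing over the clusters contained in $(1,1+S)$ bounds the number of resonances in the sliver by $m(h)$ (the endpoint $\Re E=1+S$ being harmless: after an arbitrarily small change of $S$ it lies at a definite distance from $\sigma(P^\sharp(h))$, and a boundary eigenvalue's resonance is captured by the $e^{-D_0/h}$ margin). Combined with the lower bound of the first step this forces the count to be exactly $m(h)$, and the cluster-by-cluster matching upgrades to the asserted one-to-one correspondence with $|E^\sharp_n(h)-r_n(h)|\le e^{-D_0/h}$; for the low-lying $E^\sharp_n(h)$, whose consecutive gaps are $\gtrsim h\gg e^{-D_0/h}$ by Proposition~\ref{thm:h1/2}, every cluster is a singleton and the pairing is canonical.

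The main obstacle is the estimate $|\det(I+\mathcal K)-\det(I+\mathcal K^\sharp)|\le e^{-\epsilon/h}$ on $\gamma_{\mathcal C}$. The resolvent $(P^\sharp(h)-E)^{-1}$ is as large as $e^{D_0/h}$ near its eigenvalues, so the only way to make the commutator contributions exponentially small is to exploit that the offending eigenfunctions all have energy below $1+S'<V_0(z_\mathrm{max})$ and hence, by the Agmon estimates of Section~\ref{sect:agmon}, are exponentially small precisely in the barrier where the gluing commutators live — this is what forces $D_0$ to be taken strictly below the Agmon decay rate, and it is the single point where the separation of the potential well from the horizon by a barrier is used quantitatively. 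The rest — uniform-in-$h$ trace-class bounds for the determinants in the exponentially weighted spaces, the lower bound on $|\det(I+\mathcal K^\sharp)|$ away from its zeros, and the simultaneous choice of separating contours — is routine within the black-box framework and parallels \cite{Tang:1998}, \cite{Stefanov:2005}.
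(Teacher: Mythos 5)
The first half of your argument --- verifying the black-box hypotheses via Propositions \ref{prop:weyllaw} and \ref{prop:relativecompactness}, feeding the quasimodes of Theorem \ref{thm:quasimodes} into Theorem \ref{thm:existenceofres}, and grouping the $E^\sharp_n(h)$ into clusters separated by $4e^{-D_0/h}$ to get at least $m(h)$ resonances, one exponentially close to each $E^\sharp_n(h)$ --- is essentially the paper's proof of the existence direction. Where you diverge is the converse (no extra resonances, hence a bijection): the paper disposes of this by the argument of \cite[Lemmas 4.5, 4.6]{Nakamura:2002}, showing that any resonant state with resonance in the sliver is itself exponentially small in the barrier, so it can be truncated to an approximate eigenfunction of $P^\sharp(h)$; thus every resonance in the box lies within $e^{-D_0/h}$ of $\sigma(P^\sharp(h))$, and the cluster-by-cluster count closes the correspondence. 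No determinants, no contours, and no information about eigenvalue gaps is needed.

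Your replacement of that step by a Rouch\'e comparison of $\det(I+\mathcal{K}(E,h))$ with $\det(I+\mathcal{K}^\sharp(E,h))$ has genuine gaps. First, the parametrix in the black-box construction is built from $(P^\sharp(h)-E)^{-1}$ glued to a free resolvent, so $\mathcal{K}(E,h)$ is singular precisely at the eigenvalues $E^\sharp_n(h)$ you want to encircle; the function you propose is meromorphic, not holomorphic, near the relevant energies, and the identification of its zeros (with multiplicity) with $\mathrm{Res}\,P(h)$ is exactly the point that needs an argument, not an assumption. Second, the contour geometry is inconsistent: distinct clusters are only guaranteed to be $4e^{-D_0/h}$ apart, so a contour ``at a fixed small distance from $\mathcal{C}$ enclosing no other cluster'' need not exist; the contour is forced into gaps of size $\sim e^{-D_0/h}$, possibly within that distance of up to $O(h^{-1})$ eigenvalues. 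There the asserted lower bound $|\det(I+\mathcal{K}^\sharp)|\gtrsim e^{-\epsilon'/h}$ with $\epsilon'$ below the Agmon rate is unsupported: the Weyl bound \eqref{eq:eigenvaluegrowth} yields, off $g(h)$-neighborhoods of the zeros, only bounds of the shape $\exp(-Ch^{-1}\log(1/g(h)))=\exp(-C'D_0h^{-2})$ when $g(h)=e^{-D_0/h}$ (this is the same mechanism behind \eqref{eq:resolventbound}), and no Agmon-type difference estimate $e^{-\epsilon/h}$ can beat that. Repairing this would require a polynomial lower bound on the eigenvalue gaps of $P^\sharp(h)$ throughout $[1,1+S]$ (Proposition \ref{thm:h1/2} gives $\gtrsim h$ only for the low-lying levels $1+O(h)$, not up to the barrier top), which is proved nowhere in the paper and is not routine. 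Third, even the difference estimate needs care: $|\det(I+A)-\det(I+B)|$ carries a factor $\exp\bigl(C(1+\|A\|_{\mathrm{tr}}+\|B\|_{\mathrm{tr}})\bigr)$, and here the trace norms are $O(h^{-1})$, so ``exponentially small difference'' requires the Agmon constant to dominate these $e^{C/h}$ losses --- again not addressed. So as written the uniqueness/counting half does not go through; the resonant-state truncation argument of \cite{Nakamura:2002} is the efficient substitute and is what the paper invokes.
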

\begin{proof}
For the energy interval take $[a_0,b_0]=[1,1+S]$. Choose $C_0$ such that $c(h)\log\frac{1}{h} \leq e^{-C_0/h}$ in the notation of Theorem \ref{thm:existenceofres}. For each quasimode $E^\sharp_n(h)$ consider the boxes 
\begin{align*}
\Omega_n &= [E^\sharp_n(h)-2e^{-C_0/h}, E^\sharp_n(h)+2e^{-C_0/h}],\\ 
\Omega'_n &= [E^\sharp_n(h)-4e^{-C_0/h}, E^\sharp_n(h)+4e^{-C_0/h}]. 
\end{align*}
We now group together those $\Omega'_n$ which are not disjoint into $J(h)=O(h^{-1})$ clusters and let $[a_j(h),b_j(h)]$ denote the smallest connected interval containing the corresponding $\Omega_n$. Since $m(h) = O(h^{-1})$, the width of $[a_j(h)-e^{-C_0/h},b_j(h)+e^{-C_0/h}]$ is less than $Ch^{-1}e^{-C_0/h}$. Moreover the distance between any two boxes $[a_j(h),b_j(h)]$ and $[a_i(h),b_i(h)]$ is greater than $4e^{-C_0/h}$, which implies that the resonances in $[a_j(h)-c(h)\log\frac{1}{h},b_j(h)+c(h)\log\frac{1}{h}]$ and $[a_i(h)-c(h)\log\frac{1}{h},b_i(h)+c(h)\log\frac{1}{h}]$ are all disjoint. We now apply Theorem \ref{thm:existenceofres} to each box $[a_j(h),b_j(h)]$ to conclude that there are at least $m_j(h)$ resonances in $[a_j(h)-c(h)\log\frac{1}{h},b_j(h)+c(h)\log\frac{1}{h}]-i[0,c(h)]$, where $m_j(h)$ is the number of quasimodes in $[a_j(h),b_j(h)]$. Since the width of each box is exponentially small, we see that to quasimode $E^\sharp_n(h)$ we can associate a unique resonance $r_n(h)$ satisfying $|E^\sharp_n(h)-r_n(h)|\leq e^{-D_0/h}$ with a uniform constant $D_0$.

The converse follows as in the proof of \cite[Lemmas 4.5, 4.6]{Nakamura:2002} where it is shown that each resonant state is exponentially small inside the barrier and hence can be truncated to produce a quasimode.
\end{proof}

\subsection{Restoring the original parameters} \label{sect:originalparameters}

We now restate our results in terms of the angular momentum $\ell$ and the original spectral parameter $\omega$. The corresponding quasimodes and resonances will be denoted by
\begin{align*}
\omega^\sharp_{n,\ell} &= (\ell - 1 + d/2) E^\sharp_n\left(\left(\ell - 1 + d/2\right)^{-1}\right)^{1/2},\\
\omega_{n,\ell} &= (\ell - 1 + d/2) r_n\left(\left(\ell - 1 + d/2\right)^{-1}\right)^{1/2}.
\end{align*}
The asymptotic expansion for the low lying quasimodes (and hence for the real parts of the corresponding resonances) then takes the form

\begin{equation} \label{eq:ellexpansion}
\omega^\sharp_{n,\ell} = \ell + (2n+\nu +d/2) + c_{n,1} \ell^{-1/2} + c_{n,2} \ell^{-1} +\ldots.
\end{equation}

The two term approximation $\ell + (2n+\nu +d/2)$ was already proposed in \cite{Festuccia:2008}. In a recent work, Dias et al.\cite{Dias:2012} numerically analyzed the difference $\Re \omega_{n,\ell} - (\ell + 2n+\nu +d/2)$. By fitting to a power law, they found the difference behaves as $\ell^{-\frac{d-2}{2}}$.  In light of our asymptotic expansion, this at first seems surprising --- it implies that in dimension $d$, the process of taking a square root to pass from $E_n(h)$ to $\omega_{n,\ell}$ annihilates all the coefficients $c_{n,1},\ldots, c_{n,d-3}$. This seems more plausible when one takes into account how the asymptotic expansion is constructed: the first nonzero coefficient $E_{n,k}$ in the expansion occurs precisely at that first value of $k\geq 1$ so that that $Q_k$ is nonzero. In dimension $d$, this value of $k$ is not equal to $d$. However, viewing the equation in the original $r$-coordinate, we recall that
\[
V(r;h) = 1+h^2\left(\nu^2-\frac{1}{4}\right)r^2 + \frac{1}{r^2} -\frac{\mu}{r^d} + \textrm{lower order terms}.
\]
Since $z(r) \sim \frac{1}{r}$ as $z\rightarrow 0$, we see that to leading order $1+h^2(\nu^2-\frac{1}{4})r^2 + \frac{1}{r^2}$ corresponds to $1+h^2(\nu^2-\frac{1}{4})z^{-2} + z^2$ and hence $-\frac{\mu}{r^d}$ can be thought of as the first perturbative term. Roughly speaking, the first perturbative term \emph{is} of the size $\frac{1}{r^d} \sim z^d$ in the $r$-coordinate. The issue is that when passing to the Regge-Wheeler coordinate, $-\mu z^d$ is no longer the first perturbative term owing to lower order terms in the expansion $r(z) = \frac{1}{z} + \ldots$.  The question is then whether one can simply run the argument in the $r$-coordinate, but in that case we no longer have a well understood model operator like $\widetilde{P}(h)$.

Nevertheless, we can establish the following result for small dimensions:

\begin{prop} In dimensions $d=3,4$, the first nonvanishing coefficient in the expansion of $\Re \omega_{n,\ell}$ is $c_{n,d-2}$. When $d\geq 5$, both $c_{n,1}$ and $c_{n,2}$ vanish.
\end{prop}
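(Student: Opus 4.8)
The plan is to reduce the statement to explicit formulas for the two lowest Rayleigh--Schr\"odinger coefficients $E_{n,1},E_{n,2}$ of Proposition~\ref{thm:expansion}, together with the bookkeeping of the square root in $\omega^\sharp_{n,\ell}=p\,E^\sharp_n(p^{-1})^{1/2}$, where $p=\ell-1+d/2=h^{-1}$. Since $E^\sharp_n(h)=1+2(2n+1+\nu)h+E_{n,1}h^{3/2}+E_{n,2}h^{2}+\cdots$, expanding the square root gives
\[
E^\sharp_n(h)^{1/2}=1+(2n+1+\nu)h+\tfrac12 E_{n,1}h^{3/2}+\Bigl(\tfrac12 E_{n,2}-\tfrac12(2n+1+\nu)^{2}\Bigr)h^{2}+\cdots,
\]
so $\omega^\sharp_{n,\ell}=p+(2n+1+\nu)+\tfrac12 E_{n,1}p^{-1/2}+\bigl(\tfrac12 E_{n,2}-\tfrac12(2n+1+\nu)^{2}\bigr)p^{-1}+\cdots$; re-expanding $p^{-1/2}=\ell^{-1/2}+O(\ell^{-3/2})$ and $p^{-1}=\ell^{-1}+O(\ell^{-2})$ in powers of $\ell^{-1/2}$ one reads off
\[
c_{n,1}=\tfrac12 E_{n,1},\qquad c_{n,2}=\tfrac12 E_{n,2}-\tfrac12(2n+1+\nu)^{2}.
\]
The exponentially small correction $\epsilon_{n,\ell}$ of Theorem~\ref{thm:resonances} affects no coefficient, so it suffices to work with $\omega^\sharp_{n,\ell}$; the task is to compute $E_{n,1}$, and then $E_{n,2}$ in the dimensions where $E_{n,1}$ vanishes.

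I would next identify $Q_1$ and $Q_2$ by Laurent-expanding $V(h^{1/2}x;h)$ using Lemma~\ref{lem:rasymptotics}: from $r(z)=z^{-1}-z/3+O(z^{3})$ one gets $r(z)^{-2}=z^{2}+\tfrac23 z^{4}+O(z^{6})$ (only even powers, by parity), $r(z)^{-d}=z^{d}+O(z^{d+2})$, and $f(r(z))=z^{-2}+\tfrac13-\mu z^{d-2}+O(z^{2})$. Substituting into $V=V_{-1}+V_0+h^{2}V_1$, setting $z=h^{1/2}x$, and sorting by powers of $h^{1/2}$ (the kinetic term $-h^{2}\partial_z^{2}\mapsto-h\,\partial_x^{2}$ enters only $Q_0=-\partial_x^{2}+(\nu^{2}-\tfrac14)x^{-2}+x^{2}$), one finds that the only contribution to $Q_1$ is the $z^{3}$--coefficient of $V_0$, which is $-\mu$ if $d=3$ and $0$ if $d\ge 4$ (the $z^{3}$ terms of $V_{-1}$ or $h^2V_1$ carry an extra $h^{2}$ and feed higher $Q_k$'s), so
\[
Q_1=-\mu x^{3}\quad (d=3),\qquad Q_1=0\quad (d\ge 4);
\]
and for $d\ge 4$, collecting the $z^{4}$--coefficient of $V_0$ (namely $\tfrac23$ from $r^{-2}$, plus $-\mu$ from $-\mu r^{-d}$ only when $d=4$), the constant $\tfrac13(\nu^{2}-\tfrac14)$ from $V_{-1}$, and the constant $-\tfrac14$ from $h^{2}V_1$, one gets
\[
Q_2=\Bigl(\tfrac23-\mu\,\delta_{d,4}\Bigr)x^{4}+\tfrac{\nu^{2}-1}{3}.
\]

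For $d=3$ the first-order formula gives $E_{n,1}=\langle Q_1 v_{n,0},v_{n,0}\rangle=-\mu\int_0^{\infty}x^{3}v_{n,0}(x)^{2}\,dx\neq 0$, so $c_{n,1}=c_{n,d-2}\neq 0$. For $d\ge 4$ we have $Q_1=0$ and $E_{n,1}=0$, hence the term $\langle(Q_1-E_{n,1})v_{n,1},v_{n,0}\rangle$ in the second-order formula vanishes identically and $E_{n,2}=\langle Q_2 v_{n,0},v_{n,0}\rangle$. I would then compute the moments of $Q_0$ in the normalized eigenstate $v_{n,0}$, eigenvalue $2(2n+1+\nu)$, by a hypervirial identity $\langle[Q_0,B]\rangle_n=0$ (equivalently Hellmann--Feynman and scaling): with $B=x\,\partial_x+\partial_x x$ one gets the virial relation $\langle x^{2}\rangle_n=2n+1+\nu$, and with $B=x^{3}\partial_x+\partial_x x^{3}$ one gets
\[
\langle x^{4}\rangle_n=\tfrac12\bigl(3(2n+1+\nu)^{2}-\nu^{2}+1\bigr)
\]
(boundary terms at $x=0$ vanish because $v_{n,0}\sim x^{\nu+1/2}$). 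Hence $\tfrac23\langle x^{4}\rangle_n+\tfrac{\nu^{2}-1}{3}=(2n+1+\nu)^{2}$, so for $d\ge 5$ we obtain $E_{n,2}=(2n+1+\nu)^{2}$ and therefore $c_{n,2}=0$, while for $d=4$ we obtain $E_{n,2}=(2n+1+\nu)^{2}-\mu\langle x^{4}\rangle_n$ and therefore $c_{n,2}=-\tfrac\mu2\langle x^{4}\rangle_n\neq 0$; since $c_{n,1}=0$ as well when $d=4$, the first nonvanishing coefficient is $c_{n,2}=c_{n,d-2}$.

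The main obstacle is the moment computation, and in particular the exact cancellation $\tfrac23\langle x^{4}\rangle_n+\tfrac{\nu^{2}-1}{3}=(2n+1+\nu)^{2}$ that is precisely what kills $c_{n,2}$ when $d\ge 5$; it is the quantitative form of the heuristic that in the $r$-variable the genuine perturbation $-\mu r^{-d}$ first enters at order $z^{d}$, so nothing in $Q_1,Q_2$ ``sees'' it when $d\ge 5$. A subsidiary but necessary check is that no overlooked $z^{3}$ or $z^{4}$ term of $V$ (from $V_{-1}$ or from $h^{2}V_1$) feeds $Q_1$ or $Q_2$ when $d\ge 4$: the $-\mu z^{d-2}$ correction to $f(r(z))$ feeds $Q_k$ only for $k=d$, and the $\mu(d-1)^{2}r^{-d}f$ piece of $V_1$ likewise only for $k=d$, which is exactly why the sharp statements are limited to the two lowest coefficients and to $d=3,4$.
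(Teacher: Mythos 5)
Your argument is correct and, apart from one step, coincides with the proof in the paper: the square-root bookkeeping giving $c_{n,1}=\tfrac12E_{n,1}$ and $c_{n,2}=\tfrac12E_{n,2}-\tfrac18E_{n,0}^2$, the Laurent-expansion identification $Q_1=-\mu x^3$ for $d=3$, $Q_1=0$ and $Q_2=\tfrac{\nu^2-1}{3}+\bigl(\tfrac23-\mu\,\delta_{d,4}\bigr)x^4$ for $d\ge4$, and the reduction $E_{n,2}=\langle Q_2v_{n,0},v_{n,0}\rangle$ when $Q_1=0$ are exactly the steps taken there. Where you genuinely differ is the evaluation of the fourth moment: the paper quotes the Hall--Saad closed form for $\langle x^\alpha\rangle_n$ in the spiked-oscillator eigenbasis and simplifies the resulting Gamma-function sum to $\langle x^4\rangle_n=2+6n(1+n)+3\nu+6n\nu+\nu^2$, whereas you obtain $\langle x^2\rangle_n=2n+1+\nu$ and $\langle x^4\rangle_n=\tfrac12\bigl(3(2n+1+\nu)^2-\nu^2+1\bigr)$ from hypervirial identities with $B=x\partial_x+\partial_xx$ and $B=x^3\partial_x+\partial_xx^3$; the two values agree, and your cancellation $\tfrac23\langle x^4\rangle_n+\tfrac{\nu^2-1}{3}=(2n+1+\nu)^2$ is precisely the paper's identity $\tfrac{E_{n,2}}{2}=\tfrac{E_{n,0}^2}{8}$. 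Your route is more elementary (no Laguerre or Gamma-function identities) and exhibits the vanishing of $c_{n,2}$ for $d\ge5$ as a virial-type fact rather than an algebraic coincidence, while the explicit matrix-element formula used in the paper yields all moments $\langle x^\alpha\rangle_n$ at once, which is what one would need for coefficients beyond $c_{n,2}$; you do need to justify the hypervirial identities for the singular potential, which you correctly reduce to the vanishing of boundary terms at $x=0$ (from $v_{n,0}\sim x^{\nu+1/2}$, $\nu>0$, and finiteness of $\langle x^{-2}\rangle_n$) together with Gaussian decay at infinity. Two harmless slips in your intermediate expansions: for $d=3$ the coefficient of $z$ in $f(r(z))$ is $-\tfrac{\mu}{2}$ rather than $-\mu$, and $r(z)^{-2}$ is not exactly even in $z$ (the $\mu$-corrections break parity at order $z^{d+2}$); since these terms enter only through their order in $z$ and feed $Q_k$ with $k\ge3$, neither affects $Q_1$, $Q_2$, or the conclusion.
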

\begin{proof}
By explicitly calculating the Laurent expansion of $r(z)$, we have

\begin{enumerate}
	\item When $d=3$: $Q_1(x) = -\mu x^3$.
	\item When $d=4$: $Q_1(x) = 0,\, Q_2(x) = \frac{\nu^2-1}{3} + \left(\frac{2}{3}-\mu\right)x^4$.
	\item When $d \geq 5$: $Q_1(x) = 0,\, Q_2(x) = \frac{\nu^2-1}{3} + \frac{2}{3}x^4$.
\end{enumerate}

In these dimensions we are only concerned with the two coefficients $c_{n,1}, c_{n,2}$ and these are readily obtained from the $E_{n,k}$ by
\begin{align*}
c_{n,1} &= \frac{E_{n,1}}{2}, \\
c_{n,2} &= \left(\frac{E_{n,2}}{2}-\frac{E_{n,0}^2}{8}\right).
\end{align*}   

We also see that 

\begin{enumerate}
	\item When $d=3$: $E_{n,1} = \left <Q_1 \widetilde{u}_n(1),\widetilde{u}_n(1)\right >$. 
	\item When $d\geq4$: $E_{n,1} = 0,\, E_{n,2} = \left < Q_2 \widetilde{u}_n(1),\widetilde{u}_n(1)\right >$.
\end{enumerate}
            
The inner products are of course taken in $L^2(0,\infty)$. At this stage, we remark that when $d=3$ we clearly have $E_{n,1}\neq 0$ and hence $c_{n,1}\neq 0$; we say no more about this case. In the other dimensions we need to actually compute the matrix elements: a general expression can be found in \cite{Hall:2000},
\[
\left < x^\alpha \widetilde{u}_n(1),\widetilde{u}_n(1) \right > = \frac{\Gamma(n+1+\nu)}{n!\,\Gamma(1+\nu)^2}\sum_{r=0}^{n}\Gamma\left(\frac{\alpha}{2}+1+\nu+r\right)\frac{\left(-\frac{\alpha}{2}-r\right)_n}{(1+\nu)_n}\frac{(-n)_r}{(1+\nu)_r}\frac{1}{r!},
\]
where $(a)_r = \Gamma(a+r)/\Gamma(a)$ unless $a=-m$ is a negative integer, in which case $(-m)_r=(-m)(-m+1)\cdots(-m+r-1)$. When $\alpha = 4$, the quantity
$\left(-\frac{\alpha}{2}-r\right)_n = (-2-r)_n$ vanishes unless $r\geq n-2$. Using the definition of $(a)_r$ and $z\Gamma(z) = \Gamma(z+1)$ along with $\frac{(-m)_k}{k!} = (-1)^k \binom{m}{k}$, this sum reduces to
\[
\left < x^4 \widetilde{u}_n(1),\widetilde{u}_n(1) \right > = \sum_{k=n-2}^{n} (-1)^{n+k} \binom{k+2}{n}\binom{n}{k} (2+\nu+k)(1+\nu+k).
\]           
This sum is explicitly calculated as 
\[
\left < x^4 \widetilde{u}_n(1),\widetilde{u}_n(1) \right  > = 2 + 6 n (1 + n) + 3 \nu + 6 n \nu + \nu^2.
\]
Using the expression for $E_{n,2}$ in dimension $d\geq5$, we have
\[
E_{n,2} = 1 + 4n + 4n^2 + 2\nu + 4n\nu +\nu^2.
\]
But using $E_{n,0} = 2(2n+1+\nu)$, the relation $\frac{E_{n,2}}{2} = \frac{E_{n,0}^2}{8}$ holds exactly.

Hence when $d\geq5$ we have
\[
c_{n,1}=0, \quad c_{n,2}=0.
\]   
When $d=4$, we instead have 
\[
c_{n,1}=0,\quad c_{n,2} = -\frac{\mu}{2}\left(2 + 6 n (1 + n) + 3 \nu + 6 n \nu + \nu^2\right) \neq 0.
\]
\end{proof}

Note that we have now obtained the main theorem as stated in Section \ref{int}. 

\begin{table} [htb] 
\caption{Numerically computed real parts of quasinormal modes.} 
    \begin{tabular}{|l|l|l|l|l|l|}
        \hline
        $(\ell,n)$  & Asym. Exp.       & SLEIGN2       & WKB      & B--W method       \\ \hline
        $\ell = 3, n=0$ & 5.37639 & 5.91099 & 5.8668  & 5.8734  \\ 
        $\ell = 3, n=1$ & 6.45283 & 7.71884 & 7.6727  & 7.6776  \\ 
        $\ell = 3, n=2$ & 7.35226 & 9.47065 & 9.4189  & 9.4219  \\ 
        $\ell = 4, n=0$ & 6.46471 & 6.91806 & 6.8830  & 6.8889  \\ 
        $\ell = 4, n=1$ & 7.63913 & 8.75007 & 8.7139  & 8.7184  \\ 
        $\ell = 4, n=2$ & 8.63488 & 10.5348 & 10.4960 & 10.4996 \\ 
        $\ell = 5, n=0$ & 7.52937 & 8.00038 & 7.8945  & 7.8997  \\ 
        $\ell = 5, n=1$ & 8.78087 & 9.77257 & 9.7426  & 9.7466  \\ 
        $\ell = 5, n=2$ & 9.8549  & 11.5802 & 11.5482 & 11.5516 \\
        \hline
    \end{tabular} \label{table1}
\end{table}

\begin{figure} [htb] 
\includegraphics [width=\textwidth]{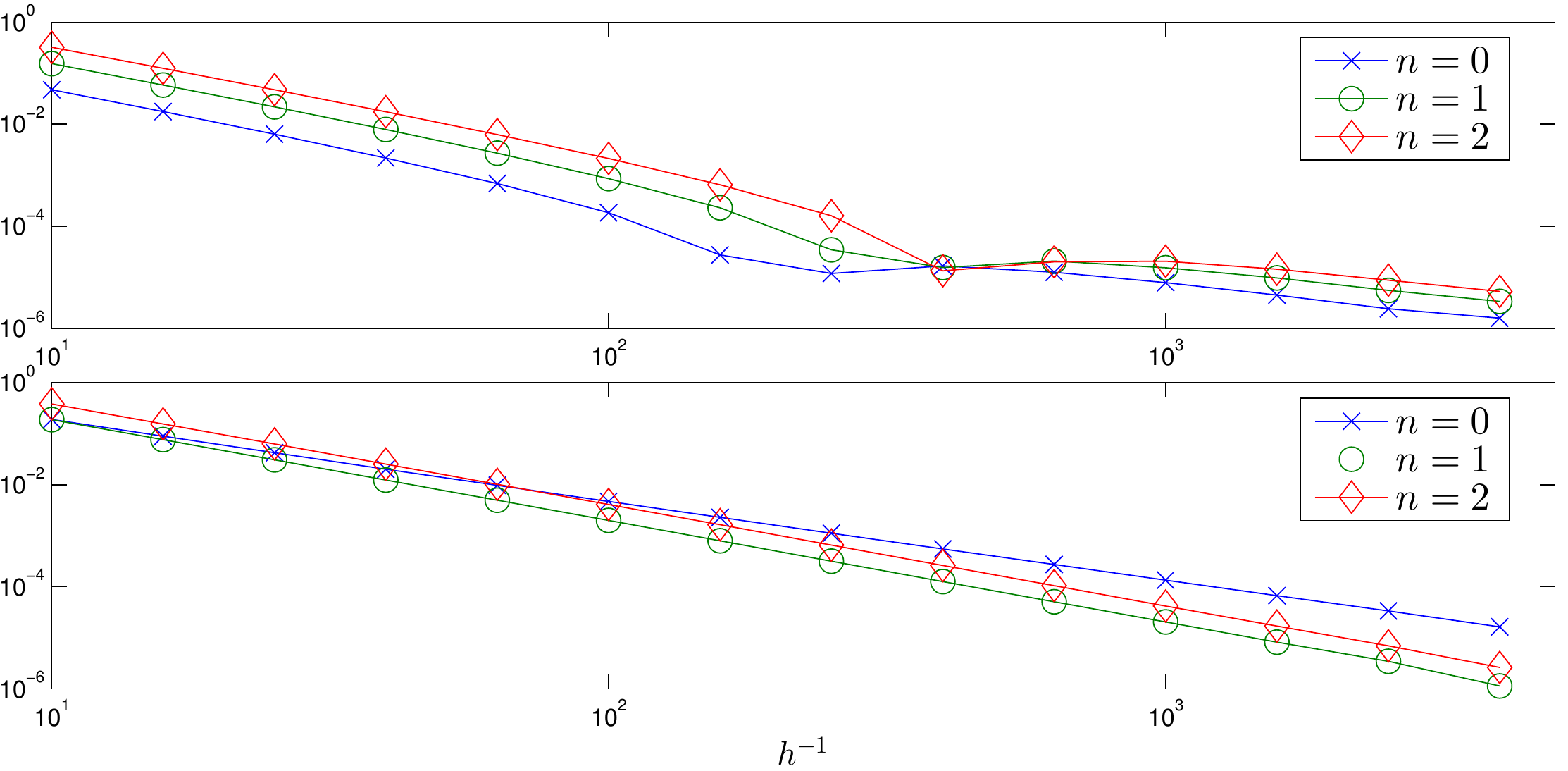}
\caption{A comparison between the asymptotic expansion for $E^\sharp_n(h)$ provided by Proposition \ref{thm:expansion} and $E^\sharp_n(h)$ as computed by SLEIGN2. Here the black hole parameters are $d=3, \mu = 1/10, \nu = 3/2$. Top: log-log plot of $h^{-1}$ against the difference between the SLEIGN2 value and the first two terms in the asymptotic expansion. Bottom: log-log plot of $h^{-1}$ against the difference between the SLEIGN2 value and the first three terms in the asymptotic expansion.} \label{f:4}
\end{figure}

\subsection{Numerical results} \label{numericalresults} In \cite{Festuccia:2008}, Festuccia and Liu derived a Bohr-Sommerfeld type quantization condition for resonances as $\ell\rightarrow\infty$ using WKB techniques. There have also been numerical studies in \cite{Berti:2009QNM} using what they term the ``Breit--Wigner resonance method.'' In Table \ref{table1} we compare our results with the two aforementioned results for the parameter values $d=3, \mu =1/10, \nu = 3/2$. The values in the table represent the real parts of resonances; the first column represents the three term expansion provided by Proposition \ref{thm:expansion}, namely
\[
\omega_n(h) \approx h^{-1}\left(1+2(2n+1+\nu)h+h^{3/2}E_{n,1}\right)^{1/2}.
\]
Here we computed the three terms, then took a square root, rather than using \eqref{eq:ellexpansion}.
In the second column we computed $\omega_n(h)$ using the program SLEIGN2 \cite{Bailey:2001}; this was done by considering the original equation in Sturm-Liouville form,
\[
-\DR\left(f\DR\psi\right)+ \left(\frac{(2\ell+d-2)^2-1}{4r^2} + \nu^2-\frac14 + \frac{\mu(d-1)^2}{4r^d} \right)\psi = \omega^2 f^{-1}\psi,
\]
and solving the eigenvalue problem on the interval $(r_{\mathrm{max}},\infty)$. The third column represents the Bohr-Sommerfeld approximation and the fourth column is the Breit--Wigner method; the latter two are taken from \cite{Berti:2009QNM}.

The real parts as computed by SLEIGN2 are in good agreement with the values in \cite{Berti:2009QNM}. Apart from the lowest mode, the asymptotic expansion did not reliably describe the real parts. However, this is only because $\ell$ is not large enough. In Figure~\ref{f:4} we compare the the real parts as computed by the expansion and SLEIGN2 for a larger range of values of $\ell$ and find the error behaves as predicted by Proposition~\ref{thm:expansion}.

\section*{Acknowledgements}
I would like to thank Maciej Zworski for suggesting the problem, a multitude of advice, and constant encouragement. I would also like to thank Jeffrey Galkowski and Semyon Dyatlov for valuable discussions and general helpfulness. Special thanks to Piotr Bizo\'n for pointing out the work of Festuccia--Liu \cite{Festuccia:2008}. I am grateful to UC Berkeley for support during Summer 2012 from the Graduate Division Summer Grant. This work was also partially supported by the NSF grant DMS-1201417. Finally, I am especially grateful to the two anonymous referees for their numerous comments and suggestions.

\end{document}